\newtheorem{rem}{Remark}[section]
\begin{document}
\graphicspath{{figures/},}
    \title{On a class of higher-order length preserving and energy decreasing IMEX schemes for the Landau-Lifshitz equation
\thanks{This work is supported by the National Natural Science Foundation of China  grants  12271302, 11971407 and 12131014, and by the Hong Kong Polytechnic University Postdoctoral Research Fund 1-W22P.}}
 \author{ Xiaoli Li
        \thanks{School of Mathematics, Shandong University, Jinan, Shandong, 250100, P.R. China. Email: xiaolimath@sdu.edu.cn}.
        \and Nan Zheng
         \thanks{Department of Applied Mathematics, The Hong Kong Polytechnic University, Hung Hom, Kowloon, Hong Kong. Email: znan2017@163.com}.
         \and Jie Shen
        \thanks{Corresponding Author. School of Mathematical Science, Eastern Institute of Technology, Ningbo, China. Email: jshen@eitech.edu.cn}.
}

  \maketitle

\begin{abstract}
 We construct new higher-order implicit-explicit (IMEX) schemes using the generalized scalar auxiliary variable (GSAV) approach for the Landau-Lifshitz equation. These schemes are linear, length preserving and only require solving one elliptic equation with constant coefficients at each time step. We show that numerical solutions of these schemes are uniformly bounded without any restriction on the time step size, and establish rigorous error estimates in $l^{\infty}(0,T;H^1(\Omega)) \bigcap l^{2}(0,T;H^2(\Omega))$ of orders 1 to 5 in a  unified framework. 
 \end{abstract}

 \begin{keywords}
Landau-Lifshitz equation; length preserving; higher-order;  stability; error estimates; time discretization
 \end{keywords}
   \begin{AMS}
35Q56; 65M12; 65M15
    \end{AMS}
  
 \section{Introduction}
 Magnetization dynamics in a ferromagnetic material occupying a region $\Omega$ is governed by the Landau-Lifshitz equation \cite{guo1993landau,landau1992theory}: 
   \begin{equation}\label{e_original model}
 \frac{\partial \textbf{m}}{\partial t} = -\beta \textbf{m} \times \Delta \textbf{m}
- \gamma \textbf{m} \times ( \textbf{m} \times \Delta \textbf{m} ), \quad  \ {\rm in} \ \Omega \times J, 
\end{equation}
with 
 \begin{equation}\label{e_initial condition}
  \textbf{m}(\textbf{x},0 ) = \textbf{m}_0(\textbf{x}) , \text{ with } |\textbf{m}_0(\textbf{x}) |=1, \ {\rm in}\ \Omega,
\end{equation} 
subject to either homogeneous Neumann or periodic boundary conditions.
In the above, $\textbf{m}=(m_1,m_2,m_3)^t$ describes the magnetization in continuum ferromagnets, $\Omega$ is an open bounded domain in $\mathbb{R}^d$ with $d \in \{ 1,2,3\}$,  
$J$ denotes $(0, T]$ for some $T >0$, 
 $\gamma>0$ is the Gilbert damping parameter and $\beta$ is an exchange parameter. When  $\beta \neq 0$, it is often referred to as the Landau-Lifshitz-Gilbert equation.    The solution of \eqref{e_original model} preserves pointwisely its magnitude, i.e.,
  \begin{equation}\label{e_length preserving}
\aligned
\frac{d}{dt} | \textbf{m}(\textbf{x},t)  |^2=0,
\endaligned
\end{equation} 
which, together with  \eqref{e_initial condition},   implies that $ | \textbf{m}(\textbf{x},t )|=1 \;\;\forall x,\,t$.
It also satisfies the following energy dissipation law:
   \begin{equation}\label{e_energy dissipative law}
\aligned
\frac{d E}{dt}  = - \gamma \| \textbf{m} \times \Delta \textbf{m} \|^2,\quad\text{with }\; E= \frac{1}{2} \int_{\Omega} |\nabla \textbf{m} |^2 d \textbf{x}.
\endaligned
\end{equation}

The  Landau-Lifshitz equation  is a  fundamental equation in physics, and its accurate and efficient numerical simulation plays an important role in  understanding both the statics and dynamics in ferromagnetic materials  \cite{kruzik2006recent,lakshmanan1984landau,cheng2022length}, and has  attracted much attention in the past  decades, \cite{weinan2001numerical,alouges2006convergence,gao2014optimal,an2021optimal,akrivis2021higher,gui2022convergence} and the references therein. 

Since $ | \textbf{m}(\textbf{x},t ) | =1$ holds under the initial condition $|\textbf{m}_0(\textbf{x}) |=1$, it is important to construct numerical schemes  to satisfy  these point-wise constraints at the discrete level.
 There are essentially  three different strategies to deal with this constraint. (i) Penalty method \cite{pistella1999numerical,prohl2001computational}: introducing a suitable penalty term  in the original equation to relax the point-wise constraint. This approach has been widely used in the numerical simulation for liquid crystal flows \cite{badia2011finite,liu2000approximation}. Main drawbacks of this approach are: it does not enforce the length constraint exactly, and   it is challenging to numerically deal with the penalized formulation. 
   (ii) Projection method \cite{weinan2001numerical,bartels2008numerical}: projecting the intermediate magnetization $\tilde{\textbf{m}}^{n+1}$ onto the unit sphere $  \tilde{\textbf{m}}^{n+1} / |  \tilde{\textbf{m}}^{n+1} | $ after solving the equation at every time level. This approach  is very simple to implement, and has been frequently used \cite{gao2014optimal,kim2017mimetic,an2021optimal,an2022analysis,gui2022convergence}. However, it is difficult to enforce energy dissipation and to construct higher-order schemes.
 (iii) Lagrange multiplier method: introducing a Lagrange multiplier to enforce the length constraint  \cite{cheng2022length}. The simplest version of this approach is equivalent to a projection method, but their higher-order versions are different. This approach is easy to implement, and capable to  enforce energy dissipation  at the expense of solving one nonlinear algebraic equation. But its error analysis appears to be difficult. 

Error analysis of numerical schemes for the Landau-Lifshitz equation  \eqref{e_original model} is difficult due to its highly nonlinear nature and the length constraint. E and Wang established  a first-order error estimate  for the projection scheme in \cite{weinan2001numerical}.
An, Gao, and Sun \cite{an2021optimal} (see also \cite{chen2021convergence}) presented an error analysis for first- and second-order  semi-implicit projection finite difference schemes,  their analysis relies on inverse inequalities, and requires 
$h^2\le \Delta t\le h^{1+\epsilon_0} $\footnote{More recently  they improved in \cite{an2022analysis} the condition to $\Delta t = O(\epsilon_0 h)$ with some small $\epsilon_0$ for a first-order projection scheme with explicit treatment of $ \textbf{m} \times ( \textbf{m} \times \Delta \textbf{m} )$.} with  $\epsilon_0\in (0,1)$ if the nonlinear term $ \textbf{m} \times ( \textbf{m} \times \Delta \textbf{m} )$ is treated explicitly while this condition is not needed if this nonlinear term  is treated semi-explicitly. 
 On the other hand,  Gui, Li and Wang \cite{gui2022convergence} derived an optimal-order error estimate for a linearly implicit, lumped mass FEM on rectangular mesh under the condition  $ \Delta t \geq  \kappa h^{r+1} $ for some $ r >1$, where $\kappa$ is any positive constant. 

There are a number of equivalent formulations of  the Landau-Lifshitz equation. An interesting formulation is 
\begin{equation}\label{e_original model2}
	\alpha \partial_t  \textbf{m} + \textbf{m}\times  \partial_t  \textbf{m} =(I- \textbf{m} \textbf{m}^{ t })\Delta  \textbf{m},
\end{equation}
where $\alpha$ is a damping parameter which can be expressed in terms of $\beta, \,\gamma$. An advantage of this formulation is that its nonlinear term is easier to treat  in error analysis, but on the other hand, it is more complicated to implement numerically. Akrivis et al. \cite{akrivis2021higher} constructed linearly implicit  time discretizations up to order 5 combined with higher-order non-conforming finite elements in space, which satisfy a discrete energy dissipation low but do not ensure normalization of the magnetization.
 Alouges et al. \cite{alouges2008new,alouges2006convergence,alouges2014convergent} proposed the tangent plane schemes with second-order in time and provided a proof of convergence  toward a weak solution. We refer to \cite{akrivis2021higher,alouges2008new,alouges2006convergence,alouges2014convergent,alouges2012convergent} and the references therein for more details on numerical approximations of \eqref{e_original model2}. 

The main purpose of this paper is to construct a new  class of high-order IMEX schemes for the Landau-Lifshitz equation, with a generalized SAV (GSAV) approach \cite{huang2022new} to satisfy a (modified) energy dissipation law, and with a projection approach to preserve the pointwise length constraint, and to carry out a rigorous error analysis. Our main contributions are:
\begin{itemize}
\item The new schemes enjoy  the following advantages: {\color{black}(i) they are  purely linear, and at each time step only require solving  (a) decoupled  elliptic equations with constant coefficients  when the nonlinear term is treated fully explicitly, or (b) a coupled elliptic system with variable coefficients  when the nonlinear term is treated semi-implicitly;} (ii) they preserve the pointwise length constraint; (iii) they satisfy a (modified) energy dissipation law, and  their solutions are uniformly  bounded.
\item We carry out a rigorous error analysis for the semi-discretized schemes up to fifth-order in a unified framework and establish error estimates  in 
$l^{\infty}(0,T;H^1(\Omega)) \bigcap l^{2}(0,T;H^2(\Omega))$ for the  magnetization field under a mild condition on the  exchange parameter $\beta$ if  $\beta \ne 0$, and on   $\Delta t \leq (1+2^{q+1}C_0^{q})^{-1}$, where $C_0$ is a positive constant and $q$ is the order of numerical scheme (except that $q=2$ for the first-order scheme). 


\end{itemize}
To the best of the authors' knowledge, these are the first error estimates for higher-order numerical schemes which enforce normalization of the magnetization for the Landau-Lifshitz equation  \eqref{e_original model}.  Note that these estimates are established for the semi-discretized (in time) schemes, but it is expected that  error estimates for consistent fully discretized schemes  can be derived  without severe time step constraints.

The paper is organized as follows. In Section 2,  we present an equivalent  formulation of the Landau-Lifshitz equation and some preliminaries needed in the sequel. In Section 3, we construct a class of  length preserving  IMEX-GSAV schemes, and derive an unconditional bound for the numerical solution. In Section 4, we carry out a rigorous error analysis for the new schemes up to fifth-order in a unified framework.   \textcolor{black}{ In Section 5, we carry out a rigorous error analysis for the case where the  nonlinear term is treated semi-implicitly.}  We present some  numerical experiments   in Section 6 to  validate our theoretical results, and conclude with a few remarks in the final section.

  \section{Reformulations for the Landau-Lifshitz equation and preliminaries}
In this section, we first present an equivalent formulation of the Landau-Lifshitz equation, 
and then describe some notations and results which will be frequently used in this paper.

 The Landau-Lifshitz equation \eqref{e_original model} has several equivalent forms. Since it is very difficult to deal with the nonlinear term $\textbf{m} \times ( \textbf{m} \times \Delta \textbf{m} )$ implicitly while an explicit treatment will lead to a severe time step constraint, we first rewrite the Landau-Lifshitz equation \eqref{e_original model} as
   \begin{equation}\label{e_LLmodel_reformulation}
\aligned
 \frac{\partial \textbf{m}}{\partial t} =  -\beta \textbf{m} \times \Delta \textbf{m}+ \gamma \Delta\textbf{m} +\gamma |\nabla \textbf{m} |^2 \textbf{m} ,
     \quad &\  {\rm in} \ \Omega\times J,
\endaligned
\end{equation}  
by using $| \textbf{m} |=1$ and the identity
\begin{equation}\label{e_curl operator}
	\aligned
	\textbf{a} \times ( \textbf{b} \times \textbf{c} ) = (\textbf{a} \cdot \textbf{c}) \textbf{b} - (\textbf{a} \cdot \textbf{b}) \textbf{c}, \ \ \textbf{a},\textbf{b}, \textbf{c} \in \mathbb{R}^3.
	\endaligned
\end{equation}
A case of particular interest is when $\beta=0$, which leads to 
  \begin{equation}\label{e_LLmodel}
\aligned
 \frac{\partial \textbf{m}}{\partial t} = \gamma\Delta\textbf{m} + \gamma |\nabla \textbf{m} |^2 \textbf{m}\textcolor{black}{,}
     \quad &\ {\rm in}  \ \Omega\times J\textcolor{black}{.}
\endaligned
\end{equation} 
 The above equation is also referred to as the heat flow for harmonic maps, and has been well studied (cf.  \cite{lin2008analysis,gui2022convergence}).
    
We now describe  some notations and results which will be frequently used in this paper. Throughout the paper, we use $C$, with or without subscript, to denote a positive
constant, which could have different values at different appearances.

We  use the standard notations $L^2(\Omega)$, $H^k(\Omega)$ and $W^{k,p}(\Omega)$ to denote the usual Sobolev spaces over $\Omega$. The norm corresponding to $H^k(\Omega)$ will be denoted simply by $\|\cdot\|_k$. In particular, $\|\cdot\|$ and  $(\cdot,\cdot)$ are used to denote the norm and the inner product in $L^2(\Omega)$, respectively. The vectors and vector spaces will be indicated by boldface type.

 The following lemmas will be frequently used in the sequel:
 
 \begin{lemma}(H\"older inequality) \label{lem: Holder inequality}
 Let $p, q, s >0$ such that  $\frac{1}{p}+\frac{1}{q}+\frac{1}{s}=1$. Then for vector functions $\textbf{u} \in \textbf{L}^p(\Omega)$, $\textbf{v}\in \textbf{L}^q(\Omega)$, and scalar function $w \in L^s(\Omega)$,  we have
 \begin{equation}\label{e_Preliminaries1}
\aligned
 \int_{\Omega} | (\textbf{u}, \textbf{v}) w | d \textbf{x} \leq \| \textbf{u} \|_{\textbf{L}^p} \| \textbf{v} \|_{\textbf{L}^q} \| w \|_{L^s}.
\endaligned
\end{equation}
\end{lemma}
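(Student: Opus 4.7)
The plan is to reduce the three-function statement to the classical two-function Hölder inequality via a pointwise Cauchy--Schwarz step. First I would note that, for almost every $\textbf{x}\in\Omega$, the Cauchy--Schwarz inequality in $\mathbb{R}^3$ yields
\begin{equation*}
|(\textbf{u}(\textbf{x}),\textbf{v}(\textbf{x}))\, w(\textbf{x})| \;\leq\; |\textbf{u}(\textbf{x})|\,|\textbf{v}(\textbf{x})|\,|w(\textbf{x})|,
\end{equation*}
so the integrand is dominated by a product of three nonnegative scalar functions. This reduces the claim to the generalized scalar Hölder inequality for three factors.

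Next I would establish the three-factor Hölder inequality by iterating the standard two-factor version. Setting $\frac{1}{r}=\frac{1}{q}+\frac{1}{s}$, so that $\frac{1}{p}+\frac{1}{r}=1$, I would apply Hölder once with exponents $p$ and $r$ to obtain
\begin{equation*}
\int_\Omega |\textbf{u}|\,|\textbf{v}|\,|w|\,d\textbf{x} \;\leq\; \|\,|\textbf{u}|\,\|_{L^p}\,\bigl\|\,|\textbf{v}|\,|w|\,\bigr\|_{L^r}.
\end{equation*}
A second application of Hölder with exponents $q/r$ and $s/r$ (which are conjugate since $\frac{r}{q}+\frac{r}{s}=1$) then gives
\begin{equation*}
\bigl\|\,|\textbf{v}|\,|w|\,\bigr\|_{L^r} \;\leq\; \|\,|\textbf{v}|\,\|_{L^q}\,\|w\|_{L^s}.
\end{equation*}
Combining the two displays yields the asserted bound, after identifying $\|\,|\textbf{u}|\,\|_{L^p}=\|\textbf{u}\|_{\textbf{L}^p}$ and similarly for $\textbf{v}$.

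The proof is essentially mechanical once Cauchy--Schwarz has trivialized the vector inner product, and there is no real obstacle; the only small point requiring attention is the bookkeeping of the exponent identity $\frac{r}{q}+\frac{r}{s}=1$, which follows immediately from the hypothesis $\frac{1}{p}+\frac{1}{q}+\frac{1}{s}=1$ and the definition of $r$. Since the paper only uses this lemma as a routine tool for the subsequent nonlinear estimates, no sharper or more delicate formulation is needed.
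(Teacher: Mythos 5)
Your proof is correct. The paper states this lemma as a standard preliminary without giving a proof, and your argument --- pointwise Cauchy--Schwarz in $\mathbb{R}^3$ followed by two applications of the classical two-factor H\"older inequality with the conjugate pair $p$ and $r$, where $\frac{1}{r}=\frac{1}{q}+\frac{1}{s}$ --- is exactly the standard derivation the authors implicitly rely on.
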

 
  \begin{lemma}(Interpolation inequalities)  \label{lem: Interpolation inequalities}
  	For any $\textbf{f}$, then there exists a positive
  		constant $C$ such that
 \begin{equation}\label{e_Preliminaries2}
\aligned
\| \textbf{f} \|_{\textbf{L}^k} \leq C \| \textbf{f} \|_{\textbf{L}^2}^{ \frac{6-k}{ 2k } } \| \textbf{f} \|_{\textbf{H}^1}^{ \frac{3k-6}{ 2k } }, \quad 3\le k\le 6,
\endaligned
\end{equation}
and
 \begin{equation}\label{e_Preliminaries3}
\aligned
\| \textbf{f} \|_{\textbf{L}^{\infty}} \leq C \| \textbf{f} \|_{\textbf{H}^1}^{ \frac{1}{ 2 } } \| \textbf{f} \|_{\textbf{H}^2}^{ \frac{1}{ 2 } }.
\endaligned
\end{equation}
\end{lemma}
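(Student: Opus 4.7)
Both estimates are standard Gagliardo--Nirenberg-type interpolation inequalities on a bounded domain $\Omega\subset\mathbb{R}^d$ with $d\le 3$. The plan is to reduce each to two well-established ingredients: the log-convexity of $L^p$ norms (three-line lemma / classical $L^p$-interpolation) and the Sobolev embedding $H^1(\Omega)\hookrightarrow L^6(\Omega)$, which is available because $d\le 3$ ensures $6\le 2d/(d-2)$ (with the usual interpretation for $d=1,2$). Since the coercive results are required componentwise, it suffices to prove them for scalar $f$ and apply them to each Cartesian component of $\textbf{f}$.

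For the first inequality, I would observe that for any $k\in[3,6]$ one can write
\begin{equation*}
\frac{1}{k}=\frac{\theta}{2}+\frac{1-\theta}{6},\qquad \theta=\frac{6-k}{2k},\qquad 1-\theta=\frac{3k-6}{2k},
\end{equation*}
so that $\theta\in[0,1]$. H\"older's inequality (Lemma~\ref{lem: Holder inequality} applied to $|f|^{k\theta}\cdot|f|^{k(1-\theta)}$ with exponents $2/(k\theta)$ and $6/(k(1-\theta))$) yields the classical log-convexity estimate $\|f\|_{L^k}\le\|f\|_{L^2}^{\theta}\|f\|_{L^6}^{1-\theta}$. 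Substituting the Sobolev embedding $\|f\|_{L^6}\le C\|f\|_{H^1}$ (valid for any $d\le 3$ under either homogeneous Neumann or periodic boundary conditions) then gives exactly \eqref{e_Preliminaries2}.

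For the Agmon-type bound \eqref{e_Preliminaries3}, I would argue as follows. In the periodic setting one works with Fourier series $f=\sum_{\xi\in\mathbb{Z}^d}\hat{f}(\xi)e^{i\xi\cdot x}$, so that
\begin{equation*}
\|f\|_{L^\infty}\le\sum_{\xi}|\hat{f}(\xi)|=\sum_\xi (1+|\xi|^2)^{-s/2}\,(1+|\xi|^2)^{s/2}|\hat{f}(\xi)|,
\end{equation*}
and Cauchy--Schwarz with any $s\in(d/2,2)$ bounds this by $C\|f\|_{H^s}$; interpolating $\|f\|_{H^s}\le\|f\|_{H^1}^{2-s}\|f\|_{H^2}^{s-1}$ and choosing $s=3/2$ (admissible since $d\le 3$) produces the claimed exponents $1/2,1/2$. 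For the homogeneous Neumann case one first performs an even reflection across each boundary face to reduce to the periodic/whole-space situation, after which the same Fourier argument applies.

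The only genuine obstacle is the boundary-condition issue in the Agmon estimate, since the clean Fourier proof lives naturally on the torus or $\mathbb{R}^d$; under the Neumann hypothesis of Section~1 this is handled by the even extension just mentioned, and on a general bounded domain one can alternatively invoke the standard extension theorem in $H^2$ to reduce to $\mathbb{R}^d$. Apart from that, both inequalities are standard and the constants $C$ depend only on $\Omega$ and $k$.
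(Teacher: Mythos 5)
The paper does not prove this lemma; it is stated as a standard preliminary, so there is no in-paper argument to compare against. Your proof of \eqref{e_Preliminaries2} is correct: the identity $\frac1k=\frac{\theta}{2}+\frac{1-\theta}{6}$ with $\theta=\frac{6-k}{2k}\in[0,\frac12]$ for $k\in[3,6]$, H\"older applied to $|f|^{k\theta}\cdot|f|^{k(1-\theta)}$, and the embedding $H^1\hookrightarrow L^6$ for $d\le 3$ give exactly the stated estimate.

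There is, however, a genuine gap in your proof of the Agmon inequality \eqref{e_Preliminaries3} in the case $d=3$, which the paper explicitly allows. Your Cauchy--Schwarz step requires $s>d/2$, i.e.\ $s\in(3/2,2)$ when $d=3$, yet you then choose $s=3/2$; this is the excluded endpoint, and indeed $H^{3/2}$ does \emph{not} embed into $L^\infty$ in three dimensions, so the bound $\|f\|_{L^\infty}\le C\|f\|_{H^{3/2}}$ you are implicitly using is false. Choosing any admissible $s>3/2$ and interpolating only yields $\|f\|_{L^\infty}\le C\|f\|_{H^1}^{2-s}\|f\|_{H^2}^{s-1}$ with $2-s<\frac12$, which is weaker than the claim. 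The standard repair is a high--low frequency splitting: for a cutoff $N\ge1$ write
\begin{equation*}
\sum_{\xi}|\hat f(\xi)|\le\Bigl(\sum_{|\xi|\le N}(1+|\xi|^2)^{-1}\Bigr)^{1/2}\|f\|_{H^1}+\Bigl(\sum_{|\xi|>N}(1+|\xi|^2)^{-2}\Bigr)^{1/2}\|f\|_{H^2}\le C\bigl(N^{1/2}\|f\|_{H^1}+N^{-1/2}\|f\|_{H^2}\bigr),
\end{equation*}
using that in $d=3$ the two lattice sums are $O(N)$ and $O(N^{-1})$ respectively, and then optimize $N\sim\|f\|_{H^2}/\|f\|_{H^1}$ to obtain the exponents $\frac12,\frac12$. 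For $d=1,2$ your original argument (or an even easier one) does work. The reduction from Neumann or general domains to the periodic/whole-space setting via reflection or the $H^2$ extension operator is fine, but it does not remove this endpoint obstruction, so the splitting argument is needed regardless.
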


We will frequently use the following discrete version of the Gronwall lemma (see, for instance, \cite{shen1990long,HeSu07}):

\medskip
\begin{lemma} \label{lem: gronwall2}
Let $a_k$, $b_k$, $c_k$, $d_k$, $\gamma_k$, $\Delta t_k$ be non negative real numbers such that
\begin{equation}\label{e_Gronwall3}
\aligned
a_{k+1}-a_k+b_{k+1}\Delta t_{k+1}+c_{k+1}\Delta t_{k+1}-c_k\Delta t_k\leq a_kd_k\Delta t_k+\gamma_{k+1}\Delta t_{k+1}
\endaligned
\end{equation}
for all $0\leq k\leq m$. Then
 \begin{equation}\label{e_Gronwall4}
\aligned
a_{m+1}+\sum_{k=0}^{m+1}b_k\Delta t_k \leq \exp \left(\sum_{k=0}^md_k\Delta t_k \right)\{a_0+(b_0+c_0)\Delta t_0+\sum_{k=1}^{m+1}\gamma_k\Delta t_k \}.
\endaligned
\end{equation}
\end{lemma}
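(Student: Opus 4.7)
The plan is to reduce the stated inequality to a classical discrete Gronwall estimate by telescoping and then by pulling the $b$-sum onto the left-hand side of a Gronwall-ready scalar recurrence. First I would sum the hypothesis \eqref{e_Gronwall3} over $k=0,1,\ldots,n$ for an arbitrary $n\leq m$. The telescoping collapses the differences cleanly: $\sum_{k=0}^{n}(a_{k+1}-a_k)=a_{n+1}-a_0$, $\sum_{k=0}^{n}(c_{k+1}\Delta t_{k+1}-c_k\Delta t_k)=c_{n+1}\Delta t_{n+1}-c_0\Delta t_0$, and $\sum_{k=0}^{n}b_{k+1}\Delta t_{k+1}=\sum_{k=1}^{n+1}b_k\Delta t_k$. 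Adding $b_0\Delta t_0$ to both sides and discarding the non-negative term $c_{n+1}\Delta t_{n+1}$ on the left gives the tidy intermediate bound
\begin{equation*}
a_{n+1}+\sum_{k=0}^{n+1}b_k\Delta t_k \leq a_0+(b_0+c_0)\Delta t_0+\sum_{k=1}^{n+1}\gamma_k\Delta t_k+\sum_{k=0}^{n}a_k d_k\Delta t_k.
\end{equation*}

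Next I would introduce the shorthand $S_n:=a_n+\sum_{k=0}^{n}b_k\Delta t_k$ and $B_n:=a_0+(b_0+c_0)\Delta t_0+\sum_{k=1}^{n+1}\gamma_k\Delta t_k$. Since $B_n$ is non-decreasing in $n$ and $a_k\leq S_k$ by non-negativity of the $b_k\Delta t_k$, the above inequality rewrites as the scalar recurrence
\begin{equation*}
S_{n+1}\leq B_n+\sum_{k=0}^{n}d_k\Delta t_k\, S_k,\qquad 0\leq n\leq m.
\end{equation*}
This is precisely the hypothesis of the standard explicit discrete Gronwall lemma, which yields $S_{n+1}\leq B_n\prod_{k=0}^{n}(1+d_k\Delta t_k)\leq B_n\exp\bigl(\sum_{k=0}^{n}d_k\Delta t_k\bigr)$ via the elementary estimate $1+x\leq e^x$. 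Setting $n=m$ recovers \eqref{e_Gronwall4}.

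There is really no serious obstacle here: the proof is essentially bookkeeping. The one point that requires a bit of care is the treatment of the $c_k\Delta t_k$ contribution; it does not appear in the final bound \eqref{e_Gronwall4}, so one must arrange to discard $c_{n+1}\Delta t_{n+1}$ from the left-hand side after telescoping while keeping $c_0\Delta t_0$ in the initial-data term on the right. The other mildly delicate step is the reshuffling that absorbs $b_0\Delta t_0$ into the summation $\sum_{k=0}^{n+1}b_k\Delta t_k$; this is why the constant $b_0\Delta t_0$ appears explicitly inside the bracket on the right-hand side of \eqref{e_Gronwall4} rather than being hidden inside $\sum\gamma_k\Delta t_k$. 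Once those two accounting steps are done, the invocation of the classical Gronwall inequality for $S_n$ is routine, and the exponential factor $\exp(\sum_{k=0}^{m}d_k\Delta t_k)$ emerges directly.
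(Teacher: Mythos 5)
The paper does not prove this lemma at all: it is quoted directly from the literature (the citations to Shen's 1990 paper and to He--Sun), so there is no in-paper argument to compare against. Your proof is correct and is essentially the standard argument one finds in those references: telescope the hypothesis over $k=0,\ldots,n$, discard the non-negative boundary term $c_{n+1}\Delta t_{n+1}$, absorb $b_0\Delta t_0$ into the $b$-sum, and then run the explicit discrete Gronwall recursion on $S_n=a_n+\sum_{k=0}^{n}b_k\Delta t_k$ using $a_k\le S_k$ and $1+x\le e^x$. The only step you leave implicit is the base case of the recursion, namely $S_0=a_0+b_0\Delta t_0\le a_0+(b_0+c_0)\Delta t_0\le B_m$, which is needed to start the induction $S_{n+1}\le B_m\prod_{k=0}^{n}(1+d_k\Delta t_k)$; it follows immediately from the non-negativity of $c_0\Delta t_0$ and of the $\gamma_k\Delta t_k$, but it is worth stating since it is exactly where the $(b_0+c_0)\Delta t_0$ term in the bracket earns its keep. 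With that one sentence added, the argument is complete.
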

  
  \section{Norm preserving and energy decreasing IMEX-GSAV schemes}
 We construct  in this section higher-order norm preserving and energy decreasing schemes for the  Landau-Lifshitz equation \eqref{e_LLmodel_reformulation} based on the IMEX BDF-$l$ formulae.

  Set $$\Delta t=T/N,\ t^n=n\Delta t, \ d_t g^{n+1}=\frac{g^{n+1}-g^n}{\Delta t},
\ {\rm for} \ n\leq N,$$
 and introduce an SAV
 \begin{equation}\label{e_definition of R}
\aligned
R(t)=E(\textbf{m})+K_0, \ E(\textbf{m}) = \frac1 2 \| \nabla \textbf{m} \|^2,
\endaligned
\end{equation}
where $K_0$ is a positive constant. We  expand the governing system as follows:
  \begin{numcases}{}
  \frac{\partial \textbf{m} }{\partial t} = \gamma \Delta\textbf{m} + \gamma |\nabla \textbf{m} |^2 \textbf{m} - \beta \textbf{m}\times  \Delta\textbf{m},  \label{e_model_transform1}\\
  \frac{ d R}{ d t} = - \frac{R}{ E( \textbf{m} ) +K_0} \gamma \| \textbf{m} \times \Delta \textbf{m} \|^2, \label{e_model_transform2} \\
  | \textbf{m} (x,t)|=1\quad\forall x,\, t. \label{e_model_transform3}
\end{numcases}
 
It is clear that with $R(0)=E(\textbf{m})|_{t=0}+K_0$, the solution of the original system \eqref{e_LLmodel} is also a solution of the above system. We construct below IMEX GSAV schemes for the expanded system \eqref{e_model_transform1}\textcolor{black}{-}\eqref{e_model_transform3}.

Assuming $\textbf{m}^j$, $\tilde{\textbf{m}}^{j}$ with $j=n,n-1,\ldots, n-l+1$ are given, we solve $\tilde{\textbf{m}}^{n+1}$ from 
\begin{equation}\label{e_High-order1}
\aligned
D_l \tilde{\textbf{m}}^{n+1} = & \gamma \Delta \tilde{\textbf{m}}^{n+1}+ S \Delta( \tilde{\textbf{m}}^{n+1}- B_l( \textbf{m}^{n}))\\
&+ \gamma  | \nabla B_l( \textbf{m}^{n})  |^2 B_l( \textbf{m}^{n} )- \beta B_l( \textbf{m}^{n} ) \times \Delta \textcolor{black}{ B_l ( \tilde{\textbf{m}}^{n} ) },
\endaligned
\end{equation} 
where $S\ge 0$ is a stabilization parameter, $D_l$ denotes the BDF-$l$ formula, and $B_l$ is the $l$-th order Adams-Bashforth extrapolation operator. 

Next we compute $R^{n+1}$,  $\xi^{n+1}$  from 
\begin{equation}\label{e_High-order2}
	\aligned
	\frac{ R^{n+1}  - R^{n} } { \Delta t } = & - \gamma \xi^{n+1} \| B_l(\textbf{m}^{n} ) \times \Delta \tilde{\textbf{m}}^{n+1} \|^2, \ \ 
	\xi^{n+1}= \frac{ R^{n+1} }{ E( \tilde{\textbf{m}}^{n+1} )+K_0 },
	\endaligned
\end{equation}  
where  $ R(t^{n+1}) = E( \textbf{m}(t^{n+1} )) +K_0 $ and $ E( \textbf{m}(t^{n+1} )) = \frac{1}{2} \int_{\Omega} | \nabla \textbf{m}(t^{n+1} ) |^2 d \textbf{x}$.

Finally we update $\textbf{m}^{n+1}$ by 
\begin{equation}\label{e_High-order_correct}
	\aligned
	\textbf{m}^{n+1} = \frac{ \hat{\textbf{m}}^{n+1} } { | \hat{\textbf{m}}^{n+1}  | } , \ \text{with }\ \hat{\textbf{m}}^{n+1} = \eta_{l}^{n+1} \tilde{\textbf{m}}^{n+1}+ (\eta_{l}^{n+1} -1)^w, 
	\endaligned
\end{equation} 
where 
\begin{equation}\label{e_High-order_eta}
	\aligned
	\eta_{1}^{n+1} = 1- (1-\xi^{n+1} )^2; \  \eta_{l}^{n+1} = 1- (1-\xi^{n+1} )^l,\;l \ge 2.
	\endaligned
\end{equation} 
 Here $w$ can be chosen as any positive integer. In general $w=1$ is sufficient, and our numerical  experiments suggest that $w \geq 2$ may lead to better results. For readers' convenience, $D_l $ and $B_l$ with  $l=1,2,3$ are given below. Formulae for other $l$ can be easily derived from Taylor expansions:

first-order scheme: 
\begin{equation*}\label{e_First-order_operator}
\aligned
D_1 \tilde{\textbf{m}}^{n+1} = \frac{ \tilde{\textbf{m}}^{n+1} - \tilde{\textbf{m}}^{n} }{ \Delta t } , \  B_1( \textbf{m}^{n}) =  \textbf{m}^{n} ;
\endaligned
\end{equation*}

second-order scheme: 
\begin{equation*}\label{e_Second-order_operator}
\aligned
D_2 \tilde{\textbf{m}}^{n+1} = \frac{ 3 \tilde{\textbf{m}}^{n+1} - 4 \tilde{\textbf{m}}^{n} + \tilde{\textbf{m}}^{n-1} }{ 2 \Delta t }, \ 
B_2( \textbf{m}^{n}) = 2 \textbf{m}^{n} -  \textbf{m}^{n-1};
\endaligned
\end{equation*} 

third-order scheme: 
\begin{equation*}\label{e_Third-order_operator}
\aligned
D_3 \tilde{\textbf{m}}^{n+1} = \frac{ 11 \tilde{\textbf{m}}^{n+1} - 18 \tilde{\textbf{m}}^{n} + 9 \tilde{\textbf{m}}^{n-1} - 2 \tilde{\textbf{m}}^{n-2} }{ 6 \Delta t }, \
B_3( \textbf{m}^{n} ) = 3 \textbf{m}^{n} -  3 \textbf{m}^{n-1} + \textbf{m}^{n-2}.
\endaligned
\end{equation*} 

It is clear that the above scheme admits a unique solution, and it is very easy to implement as it only requires solving a vector Poisson type equation in \eqref{e_High-order1}.

\begin{rem} \label{special discretization}
In this work we only consider the time discretization. Note that the proposed IMEX GSAV schemes \eqref{e_High-order1}-\eqref{e_High-order_eta} can be easily implemented with finite difference and spectral methods, but theirs implementation with finite element methods needs special consideration when $\beta\ne 0$ due the explicit treatment for the term $\beta B_l( \textbf{m}^{n} ) \times \Delta B_l( \textbf{m}^{n} )$. An additional, well-known difficulty in the case of finite element methods is that the normalization of a non-constant polynomial vector field is not polynomial. One can combine  \eqref{e_High-order1}-\eqref{e_High-order_eta} with a Gauss-Seidel techniques in \cite{wang2001gauss} to treat the term  $\beta B_l( \textbf{m}^{n} ) \times \Delta B_l( \textbf{m}^{n} )$ semi-implicitly when the parameter $\beta$ is relatively large.
\end{rem}
 \medskip

 \begin{theorem}\label{thm_energy stability_high order} Let $\{\textbf{m}^{n},\tilde {\textbf{m}}^{n},\hat{\textbf{m}}^{n}\}$ be the solution of  \eqref{e_High-order1}-\eqref{e_High-order_eta}.
Given $R^n>0$,  we have  $\xi^{n+1}>0$ and 
 \begin{equation}\label{e_stability1}
\aligned
0<R^{n+1} <  R^{n}, \ \ \forall n \leq T/\Delta t.
\endaligned
\end{equation} 
In addition, there exists a constant $M_T$ independent of $\Delta t$ such that
\begin{equation}\label{e_stability2}
\aligned
\| \nabla \hat{\textbf{m}}^{n+1}  \| + \sum_{k=0}^{n} \Delta t \gamma \xi^{k+1} \|  B_l(\textbf{m}^{k} ) \times \Delta \tilde{\textbf{m}}^{k+1} \|^2 
\leq M_T, \ \ \forall n \leq T/\Delta t, and\ n=l-1,\cdots,\ N-1.
\endaligned
\end{equation} 

\end{theorem}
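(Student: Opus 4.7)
The plan is to first disentangle the implicit definition of $\xi^{n+1}$ in \eqref{e_High-order2}. Substituting $R^{n+1}=\xi^{n+1}(E(\tilde{\textbf{m}}^{n+1})+K_0)$ into the update equation for $R$ yields the explicit formula
\[
\xi^{n+1} = \frac{R^n}{E(\tilde{\textbf{m}}^{n+1})+K_0+\Delta t\,\gamma\|B_l(\textbf{m}^n)\times\Delta\tilde{\textbf{m}}^{n+1}\|^2}.
\]
Because $K_0>0$ and the other terms in the denominator are nonnegative, the denominator is strictly positive; hence $R^n>0$ forces $\xi^{n+1}>0$ and then $R^{n+1}=\xi^{n+1}(E(\tilde{\textbf{m}}^{n+1})+K_0)>0$. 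The inequality $R^{n+1}<R^n$ follows directly from the dissipative form of the update, and a simple induction on $n$ starting from $R^0=E(\textbf{m}_0)+K_0>0$ closes \eqref{e_stability1}.

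Next, telescoping $R^{k+1}-R^k = -\Delta t\,\gamma\,\xi^{k+1}\|B_l(\textbf{m}^k)\times\Delta\tilde{\textbf{m}}^{k+1}\|^2$ from $k=0$ to $n$ gives
\[
R^{n+1}+\sum_{k=0}^n \Delta t\,\gamma\,\xi^{k+1}\|B_l(\textbf{m}^k)\times\Delta\tilde{\textbf{m}}^{k+1}\|^2 = R^0 = E(\textbf{m}_0)+K_0,
\]
and since $R^{n+1}>0$, this bounds the dissipation sum by $R^0$, producing the second term in \eqref{e_stability2}.

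For the $H^1$ bound on $\hat{\textbf{m}}^{n+1}$, I note that the shift $(\eta_l^{n+1}-1)^w$ in \eqref{e_High-order_correct} is spatially constant, so $\nabla\hat{\textbf{m}}^{n+1}=\eta_l^{n+1}\nabla\tilde{\textbf{m}}^{n+1}$. Using $\|\nabla\tilde{\textbf{m}}^{n+1}\|^2=2E(\tilde{\textbf{m}}^{n+1})$ together with $\xi^{n+1}E(\tilde{\textbf{m}}^{n+1})=R^{n+1}-\xi^{n+1}K_0\leq R^0$, we obtain
\[
\|\nabla\hat{\textbf{m}}^{n+1}\|^2 = 2(\eta_l^{n+1})^2 E(\tilde{\textbf{m}}^{n+1}) \leq 2R^0\,\frac{(\eta_l^{n+1})^2}{\xi^{n+1}}.
\]
Since $\xi^{n+1}=R^{n+1}/(E(\tilde{\textbf{m}}^{n+1})+K_0)\leq R^0/K_0$, the variable $\xi^{n+1}$ lies in the bounded interval $(0,R^0/K_0]$. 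On this interval the function $\xi\mapsto (1-(1-\xi)^l)^2/\xi$ (and its $l=1$ analogue with exponent $2$) is continuous and vanishes linearly as $\xi\to 0^+$, hence uniformly bounded, yielding \eqref{e_stability2} with an explicit $M_T$ depending only on $R^0$, $K_0$ and $l$.

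The main technical point is this last ratio estimate: one has to verify that the correction factor $\eta_l^{n+1}$ degenerates like $O(\xi^{n+1})$ near $\xi^{n+1}=0$ so as to absorb the potential singularity of $E(\tilde{\textbf{m}}^{n+1})$, which is precisely why $\eta_l^{n+1}$ is designed to share the zero of $\xi^{n+1}$ (and why the first-order case uses the squared form in \eqref{e_High-order_eta}). The remaining ingredients are algebraic manipulation of \eqref{e_High-order2} and the telescoping of the discrete GSAV energy law; no Gronwall argument or time-step restriction is required here.
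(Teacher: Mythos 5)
Your proposal is correct and follows essentially the same route as the paper: positivity and monotonicity of $R^{n+1}$ from the explicit solution of the coupled update in \eqref{e_High-order2}, the telescoping identity $R^{n+1}+\sum_k \Delta t\,\gamma\,\xi^{k+1}\|B_l(\textbf{m}^k)\times\Delta\tilde{\textbf{m}}^{k+1}\|^2=R^0$ for the dissipation sum, and the factorization $\eta_l^{n+1}=\xi^{n+1}P(\xi^{n+1})$ combined with $\xi^{n+1}\le C/(E(\tilde{\textbf{m}}^{n+1})+K_0)$ to control $(\eta_l^{n+1})^2\|\nabla\tilde{\textbf{m}}^{n+1}\|^2$. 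The only cosmetic difference is that you bound the ratio $(\eta_l^{n+1})^2/\xi^{n+1}$ directly, whereas the paper first bounds $|\eta_l^{n+1}|$ by $M_1/(\|\nabla\tilde{\textbf{m}}^{n+1}\|^2+1)$ and then multiplies by $\|\nabla\tilde{\textbf{m}}^{n+1}\|^2$; both hinge on the same two facts.
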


\begin{proof}
Given $R^n>0$, it follows from \eqref{e_High-order2} that 
\begin{equation}\label{e_High-order_Stability1}
\aligned
R^{n+1} =\frac{1 }{ 1+ \Delta t  \gamma \frac{ \| B_l(\textbf{m}^{n} ) \times \Delta \tilde{\textbf{m}}^{n+1} \|^2 }{E( \tilde{\textbf{m}}^{n+1} )+K_0 } }R^n > 0.
\endaligned
\end{equation} 
In addition, multiplying \eqref{e_High-order2} with $\Delta t$ and taking the sum for $n$ from 0 to $m$ result in
\begin{equation}\label{e_High-order_Stability2}
\aligned
R^{m+1} + \sum_{n=0}^{m} \Delta t  \gamma \xi^{n+1} \|   B_l(\textbf{m}^{n} ) \times \Delta \tilde{\textbf{m}}^{n+1} \|^2 = R^0.
\endaligned
\end{equation} 
Denote $R^0 := M;$ the \eqref{e_High-order_Stability2} implies 
\begin{equation}\label{e_High-order_Stability3}
\aligned
R^{m+1} + \sum_{n=0}^{m} \Delta t  \gamma \xi^{n+1} \|   B_l(\textbf{m}^{n} ) \times \Delta \tilde{\textbf{m}}^{n+1} \|^2 \leq M.
\endaligned
\end{equation}
It then follows from \eqref{e_High-order2} that 
\begin{equation}\label{e_High-order_Stability4}
\aligned
 \xi^{n+1} = \frac{ R^{n+1} }{ E( \tilde{\textbf{m}}^{n+1} )+K_0} \leq \frac{ 2M }{ \| \nabla \tilde{\textbf{m}}^{n+1} \|^2 +1 },
\endaligned
\end{equation} 
where, without loss of generality, we assume that the positive constant $K_0 \geq \frac 1 2$. We derive from \eqref{e_High-order_eta} that $\eta_l^{n+1}  =  \xi^{n+1} P_{q}( \xi^{n+1} )$, where $ P_{q} $ is a polynomial function of degree $q$ with $q=1$ for $l=1$ and $q=l-1$ for $l>1$. We can then derive from \eqref{e_High-order_Stability4} that there exists a positive constant $M_1$ such that
\begin{equation}\label{e_High-order_Stability5}
\aligned
| \eta_l^{n+1} | = | \xi^{n+1} P_{q}( \xi^{n+1} ) | \leq \frac{ M_1 }{ \| \nabla \tilde{\textbf{m}}^{n+1} \|^2 +1 } .
\endaligned
\end{equation} 
 Thus we have 
\begin{equation}\label{e_High-order_Stability6}
\aligned
\| \nabla \hat{\textbf{m}}^{n+1}  \|^2 = (  \eta_l^{n+1} )^2  \| \nabla \tilde{\textbf{m}}^{n+1}  \|^2 \leq \left( \frac{ M_1 }{ \| \nabla \tilde{\textbf{m}}^{n+1} \|^2 +1 }  \right) ^2  \| \nabla \tilde{\textbf{m}}^{n+1}  \|^2 \leq M_1^2,
\endaligned
\end{equation} 
which implies the desired results \eqref{e_stability2}.
\end{proof}
\begin{rem}
	The bound for $B_l(\textbf{m}^{k} ) \times \Delta \tilde{\textbf{m}}^{k+1}$  in \eqref{e_stability2} is particularly useful in the error analysis.
\end{rem}



 \section{Error estimate}
In this section, we carry out a  rigorous error analysis for  \eqref{e_High-order1}-\eqref{e_High-order_eta} with $\beta \ne 0$. For the sake of simplicity we  set the stabilization parameter $S=0$.

We  denote
   \begin{numcases}{}
\displaystyle \tilde{e}_{\textbf{m}}^{n+1}= \textbf{m}(t^{n+1}) - \tilde{\textbf{m}}^{n+1},\ \
\displaystyle e_{\textbf{m}}^{n+1}= \textbf{m}(t^{n+1}) - \textbf{m}^{n+1}, \notag\\
\displaystyle e_{R }^{n+1}= R(t^{n+1}) - R^{n+1} .
\end{numcases}

We first recall the following  important result by Nevanlinna and Odeh \cite{nevanlinna1981multiplier} based on  Dahlquist's G-stability theory.
\begin{lemma} \label{lem: multiplier step}
\cite{nevanlinna1981multiplier} For $1\leq l \leq 5$, there exists $0\leq \tau_l<1$, a positive definite symmetric matrix $G=(g_{i,j}) \in \mathbb{R}^{l,l} $, and real numbers $ \xi_0,\ldots, \xi_5$ such that
\begin{equation}\label{e_multiplier step}
\aligned
\Delta t ( D_l \textbf{m}^{n+1} , \textbf{m}^{n+1} - \tau_l \textbf{m}^{n} ) = & \sum\limits_{i,j=1}^l g_{i,j}
( \textbf{m}^{n+1+i-l } ,  \textbf{m}^{n+1+j-l } ) \\
& - \sum\limits_{i,j=1}^l g_{i,j}
( \textbf{m}^{n+i-l } ,  \textbf{m}^{n+j-l } )  + \|  \sum\limits_{i=0}^l  \xi_i \textbf{m}^{n+1+i-l} \|^2,
\endaligned
\end{equation} 
where the smallest possible values of $\tau_l$ are
$$ \tau_1=\tau_2=0, \ \tau_3=0.0836, \ \tau_4 =0.2878, \ \tau_5 =0.8160. $$
\end{lemma}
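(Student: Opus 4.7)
The identity is a manifestation of Dahlquist's $G$-stability theorem applied to the BDF-$l$ method, so I would proceed along that classical route. First I would express $D_l$ through its characteristic polynomial $\rho(\zeta) = \sum_{j=0}^{l} \alpha_j \zeta^j$, with the $\alpha_j$ being the BDF-$l$ coefficients arising from $\sum_{k=1}^{l}\frac{1}{k}(1-\zeta^{-1})^k \zeta^l$, so that $\Delta t\, D_l \textbf{m}^{n+1} = \sum_{j=0}^l \alpha_j \textbf{m}^{n+1+j-l}$. With this notation the left-hand side of \eqref{e_multiplier step} becomes a real bilinear form in $\textbf{m}^{n+1-l},\ldots,\textbf{m}^{n+1}$ whose coefficients are generated by the products of the $\alpha_j$ with the coefficients of the Nevanlinna--Odeh multiplier $\sigma(\zeta) := \zeta^l - \tau_l \zeta^{l-1}$.

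Next I would invoke Dahlquist's equivalence theorem: the existence of a positive-definite symmetric matrix $G \in \mathbb{R}^{l\times l}$ and reals $\xi_0,\ldots,\xi_l$ making \eqref{e_multiplier step} valid for every sequence $\{\textbf{m}^k\}$ in any inner product space is equivalent to the frequency-domain condition
\[
\mathrm{Re}\,\frac{\rho(\zeta)}{\sigma(\zeta)} \ \geq\ 0 \qquad \text{for all }\ |\zeta|\geq 1 \ \text{with}\ \sigma(\zeta)\neq 0.
\]
The telescoping quadratic form $\sum_{i,j} g_{i,j}(\cdot,\cdot)$ then represents the stored ``$G$-energy'', while the non-negative square $\|\sum_{i=0}^l \xi_i \textbf{m}^{n+1+i-l}\|^2$ is produced by a Fej\'er--Riesz factorization of the non-negative trigonometric polynomial $\mathrm{Re}\bigl(\rho(e^{i\theta})\overline{\sigma(e^{i\theta})}\bigr)$ on the unit circle.

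The main obstacle is the sharp identification of the smallest admissible $\tau_l$ for $l\in\{3,4,5\}$. For $l=1,2$ the BDF method is $A$-stable, so $\mathrm{Re}(\rho(\zeta)/\zeta^l)\geq 0$ on $|\zeta|\geq 1$ and $\tau_l=0$ works trivially. For $l\in\{3,4,5\}$ BDF-$l$ is only $A(\alpha_l)$-stable, and one seeks the smallest $\tau_l\in[0,1)$ for which $\rho(\zeta)/(\zeta^l-\tau_l\zeta^{l-1})$ has non-negative real part outside the closed unit disk. Setting $\zeta=e^{i\theta}$ reduces this to a one-dimensional positivity check whose critical threshold is computed numerically; this yields the reported values $\tau_3\approx 0.0836$, $\tau_4\approx 0.2878$, $\tau_5\approx 0.8160$, together with explicit formulas for $G$ and the $\xi_i$ obtained from the factorization. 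Since the construction and sharpness of these thresholds are carried out in full detail in \cite{nevanlinna1981multiplier}, we simply cite their result and proceed.
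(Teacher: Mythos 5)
The paper offers no proof of this lemma at all: it is recalled verbatim from Nevanlinna and Odeh with only the citation, exactly as your proposal ultimately does. Your sketch of the underlying machinery (the BDF characteristic polynomial, the multiplier $\sigma(\zeta)=\zeta^l-\tau_l\zeta^{l-1}$, Dahlquist's equivalence between the algebraic identity with a positive definite $G$ and the positive-realness of $\rho/\sigma$ outside the unit disk, and the numerically determined sharp thresholds for $l=3,4,5$) is an accurate account of how the cited result is actually established, so your route coincides with the paper's in substance while adding a correct and harmless outline.
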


The proof of the following main result relies essentially  on the stability result in \eqref{e_stability2} and the above lemma.
\medskip
\begin{theorem}\label{the: error_estimate_final}
 \textcolor{black}{ 
 Supposing that the damping parameter $\gamma$ satisfies
 \begin{equation}\label{e_beta_value1}
\aligned
 \gamma > \frac{  ( 1+ \tau_l + l \tau_l ) }{ 2(1-\tau_l )}  | \beta | ,
\endaligned
\end{equation} 
}
 and assuming  $\textbf{m}\in H^{l+1}(0,T;\textbf{L}^2(\Omega))\bigcap H^{l}(0,T;\textbf{W}^{1,3}(\Omega)) \bigcap L^{\infty}(0,T; H^4(\Omega))$, then for the scheme \eqref{e_High-order1}-\eqref{e_High-order_eta}, we have
 \begin{equation*}
\aligned
& \|  \textbf{m}^{n+1} - \textbf{m}(t^{n+1}) \|^2 + \|  \nabla( \textbf{m}^{n+1} - \textbf{m}(t^{n+1}) ) \|^2 + \|  \hat{\textbf{m}}^{n+1} - \textbf{m}(t^{n+1}) \|^2 \\
& + \|  \nabla( \hat{\textbf{m}}^{n+1} - \textbf{m}(t^{n+1}) ) \|^2 + \|  \tilde{\textbf{m}}^{n+1} - \textbf{m}(t^{n+1}) \|^2 \\
& + \|  \nabla( \tilde{\textbf{m}}^{n+1} - \textbf{m}(t^{n+1}) ) \|^2 +  \Delta t \sum\limits_{k=0}^{n} \|  \Delta( \tilde{\textbf{m}}^{k+1} - \textbf{m}(t^{k+1}) ) \|^2 \\
\leq & C(\Delta t)^{2l}, \ \ \ \forall \ 0\leq n\leq N-1,\quad 1\le l\le 5,
\endaligned
\end{equation*} 
where $C$ is a positive constant  independent of $\Delta t$.
\end{theorem}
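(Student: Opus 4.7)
The plan is to combine the stability estimate \eqref{e_stability2}, the Nevanlinna--Odeh multiplier identity of Lemma \ref{lem: multiplier step}, and a bootstrap argument to absorb the nonlinearities, then close the argument with the discrete Gronwall Lemma \ref{lem: gronwall2}.

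\textbf{Step 1: Error equation for $\tilde{\textbf{m}}^{n+1}$.} First I would write the PDE \eqref{e_LLmodel_reformulation} at $t^{n+1}$, subtract \eqref{e_High-order1} (with $S=0$), and obtain
\begin{equation*}
D_l \tilde{e}_{\textbf{m}}^{n+1} = \gamma \Delta \tilde{e}_{\textbf{m}}^{n+1} + \gamma\bigl(|\nabla \textbf{m}(t^{n+1})|^2 \textbf{m}(t^{n+1}) - |\nabla B_l(\textbf{m}^n)|^2 B_l(\textbf{m}^n)\bigr) - \beta\bigl(\textbf{m}(t^{n+1})\times\Delta \textbf{m}(t^{n+1}) - B_l(\textbf{m}^n)\times \Delta B_l(\tilde{\textbf{m}}^n)\bigr) + T_l^{n+1},
\end{equation*}
where $T_l^{n+1}$ is the BDF-$l$ truncation error, satisfying $\|T_l^{n+1}\|\le C(\Delta t)^l$ under the regularity assumptions. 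Note that errors of the extrapolated quantities are expressed via $e_{\textbf{m}}^j$ for $j\le n$, and these are linked to $\tilde{e}_{\textbf{m}}^j$ through the projection step, which I will handle in Step 4.

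\textbf{Step 2: Energy identity via the Nevanlinna--Odeh multiplier.} I would test the error equation against the multiplier $\tilde{e}_{\textbf{m}}^{n+1} - \tau_l \tilde{e}_{\textbf{m}}^n$ (with $\tau_l$ from Lemma \ref{lem: multiplier step}) in $L^2$, and separately against $-\Delta(\tilde{e}_{\textbf{m}}^{n+1} - \tau_l \tilde{e}_{\textbf{m}}^n)$ to gain $H^1$ control and the $l^2(H^2)$ dissipation. Lemma \ref{lem: multiplier step} converts the $(D_l\tilde{e}_{\textbf{m}}^{n+1},\cdot)$ term into a telescoping quadratic form in the $G$-norm. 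The diffusion term $\gamma(\Delta \tilde{e}_{\textbf{m}}^{n+1},\tilde{e}_{\textbf{m}}^{n+1}-\tau_l \tilde{e}_{\textbf{m}}^n)$ is split using Cauchy--Schwarz as $\gamma\|\nabla\tilde{e}_{\textbf{m}}^{n+1}\|^2$ plus a mixed term that Young's inequality dominates by $\tau_l\gamma\|\nabla\tilde{e}_{\textbf{m}}^{n+1}\|^2$ and tail contributions from earlier steps, leaving coercivity factor $(1-\tau_l)\gamma$.

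\textbf{Step 3: Treatment of the gyroscopic term and the nonlinearity.} The cross-product error rewrites via
\begin{equation*}
\textbf{m}(t^{n+1})\times\Delta \textbf{m}(t^{n+1}) - B_l(\textbf{m}^n)\times \Delta B_l(\tilde{\textbf{m}}^n)
= \bigl[\textbf{m}(t^{n+1})-B_l(\textbf{m}^n)\bigr]\times\Delta \textbf{m}(t^{n+1}) + B_l(\textbf{m}^n)\times\Delta\bigl[\textbf{m}(t^{n+1})-B_l(\tilde{\textbf{m}}^n)\bigr].
\end{equation*}
After testing against $-\Delta(\tilde{e}_{\textbf{m}}^{n+1}-\tau_l \tilde{e}_{\textbf{m}}^n)$, the dangerous term is $\beta(B_l(\textbf{m}^n)\times\Delta B_l(\tilde{e}_{\textbf{m}}^n),\Delta\tilde{e}_{\textbf{m}}^{n+1})$, bounded in $L^\infty$ by $|\beta|\|B_l(\textbf{m}^n)\|_{L^\infty}\|\Delta B_l(\tilde{e}_{\textbf{m}}^n)\|\|\Delta\tilde{e}_{\textbf{m}}^{n+1}\|$. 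Using the uniform bound $\|\nabla \hat{\textbf{m}}\|\le M_T$ from \eqref{e_stability2}, $|\hat{\textbf{m}}^n/|\hat{\textbf{m}}^n||=1$, and interpolation, $\|B_l(\textbf{m}^n)\|_{L^\infty}\le C$. Then Young's inequality with constant $\frac{1}{2}(1+l\tau_l)$ yields precisely the threshold $\gamma > \frac{(1+\tau_l+l\tau_l)}{2(1-\tau_l)}|\beta|$ so that the $\|\Delta\tilde{e}_{\textbf{m}}^{n+1}\|^2$ term is absorbed by the $H^2$ dissipation coming from the Laplacian. The cubic nonlinearity $|\nabla \textbf{m}|^2\textbf{m}$ I would expand as a trilinear difference and bound with the interpolation Lemma \ref{lem: Interpolation inequalities} together with the a~priori bound $\|\nabla\hat{\textbf{m}}\|\le M_T$; this contributes terms of the form $C\|\nabla\tilde{e}_{\textbf{m}}\|\|\Delta\tilde{e}_{\textbf{m}}\|$ that are absorbable.

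\textbf{Step 4: From $\tilde{\textbf{m}}^{n+1}$ to $\textbf{m}^{n+1}$, and the $R$-error.} The normalization step gives $\textbf{m}^{n+1}=\hat{\textbf{m}}^{n+1}/|\hat{\textbf{m}}^{n+1}|$ with $\hat{\textbf{m}}^{n+1}=\eta_l^{n+1}\tilde{\textbf{m}}^{n+1}+(\eta_l^{n+1}-1)^w$. Writing $\eta_l^{n+1}-1=-(1-\xi^{n+1})^l$ and $\xi^{n+1}-1=-e_R^{n+1}/(E(\tilde{\textbf{m}}^{n+1})+K_0)$, it suffices to control $e_R^{n+1}$. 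Subtracting \eqref{e_energy dissipative law} from \eqref{e_High-order2} gives
\begin{equation*}
\frac{e_R^{n+1}-e_R^n}{\Delta t} = -\gamma\bigl(\xi^{n+1}\|B_l(\textbf{m}^n)\times\Delta\tilde{\textbf{m}}^{n+1}\|^2 - \|\textbf{m}(t^{n+1})\times\Delta\textbf{m}(t^{n+1})\|^2\bigr) + O((\Delta t)^l),
\end{equation*}
whose right-hand side is controlled by $|\tilde{e}_{\textbf{m}}|_{H^2}$ and $|e_R|$ times the uniformly bounded flux from \eqref{e_stability2}. Since $\eta_l^{n+1}=1-(1-\xi^{n+1})^l$, a factor $|\xi^{n+1}-1|^l$ appears in $\|\hat{\textbf{m}}^{n+1}-\tilde{\textbf{m}}^{n+1}\|_{H^1}$, so the projection onto the unit sphere adds only a higher-order perturbation when $\textbf{m}(t^{n+1})$ has unit length.

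\textbf{Step 5: Closing via discrete Gronwall.} Combining Steps 2--4 gives an inequality of the form
\begin{equation*}
a_{n+1}-a_n + b_{n+1}\Delta t \le C\,\Delta t\,a_n + C(\Delta t)^{2l+1},
\end{equation*}
with $a_n$ the $G$-norm quadratic form plus $\|\nabla\tilde{e}_{\textbf{m}}^n\|^2+|e_R^n|^2$ and $b_{n+1}\sim\|\Delta\tilde{e}_{\textbf{m}}^{n+1}\|^2$. Lemma \ref{lem: gronwall2} yields the desired $O((\Delta t)^{2l})$ bound.

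\textbf{Main obstacle.} The delicate point is that the explicit treatment of the $\beta$-cross-product requires controlling $\|\Delta B_l(\tilde{e}_{\textbf{m}}^n)\|$ by the $l^2(H^2)$ dissipation on the \emph{current} step, which produces the sharp threshold condition on $\gamma$ and involves the combinatorial factor $1+l\tau_l$. A secondary subtlety is that $\tilde{e}_{\textbf{m}}$ and $e_{\textbf{m}}$ differ through the normalization: one must argue that, because $|\textbf{m}(t^{n+1})|=1$, the projection is Lipschitz near $\tilde{\textbf{m}}^{n+1}$ with Lipschitz constant bounded uniformly in $n$ by Theorem \ref{thm_energy stability_high order}, so that $e_{\textbf{m}}$ and $\tilde{e}_{\textbf{m}}$ are comparable up to an $O((\Delta t)^l)$ error driven by $|e_R|$.
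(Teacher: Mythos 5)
Your proposal follows the same route as the paper: the error equation tested against $-\Delta t\,(\Delta\tilde e_{\textbf{m}}^{k+1}-\tau_l\Delta\tilde e_{\textbf{m}}^{k})$ with the Nevanlinna--Odeh multiplier of Lemma \ref{lem: multiplier step}, a decomposition of the $\beta$ cross-product term that produces the threshold \eqref{e_beta_value1}, comparison of $e_{\textbf{m}}$, $\hat e_{\textbf{m}}$, $\tilde e_{\textbf{m}}$ through the Lipschitz property of the normalization, an error equation for $R$ to control $|1-\xi^{n+1}|$, and the discrete Gronwall lemma. The structure and the key ingredients match (the paper obtains the $L^2$ estimate from Poincar\'e rather than a second testing, which is immaterial).

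There is, however, one concrete gap. You assert that the cubic nonlinearity $|\nabla\textbf{m}|^2\textbf{m}$ ``contributes terms of the form $C\|\nabla\tilde e_{\textbf{m}}\|\,\|\Delta\tilde e_{\textbf{m}}\|$ that are absorbable,'' citing only the unconditional bound $\|\nabla\hat{\textbf{m}}^{n}\|\le M_T$ from \eqref{e_stability2}. The quadratic-in-error piece $\big(|\nabla B_l(e_{\textbf{m}}^{k})|^2 B_l(\textbf{m}^{k}),\Delta\tilde e_{\textbf{m}}^{k+1}-\tau_l\Delta\tilde e_{\textbf{m}}^{k}\big)$ is controlled by $\|\nabla B_l(e_{\textbf{m}}^{k})\|_{L^4}^2\,\|\Delta\tilde e_{\textbf{m}}^{k+1}-\tau_l\Delta\tilde e_{\textbf{m}}^{k}\|$, which after interpolation behaves like $\|\nabla B_l(\tilde e_{\textbf{m}}^{k})\|^{1/2}\|\Delta B_l(\tilde e_{\textbf{m}}^{k})\|^{3/2}\|\Delta\tilde e_{\textbf{m}}^{k+1}\|$: it is cubic in the errors and carries a power $3/2$ of $\|\Delta\tilde e_{\textbf{m}}^{k}\|$, so it cannot be absorbed by the dissipation via Young's inequality alone, and $\|\nabla\hat{\textbf{m}}\|\le M_T$ gives no control of $\|\Delta\tilde e_{\textbf{m}}^{k}\|$. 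The paper closes this by running the induction on two \emph{explicit} hypotheses, $|1-\xi^{k}|\le C_0\Delta t$ and $\|\tilde e_{\textbf{m}}^{k}\|_{H^2}\le(\Delta t)^{1/4}$; the latter supplies the extra factor $(\Delta t)^{1/4}$ that reduces $\|\Delta B_l(\tilde e_{\textbf{m}}^{k})\|^{3/2}$ to $(\Delta t)^{1/4}\|\Delta B_l(\tilde e_{\textbf{m}}^{k})\|^{1/2}$ before absorbing, and the former is what is re-established in Step 3 with a careful choice of $C_0$ and the restriction $\Delta t\le(1+2^{q+2}C_0^q)^{-1}$. Your generic invocation of a ``bootstrap'' must be made precise in exactly this form or the $H^2$ estimate does not close. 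A smaller point: for $l=1$ the scheme takes $\eta_1^{n+1}=1-(1-\xi^{n+1})^2$ (exponent $2$, not $1$), i.e.\ $q=2$, which is what makes the projection perturbation $O((C_0\Delta t)^2)$ and allows $C_0$ to be chosen independently of $\Delta t$ in the first-order case; your formula $\eta_l^{n+1}-1=-(1-\xi^{n+1})^l$ misses this.
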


\begin{proof}
We shall first prove, by an induction process with a bootstrap argument, that there exists a positive constant $C_0$  such that
\begin{equation}\label{e_error1}
\aligned
| 1- \xi^k | \leq C_0 \Delta t, \ \forall k \leq T/ \Delta t,
\endaligned
\end{equation}
\begin{equation}\label{e_error2}
\aligned
\| \tilde{e}_{\textbf{m}}^{k}  \|_{H^2}  \leq (\Delta t)^{1/4} , \ \forall k \leq T/ \Delta t.
\endaligned
\end{equation}

Obviously \eqref{e_error1} and \eqref{e_error2} hold for $k=0$. Now we suppose 
\begin{equation}\label{e_error3}
\aligned
| 1- \xi^k | \leq C_0 \Delta t, \ \forall k \leq n,
\endaligned
\end{equation}
\begin{equation}\label{e_error4}
\aligned
\| \tilde{e}_{\textbf{m}}^{k}  \|_{H^2} \leq (\Delta t)^{1/4} , \ \forall k \leq n,
\endaligned
\end{equation}
and  prove below that  $| 1- \xi^{n+1} | \leq C_0 \Delta t$ and $ \| \tilde{e}_{\textbf{m}}^{n+1}  \|_{H^2}  \leq (\Delta t)^{ 1/4}  $ hold true. We shall carry out the induction proof in three steps below.

\noindent{\bf Step 1:  A $H^2$ bound for $\tilde{e}_{\textbf{m}}^{n+1}$.} 
Following a similar procedure as in \cite{huang2021stability} and recalling the inequality
\begin{equation}\label{e_error_akbk}
\aligned
( a+b )^l \leq 2^{l-1} (a^l +b^l),\  \forall a,b>0, l\geq 1,
\endaligned
\end{equation} 
we can easily obtain from the induction assumption  that  under the condition $\Delta t \leq \min\{ \frac{1}{2^{l+2} C_0^l },1\}$, we have
\begin{equation}\label{e_error5}
\aligned
\frac{1}{2} \leq | \xi^k |, \  | \eta^k_l |  \leq 2, \quad\forall k\le n.
\endaligned
\end{equation}

 We first derive a useful relation between the error functions $e_{\textbf{m}}^{k} $,  $ \hat{e}_{\textbf{m}}^{k} $ and $ \tilde{e}_{\textbf{m}}^{k} $  in order to bound the nonlinear terms in the following analysis.
\begin{lemma}\label{lem: relation for error functions}
 Assuming that \eqref{e_error3} and \eqref{e_error4} hold for all $1 \leq k \leq n$, $2 \leq p \leq 4$ and $1 \leq l \leq 5$, then we have 
\begin{equation}\label{e_relation_L2}
\aligned
& \| e_{\textbf{m}}^{k} \|_{W^{1,p}} \leq C \| \hat{e}_{\textbf{m}}^{k} \|_{W^{1,p}} \leq C \| \tilde{e}_{\textbf{m}}^{k} \|_{W^{1,p}} + C (C_0 \Delta t)^q, \quad 1 \leq k \leq n,
\endaligned
\end{equation}
where $C$ is a positive constant  independent of $\Delta t$ and $C_0$, and $q=2$ when $l=1$, and $q=l$ when $l=2,3,4,5$ respectively.
\end{lemma}

\begin{proof}
Assuming that \eqref{e_error4} holds for all $1 \leq k \leq n$, we have
\begin{equation}\label{e_lem_relation1}
\aligned
& \| \tilde{e}_{\textbf{m}}^{k} \|_{ L^{\infty} } \leq C \| \tilde{e}_{\textbf{m}}^{k} \|_{H^1}^{1/2}  \| \tilde{e}_{\textbf{m}}^{k} \|_{H^2}^{1/2} \leq C_1 (\Delta t)^{ l-5/8 } \leq \frac{1}{8 }, 
\endaligned
\end{equation}
under the condition  $\Delta t \leq  (\frac{1 }{ 8 C_1} )^{4 }$. By using \eqref{e_High-order_eta}, \eqref{e_error3} and \eqref{e_error4}, we have 
\begin{equation}\label{e_lem_relation2}
\aligned
 \| \hat{e}_{\textbf{m}}^{k} \|_{ L^{\infty} } \leq & \| \textbf{m}(t^{k}) -   \tilde{\textbf{m}}^{k} \| _{ L^{\infty} }+ \| \tilde{\textbf{m}}^{k} - \hat{\textbf{m}}^{k} \|_{ L^{\infty} }
\leq  \| \tilde{e}_{\textbf{m}}^{k} \|_{ L^{\infty} } + | 1- \eta^k_l  | ( \| \tilde{\textbf{m}}^{k} \|_{ L^{\infty} } + | 1- \eta^k_l |^{w-1} ) \\
\leq &  \| \tilde{e}_{\textbf{m}}^{k} \| _{ L^{\infty} }+ | 1- \xi^k |^q ( \| \tilde{\textbf{m}}^{k} \|_{ L^{\infty} } + | 1- \eta^k_l |^{w-1}  ) \\
 \leq & \frac{1}{8} + ( \| \tilde{\textbf{m}}^{k} \|_{ L^{\infty} } + | 1- \eta^k_l |^{w-1} )  ( C_0  \Delta t )^q \leq \frac{1}{4},
 \endaligned
\end{equation}
where $\Delta t \leq \frac{1}{  17^{1/q} C_0 }$. Here $q=2$ when $l=1$, and $q=l$ when $l=2,3,4,5$.
Then point-wise in $\Omega$, we have 
\begin{equation}\label{e_lem_relation3}
\aligned
 \frac{1}{2} \leq | \hat{\textbf{m}}^{k} | \leq \frac{3}{2} . 
 \endaligned
\end{equation}
By using exactly the same procedure as \cite{gui2022convergence,akrivis2021higher}, there holds
\begin{equation}\label{e_lem_relation4}
\aligned
| \frac{f}{ |f| } -  \frac{g }{ |g | } | = | \frac{ f ( |g| -|f |) -|f |( g -f ) }{ |f| |g|} | \leq 2 \frac{ | f-g |}{ |g |} \leq C | f-g |,
 \endaligned
\end{equation}
where the constant $C$ depends on $|g|$, and similarly we have
\begin{equation}\label{e_lem_relation5}
\aligned
| \nabla \frac{f}{ |f| } -  \nabla \frac{g }{ |g | } | \leq C |\nabla g| |f-g| +C | \nabla (f-g)|. 
 \endaligned
\end{equation}
Thus we have
\begin{equation}\label{e_lem_relation6}
\aligned
\| e_{\textbf{m}}^{k} \|_{W^{1,p}} =  \| \textbf{m}(t^{k}) -   \textbf{m}^{k} \|_{W^{1,p}} =  \| \frac{ \textbf{m}(t^{k}) } { | \textbf{m}(t^{k}) | } -  \frac{ \hat{\textbf{m}}^{k} }{ |\hat{\textbf{m}}^{k} |} \| _{W^{1,p}} \leq C\| \hat{e}_{\textbf{m}}^{k} \|_{W^{1,p}} .
 \endaligned
\end{equation}
Similarly as \eqref{e_lem_relation2} and using \eqref{e_error4}, we can obtain
\begin{equation*}\label{e_lem_relation7}
\aligned
 \| \hat{e}_{\textbf{m}}^{k} \|_{W^{1,p}} \leq & \| \textbf{m}(t^{k}) -   \tilde{\textbf{m}}^{k} \|_{W^{1,p}} + \| \tilde{\textbf{m}}^{k} - \hat{\textbf{m}}^{k} \|_{W^{1,p}} \\
\leq &  \| \tilde{e}_{\textbf{m}}^{k} \|_{W^{1,p}} + | 1- \eta^k_l | ( \| \tilde{\textbf{m}}^{k} \|_{W^{1,p}} +  | 1- \eta^k_l |^{w-1} ) \\
\leq &  \| \tilde{e}_{\textbf{m}}^{k} \|_{W^{1,p}}+ | 1- \xi^k |^q  ( \| \tilde{\textbf{m}}^{k} \|_{W^{1,p}} +  | 1- \eta^k_l |^{w-1} ) \\
 \leq & \| \tilde{e}_{\textbf{m}}^{k} \| _{W^{1,p}} + C (C_0 \Delta t)^q.
 \endaligned
\end{equation*}
The proof is complete.
\end{proof}
 
We now continue with the proof of Theorem \ref{the: error_estimate_final}. Let $\textbf{R}_{\textbf{m}, l }^{n+1}$ be the truncation error defined by
\begin{equation}\label{e_error6}
\aligned
\textbf{R}_{\textbf{m},l }^{k+1}=\frac{\partial \textbf{m}(t^{k+1})}{\partial t} - D_l \textbf{m}(t^{k+1}) = \frac{1}{\Delta t} \sum\limits_{i=1}^l \delta_i \int_{t^{k+1-i}}^{t^{k+1}}(t^{k+1-i}-t)^l  \frac{\partial^{l+1} \textbf{m}}{\partial t^{l+1} }dt,
\endaligned
\end{equation}
with $ \delta_i $ being some fixed and bounded constants determined by the truncation errors.

Subtracting \eqref{e_High-order1}  from \eqref{e_model_transform1} at $t^{k+1}$, we obtain an error equation  corresponding to \eqref{e_High-order1}:
\begin{equation}\label{e_error7}
\aligned
 D_l  \tilde{e}_{\textbf{m}}^{k+1} -  \gamma \Delta \tilde{e}_{\textbf{m}}^{k+1}
= & -  \gamma \left( |\nabla B_l( \textbf{m}^{k} ) |^2 B_l( \textbf{m}^{k} ) -   |\nabla \textbf{m}( t^{k+1} ) |^2 \textbf{m}( t^{k+1} ) \right) - \textbf{R}_{\textbf{m},l }^{k+1} \\
& - \beta \left(  \textbf{m} (t^{k+1}) \times  \Delta\textbf{m} (t^{k+1}) - B_l( \textbf{m}^{k} )  \times  \Delta \tilde{ \textbf{m}}^{k+1} \right).
\endaligned
\end{equation}

Taking the inner product of \eqref{e_error7} with $ - \Delta t ( \Delta  \tilde{e}_{\textbf{m}}^{k+1} - \tau_l \Delta  \tilde{e}_{\textbf{m}}^{k} )$ and using Lemma \ref{lem: multiplier step}, we obtain
\begin{align}
& \sum\limits_{i,j=1}^l g_{i,j}
( \nabla \tilde{e}_{\textbf{m}}^{k+1+i-l } ,  \nabla \tilde{e}_{\textbf{m}}^{k+1+j-l } ) - \sum\limits_{i,j=1}^l g_{i,j}
( \nabla \tilde{e}_{\textbf{m}}^{k+i-l } ,  \nabla \tilde{e}_{\textbf{m}}^{k+j-l } ) \notag \\
& + \|  \sum\limits_{i=0}^l  \xi_i \nabla \tilde{e}_{\textbf{m}}^{k+1+i-l} \|^2 +  \Delta t  \gamma \| \Delta \tilde{e}_{\textbf{m}}^{k+1} \|^2 \label{e_error8}  \\
= &  \Delta t \gamma \left(  |\nabla B_l( \textbf{m}^{k} ) |^2 -   |\nabla B_l( \textbf{m}( t^{k} ) ) |^2 ,  B_l( \textbf{m}^{k} ) ( \Delta  \tilde{e}_{\textbf{m}}^{k+1} - \tau_l \Delta  \tilde{e}_{\textbf{m}}^{k} ) \right) \notag \\
& -  \Delta t \gamma \left(  |\nabla B_l( \textbf{m}( t^{k} ) ) |^2 B_l ( e_{\textbf{m}}^{k} ),   \Delta  \tilde{e}_{\textbf{m}}^{k+1} -  \tau_l \Delta  \tilde{e}_{\textbf{m}}^{k} \right) \notag \\
&+  \Delta t \gamma \left(  |\nabla B_l( \textbf{m}( t^{k} ) ) |^2 B_l( \textbf{m}( t^{k} ) ) -  |\nabla \textbf{m}( t^{k+1} ) |^2 \textbf{m}( t^{k+1} ) ,  \Delta  \tilde{e}_{\textbf{m}}^{k+1}  - \tau_l \Delta  \tilde{e}_{\textbf{m}}^{k} ) \right) \notag \\
& +  \Delta t \gamma (\Delta \tilde{e}_{\textbf{m}}^{k+1},  \tau_l \Delta  \tilde{e}_{\textbf{m}}^{k} )  +  \Delta t ( \textbf{R}_{\textbf{m},l}^{k+1}, \Delta  \tilde{e}_{\textbf{m}}^{k+1}  - \tau_l \Delta  \tilde{e}_{\textbf{m}}^{k} ) )  \notag \\
& + \textcolor{black}{ \Delta t \beta \left( \left(  \textbf{m} (t^{k+1}) \times  \Delta\textbf{m} (t^{k+1}) - B_l( \textbf{m}^{k} )  \times  \Delta B_l ( \tilde{\textbf{m}}^{k} ) \right), 
 \Delta  \tilde{e}_{\textbf{m}}^{k+1} - \tau_l \Delta  \tilde{e}_{\textbf{m}}^{k} \right) } \notag \\
= &\Delta t( I_1+ I_2 + I_3 + I_4+I_5 + I_6 ).  \notag
\end{align}
We bound the above six terms as follows. 
Using the H\"older inequality in Lemma \ref{lem: Holder inequality} and Lemma \ref{lem: relation for error functions}, we have 
\begin{align}
 | I_1 |  = & \gamma \left(  |\nabla B_l( \textbf{m}^{k} ) |^2 -   |\nabla B_l( \textbf{m}( t^{k} ) ) |^2 ,  B_l( \textbf{m}^{k} ) ( \Delta  \tilde{e}_{\textbf{m}}^{k+1} - \tau_l \Delta  \tilde{e}_{\textbf{m}}^{k} ) \right) \notag \\
= &  \gamma \left( \nabla B_l( e_{\textbf{m}}^{k} ) \nabla B_l( e_{\textbf{m}}^{k} )  , B_l( \textbf{m}^{k} ) ( \Delta  \tilde{e}_{\textbf{m}}^{k+1} - \tau_l \Delta  \tilde{e}_{\textbf{m}}^{k} )  \right) \notag  \\
& - 2 \gamma \left(  \nabla B_l( \textbf{m}( t^{k} ) ) \nabla B_l( e_{\textbf{m}}^{k} ) ,  B_l( \textbf{m}^{k} ) ( \Delta  \tilde{e}_{\textbf{m}}^{k+1} 
- \tau_l \Delta  \tilde{e}_{\textbf{m}}^{k} ) \right) \notag  \\
\leq & \gamma \| \nabla B_l( e_{\textbf{m}}^{k} ) \|_{L^4}  \| \nabla B_l( e_{\textbf{m}}^{k} ) \|_{L^4} \| B_l( \textbf{m}^{k} ) \Delta  \tilde{e}_{\textbf{m}}^{k+1} - \tau_l \Delta  \tilde{e}_{\textbf{m}}^{k} )  \|_{L^2} \label{e_error9_A} \\
& + 2  \gamma \| \nabla B_l( \textbf{m}( t^{k} ) ) \|_{L^6} \| \nabla B_l( e_{\textbf{m}}^{k}  ) \|_{L^3}  \| B_l( \textbf{m}^{k} ) \Delta  \tilde{e}_{\textbf{m}}^{k+1} - \tau_l \Delta  \tilde{e}_{\textbf{m}}^{k} )  \|_{L^2} \notag  \\
\leq &C \left( \|  \nabla B_l( \tilde{e}_{\textbf{m}}^{k} )  \|_{L^4} +  C (C_0 \Delta t)^q \right)^2 \| B_l( \textbf{m}^{k} ) \Delta  \tilde{e}_{\textbf{m}}^{k+1} - \tau_l \Delta  \tilde{e}_{\textbf{m}}^{k} )  \|_{L^2} \notag  \\
& + 2 C \| \nabla B_l( \textbf{m}( t^{k} ) ) \|_{L^6} ( \| \nabla B_l( \tilde{e}_{\textbf{m}}^{k} )  \|_{L^3} +  C (C_0 \Delta t)^q ) \| B_l( \textbf{m}^{k} ) \Delta  \tilde{e}_{\textbf{m}}^{k+1} - \tau_l \Delta  \tilde{e}_{\textbf{m}}^{k} )  \|_{L^2}. \notag 
\end{align}
Applying the interpolation inequality \eqref{e_Preliminaries2} and Cauchy-Schwarz inequality, the above term can be estimated by
\begin{align}
 | I_1 |  
\leq & C \| \nabla B_l( \tilde{e}_{\textbf{m}}^{k} )  \|_{L^2}^{1/2}   \| \Delta B_l( \tilde{e}_{\textbf{m}}^{k} ) \|_{L^2}^{3/2} \| B_l( \textbf{m}^{k})(  \Delta  \tilde{e}_{\textbf{m}}^{k+1} - \tau_l \Delta  \tilde{e}_{\textbf{m}}^{k} ) \|_{L^2} \notag \\
&+ C  \| \nabla B_l( \textbf{m}( t^{k} ) ) \|_{L^6} \| \nabla B_l( \tilde{e}_{\textbf{m}}^{k} ) \|_{L^2}^{1/2}   \| \Delta B_l( \tilde{e}_{\textbf{m}}^{k} )\|_{L^2}^{1/2}  \| B_l( \textbf{m}^{k})(  \Delta  \tilde{e}_{\textbf{m}}^{k+1} - \tau_l \Delta  \tilde{e}_{\textbf{m}}^{k} ) \|_{L^2} \notag  \\
& + C  ( C_0 \Delta t )^q  ( ( C_0 \Delta t )^q+ \| \nabla B_l( \textbf{m}( t^{k} ) ) \|_{L^6} + \|  \nabla B_l( \tilde{e}_{\textbf{m}}^{k} )  \|_{L^4} ) \| B_l( \textbf{m}^{k})(  \Delta  \tilde{e}_{\textbf{m}}^{k+1} - \tau_l \Delta  \tilde{e}_{\textbf{m}}^{k} ) \|_{L^2} \notag  \\
\leq & C  ( \Delta t )^{1/4} \| \nabla B_l( \tilde{e}_{\textbf{m}}^{k} ) \|_{L^2}^{1/2}   \| \Delta B_l( \tilde{e}_{\textbf{m}}^{k} ) \|_{L^2}^{1/2} \|  B_l( \textbf{m}^{k})(  \Delta  \tilde{e}_{\textbf{m}}^{k+1} - \tau_l \Delta  \tilde{e}_{\textbf{m}}^{k} ) \|_{L^2} \label{e_error9} \\
& + C \| \nabla B_l( \textbf{m}( t^{k} ) ) \|_{L^6} \| \nabla B_l( \tilde{e}_{\textbf{m}}^{k} ) \|_{L^2}^{1/2}   \| \Delta B_l( \tilde{e}_{\textbf{m}}^{k} ) \|_{L^2}^{1/2}  \| B_l( \textbf{m}^{k})(  \Delta  \tilde{e}_{\textbf{m}}^{k+1} - \tau_l \Delta  \tilde{e}_{\textbf{m}}^{k} ) \|_{L^2} \notag  \\
& + C  ( C_0 \Delta t )^q  ( ( C_0 \Delta t )^q+ \| \nabla B_l( \textbf{m}( t^{k} ) ) \|_{L^6} + \|  \nabla B_l( \tilde{e}_{\textbf{m}}^{k} )  ) \| B_l( \textbf{m}^{k})(  \Delta  \tilde{e}_{\textbf{m}}^{k+1} - \tau_l \Delta  \tilde{e}_{\textbf{m}}^{k} ) \|_{L^2}  \notag  \\
\leq &  \epsilon  \|  \Delta  \tilde{e}_{\textbf{m}}^{k+1} \|_{L^2}^2 +  \frac{ \epsilon   }{ l }   \sum\limits_{i=0}^{l-1}  \| \Delta \tilde{e}_{\textbf{m}}^{k-i} \|_{L^2}^2  + C  \|  \nabla  B_l( \tilde{e}_{\textbf{m}}^{k} ) \|_{L^2}^2 + C( C_0 \Delta t )^{2q} , \notag 
\end{align}
where $\epsilon$ is  an arbitrarily small positive constant which is independent of $\Delta t$ and will be determined below.
Using the H\"older  inequality in Lemma \ref{lem: Holder inequality} , the second term on the right\textcolor{black}{-}hand side of \eqref{e_error8} can be bounded by 
 \begin{equation}\label{e_error10}
\aligned
 | I_2 |  = &  \gamma |  \left(  | \nabla B_l( \textbf{m}( t^{k} ) ) |^2 B_l ( e_{\textbf{m}}^{k} ),   \Delta  \tilde{e}_{\textbf{m}}^{k+1} -  \tau_l \Delta  \tilde{e}_{\textbf{m}}^{k} \right)  |  \\
\leq & \gamma \| ( \nabla B_l( \textbf{m}( t^{k} ) ) )^2  \|_{L^3} \| B_l( e_{\textbf{m}}^{k} ) \|_{L^6} \|  \Delta  \tilde{e}_{\textbf{m}}^{k+1} -  \tau_l \Delta  \tilde{e}_{\textbf{m}}^{k} \|_{L^2} \\
\leq &  \gamma \| \nabla B_l( \textbf{m}( t^{k} ) ) \|_{L^6}^2 \| B_l( e_{\textbf{m}}^{k} ) \|_{L^6} \|  \Delta  \tilde{e}_{\textbf{m}}^{k+1} -  \tau_l \Delta  \tilde{e}_{\textbf{m}}^{k} \|_{L^2} \\
\leq & \epsilon  \|  \Delta  \tilde{e}_{\textbf{m}}^{k+1} \|_{L^2}^2 +  \epsilon \|  \Delta  \tilde{e}_{\textbf{m}}^{k} \|_{L^2}^2 + C  \| \nabla B_l(  \textbf{m}( t^{k} )  ) \|_{L^6}^4 \| \nabla B_l( \tilde{e}_{\textbf{m}}^{k} ) \|^2 _{L^2} + C( C_0 \Delta t )^{2q}.
\endaligned
\end{equation} 
For the third term on the right\textcolor{black}{-}hand side of \eqref{e_error8}, we have
 \begin{equation}\label{e_error11_I3}
\aligned
  | I_3 |  =&  \gamma \left(  |\nabla B_l( \textbf{m}( t^{k} ) ) |^2 B_l( \textbf{m}( t^{k} ) ) -  |\nabla \textbf{m}( t^{k+1} ) |^2 \textbf{m}( t^{k+1} ) ,  \Delta  \tilde{e}_{\textbf{m}}^{k+1}  - \tau_l \Delta  \tilde{e}_{\textbf{m}}^{k} ) \right) \\
= &  \gamma \left(  |\nabla B_l( \textbf{m}( t^{k} ) ) |^2 ( B_l( \textbf{m}( t^{k} ) ) - \textbf{m}( t^{k+1} ) ),   \Delta  \tilde{e}_{\textbf{m}}^{k+1}  - \tau_l \Delta  \tilde{e}_{\textbf{m}}^{k} )\right) \\
& + \gamma \left(  |\nabla B_l( \textbf{m}( t^{k} ) ) |^2 -  |\nabla \textbf{m}( t^{k+1} ) |^2  ,   \textbf{m}( t^{k+1} ) (\Delta  \tilde{e}_{\textbf{m}}^{k+1}  - \tau_l \Delta  \tilde{e}_{\textbf{m}}^{k} )  \right).
\endaligned
\end{equation}
Thus using the H\"older  inequality, the above term can be bounded by
 \begin{equation}\label{e_error11}
\aligned
  | I_3 |  \leq &  \epsilon \|  \Delta  \tilde{e}_{\textbf{m}}^{k+1} \|_{L^2}^2 +  \epsilon \|  \Delta  \tilde{e}_{\textbf{m}}^{k} \|_{L^2}^2  
 + C  \| \nabla B_l( \textbf{m}( t^{k} ) ) \|_{L^6}^2 \|  \sum\limits_{i=1}^l d_i \int_{ k+1-i}^{k+1} ( t^{ k+1-i} -s )^{l-1} \frac{ \partial ^{l} \textbf{m} }{\partial t ^l } (s) ds \|_{L^3} ^2  \\
& + C  ( \| \nabla B_l ( \textbf{m}( t^{k} ) )  \|_{L^6}^2 +  \| \nabla \textbf{m}( t^{k+1} ) \|_{L^6}^2  )
\|  \sum\limits_{i=1}^l d_i \int_{ k+1-i}^{k+1} ( t^{ k+1-i} -s )^{l-1} \frac{ \partial ^{l} \nabla \textbf{m} }{\partial t ^l } (s) ds \|_{L^3} ^2 \\
\leq  & \epsilon  \|  \Delta  \tilde{e}_{\textbf{m}}^{k+1} \|_{L^2}^2 + \epsilon  \|  \Delta  \tilde{e}_{\textbf{m}}^{k} \|_{L^2}^2  
+ + C ( \Delta t )^{2l-1}   \| \nabla \textbf{m}( t^{k+1} ) \|_{L^6}^2  \int_{ k+1-l }^{k+1} 
\|  \frac{ \partial ^{l} \textbf{m} }{\partial t ^l }(s) \|_{W^{1,3}}^2 ds  \\
& + C ( \Delta t )^{2l-1}  \| \nabla B_l ( \textbf{m}( t^{k} ) )  \|_{L^6}^2  \int_{ k+1-l }^{k+1} 
\|  \frac{ \partial ^{l} \textbf{m} }{\partial t ^l }(s) \|_{W^{1,3}}^2 ds ,
\endaligned
\end{equation}
where $ d_i $ with $i=1,2,\ldots,4,5$  are some fixed and bounded constants determined by the truncation errors. 

For  the fourth term on the right hand side of  \eqref{e_error8}, we have
  \begin{equation}\label{e_error12}
\aligned
  | I_4 |  =  \gamma | (\Delta \tilde{e}_{\textbf{m}}^{k+1},  \tau_l \Delta  \tilde{e}_{\textbf{m}}^{k} ) | \leq \frac{\gamma  \tau_l  }{2} |  \Delta  \tilde{e}_{\textbf{m}}^{k+1} \|_{L^2}^2 + \frac{ \gamma \tau_l }{2} |  \Delta  \tilde{e}_{\textbf{m}}^{k} \|_{L^2}^2.
\endaligned
\end{equation}
Recalling \eqref{e_error6}, the fifth term on the right hand side of  \eqref{e_error8} can be controlled by
  \begin{equation}\label{e_error12_R}
\aligned
  | I_5 |  = & | ( \textbf{R}_{\textbf{m},l}^{k+1}, \Delta  \tilde{e}_{\textbf{m}}^{k+1}  - \tau_l \Delta  \tilde{e}_{\textbf{m}}^{k} ) ) | \\
  \leq &   \epsilon  \|  \Delta  \tilde{e}_{\textbf{m}}^{k+1} \|_{L^2}^2 +  \epsilon  \|  \Delta  \tilde{e}_{\textbf{m}}^{k} \|_{L^2}^2  
   + C (\Delta t)^{2l-1}  \int_{ k+1-l }^{k+1} \|  \frac{ \partial ^{l+1} \textbf{m} }{\partial t ^{l+1} }(s) \|_{L^2}^2 ds. 
\endaligned
\end{equation}
\textcolor{black}{ 
The last term on the right hand side of  \eqref{e_error8} can be estimated by
 \begin{equation}\label{e_error_beta1}
\aligned
 | I_6 | = & | \beta | \left|  \left(   \textbf{m} (t^{k+1}) \times  \Delta\textbf{m} (t^{k+1}) - B_l( \textbf{m}^{k} )  \times \Delta B_l ( \tilde{\textbf{m}}^{k} ) , 
 \Delta  \tilde{e}_{\textbf{m}}^{k+1} - \tau_l \Delta  \tilde{e}_{\textbf{m}}^{k} \right) \right| \\
= & | \beta | \left|  \left(  \textbf{m} (t^{k+1}) \times ( \Delta \textbf{m} (t^{k+1}) - \Delta B_l( \textbf{m}(t^k) ) ) , \Delta  \tilde{e}_{\textbf{m}}^{k+1} - \tau_l \Delta  \tilde{e}_{\textbf{m}}^{k} \right) \right| \\
& +  | \beta | \left|  \left(  B_l( e_{\textbf{m}}^{k} ) \times \Delta B_l ( \tilde{\textbf{m}}^{k} )  , \Delta  \tilde{e}_{\textbf{m}}^{k+1} - \tau_l \Delta  \tilde{e}_{\textbf{m}}^{k} \right) \right| \\
& + | \beta | \left|  \left(   \textbf{m} (t^{k+1}) \times  \Delta B_l( \tilde{e}_{\textbf{m}}^{k} ) , \Delta  \tilde{e}_{\textbf{m}}^{k+1} - \tau_l \Delta  \tilde{e}_{\textbf{m}}^{k} \right) \right| \\
& + | \beta | \left|  \left(  ( \textbf{m} (t^{k+1}) - B_l( \textbf{m}(t^k) ) ) \times \Delta B_l ( \tilde{\textbf{m}}^{k} )  , \Delta  \tilde{e}_{\textbf{m}}^{k+1} - \tau_l \Delta  \tilde{e}_{\textbf{m}}^{k} \right) \right|.
\endaligned
\end{equation} 
Applying Cauchy-Schwarz inequality, the first term on the right hand side of \eqref{e_error_beta1} can be estimated by
 \begin{equation}\label{e_error_beta2}
\aligned
& | \beta | \left|  \left(  \textbf{m} (t^{k+1}) \times ( \Delta \textbf{m} (t^{k+1}) - \Delta B_l( \textbf{m}(t^k) ) ) , \Delta  \tilde{e}_{\textbf{m}}^{k+1} - \tau_l \Delta  \tilde{e}_{\textbf{m}}^{k} \right) \right|  \\
\leq & | \beta | \|  \sum\limits_{i=1}^l d_i \int_{ k+1-i}^{k+1} ( t^{ k+1-i} -s )^{l-1} \frac{ \partial ^{l} \Delta \textbf{m} }{\partial t ^l } (s) ds \|_{L^{2}} \| \textbf{m} (t^{k+1}) \|_{L^{\infty} } \|  \Delta  \tilde{e}_{\textbf{m}}^{k+1} - \tau_l \Delta  \tilde{e}_{\textbf{m}}^{k} \|_{L^2} \\
\leq &   \epsilon  \|  \Delta  \tilde{e}_{\textbf{m}}^{k+1} \|_{L^2}^2 +
\epsilon  \|  \Delta  \tilde{e}_{\textbf{m}}^{k} \|_{L^2}^2  +  C ( \Delta t )^{2l-1} \int_{ k+1-l }^{k+1} 
\|  \frac{ \partial ^{l} \textbf{m} }{\partial t ^l }(s) \|_{H^2}^2 ds .
\endaligned
\end{equation} 
Applying the interpolation inequality \eqref{e_Preliminaries3}, \eqref{e_High-order_correct} and \eqref{e_error4},  the second term on the right hand side of \eqref{e_error_beta1} can be bounded by
 \begin{equation}\label{e_error_beta3}
\aligned
& | \beta | \left|  \left(  B_l( e_{\textbf{m}}^{k} ) \times \Delta B_l ( \tilde{\textbf{m}}^{k} )  , \Delta  \tilde{e}_{\textbf{m}}^{k+1} - \tau_l \Delta  \tilde{e}_{\textbf{m}}^{k} \right) \right|  \\
\leq & \beta \| B_l( e_{\textbf{m}}^{k} ) \|_{L^{\infty}} \| \Delta B_l ( \tilde{\textbf{m}}^{k} ) \|_{L^2} \| \Delta  \tilde{e}_{\textbf{m}}^{k+1}  - \tau_l \Delta  \tilde{e}_{\textbf{m}}^{k}  \|_{L^2}  \\
\leq & C| \beta| \| B_l(  \tilde{e}_{\textbf{m}}^{k}  ) \|_{L^{\infty}} \| \Delta B_l ( \tilde{\textbf{m}}^{k} ) \|_{L^2} \| \Delta  \tilde{e}_{\textbf{m}}^{k+1}  - \tau_l \Delta  \tilde{e}_{\textbf{m}}^{k}  \|_{L^2}  \\
\leq & C |\beta | \| B_l(  \tilde{e}_{\textbf{m}}^{k}  ) \|_{H^1}^{1/2} \| B_l(  \tilde{e}_{\textbf{m}}^{k}  ) \|_{H^2}^{1/2}  \| \Delta B_l ( \tilde{\textbf{m}}^{k} ) \|_{L^2} \| \Delta  \tilde{e}_{\textbf{m}}^{k+1}  - \tau_l \Delta  \tilde{e}_{\textbf{m}}^{k}  \|_{L^2}  \\
\leq &  \epsilon  \|  \Delta  \tilde{e}_{\textbf{m}}^{k+1} \|_{L^2}^2 + \frac{ \epsilon }{ l }   \sum\limits_{i=0}^{l-1}  \| \Delta \tilde{e}_{\textbf{m}}^{k-i} \|_{L^2}^2 
 + C  \|  \nabla  B_l( \tilde{e}_{\textbf{m}}^{k} ) \|_{L^2}^2 .
\endaligned
\end{equation} 
Using Cauchy-Schwarz inequality,  the third term on the right hand side of \eqref{e_error_beta1} can be bounded by
 \begin{equation}\label{e_error_beta4}
\aligned
& | \beta | \left|  \left(   \textbf{m} (t^{k+1}) \times  \Delta B_l( \tilde{e}_{\textbf{m}}^{k} ) , \Delta  \tilde{e}_{\textbf{m}}^{k+1} - \tau_l \Delta  \tilde{e}_{\textbf{m}}^{k} \right) \right|   \\
\leq & | \beta | \|  \textbf{m} (t^{k+1}) \|_{L^{\infty}} \| \Delta B_l( \tilde{e}_{\textbf{m}}^{k} ) \|_{L^2} ( \| \Delta  \tilde{e}_{\textbf{m}}^{k+1}  \| + \tau_l \|  \Delta  \tilde{e}_{\textbf{m}}^{k}  \|_{L^2} ) \\
\leq & \frac{  |  \beta | \tau_l }{ 2 } \sum\limits_{i=0}^{l-1}  \| \Delta \tilde{e}_{\textbf{m}}^{k-i} \|_{L^2}^2 + \frac{ | \beta | }{ 2 } \| \Delta  \tilde{e}_{\textbf{m}}^{k+1} \| ^2 _{L^2}+ \frac{ | \beta | \tau_l }{ 2 } \| \Delta  \tilde{e}_{\textbf{m}}^{k} \| ^2 _{L^2}.
\endaligned
\end{equation} 
Applying the interpolation inequality \eqref{e_Preliminaries3}, \eqref{e_High-order_correct} and \eqref{e_error4},  the last term on the right hand side of \eqref{e_error_beta1} can be estimated by
 \begin{equation}\label{e_error_beta5}
\aligned
&  | \beta | \left|  \left(  ( \textbf{m} (t^{k+1}) - B_l( \textbf{m}(t^k) ) ) \times \Delta B_l ( \tilde{\textbf{m}}^{k} )  , \Delta  \tilde{e}_{\textbf{m}}^{k+1} - \tau_l \Delta  \tilde{e}_{\textbf{m}}^{k} \right) \right| \\
\leq & | \beta | \|  \sum\limits_{i=1}^l d_i \int_{ k+1-i}^{k+1} ( t^{ k+1-i} -s )^{l-1} \frac{ \partial ^{l} \textbf{m} }{\partial t ^l } (s) ds \|_{L^{\infty}} \| \Delta B_l ( \tilde{\textbf{m}}^{k} ) \|_{L^2} \| \Delta  \tilde{e}_{\textbf{m}}^{k+1} - \tau_l \Delta  \tilde{e}_{\textbf{m}}^{k} \|_{L^2} \\
\leq & \epsilon \|  \Delta  \tilde{e}_{\textbf{m}}^{k+1} \|_{L^2}^2 +  \epsilon  \|  \Delta  \tilde{e}_{\textbf{m}}^{k} \|_{L^2}^2 
 +  C ( \Delta t )^{2l-1} \int_{ k+1-l }^{k+1} \|  \frac{ \partial ^{l} \textbf{m} }{\partial t ^l }(s) \|_{H^2}^2 ds .
\endaligned
\end{equation} 
}

\textcolor{black}{
Finally combining \eqref{e_error8} with \eqref{e_error9}-\eqref{e_error_beta5}, we obtain
\begin{align}
& \sum\limits_{i,j=1}^l g_{i,j}
( \nabla \tilde{e}_{\textbf{m}}^{k+1+i-l } ,  \nabla \tilde{e}_{\textbf{m}}^{k+1+j-l } ) - \sum\limits_{i,j=1}^l g_{i,j}
( \nabla \tilde{e}_{\textbf{m}}^{k+i-l } ,  \nabla \tilde{e}_{\textbf{m}}^{k+j-l } ) \notag \\
& + \|  \sum\limits_{i=0}^l  \xi_i \nabla \tilde{e}_{\textbf{m}}^{k+1+i-l} \|^2 +  \frac{ \gamma (2 - \tau_l )- | \beta |   }{2} \Delta t \| \Delta \tilde{e}_{\textbf{m}}^{k+1} \|^2 \notag \\
\leq &  7 \epsilon  \Delta t \|  \Delta  \tilde{e}_{\textbf{m}}^{k+1} \|_{L^2}^2 
 + ( 5 \epsilon + \frac{ \gamma \tau_l + | \beta | \tau_l }{2} )
\Delta t \|  \Delta  \tilde{e}_{\textbf{m}}^{k} \|_{L^2}^2 \label{e_error14} \\
& +   \left(  \frac{  2 \epsilon  }{  l} +\frac{ | \beta | \tau_l  }{2} \right)  \sum\limits_{i=0}^{l-1}  \Delta t \| \Delta \tilde{e}_{\textbf{m}}^{k-i} \|_{L^2}^2 
+ C  \Delta t \|  \nabla  B_l( \tilde{e}_{\textbf{m}}^{k} ) \|_{L^2}^2 \notag \\
& + C \Delta t ( C_0 \Delta t )^{2q}  + C \Delta t \| \nabla B_l(  \textbf{m}( t^{k} )  ) \|_{L^6}^4 \| \nabla B_l( \tilde{e}_{\textbf{m}}^{k} ) \|^2 _{L^2} \notag \\
& + C ( \Delta t )^{2l} \int_{ k+1-l }^{k+1} 
\left( \|  \frac{ \partial ^{l} \textbf{m} }{\partial t ^l }(s) \|_{W^{1,3}}^2 + \|  \frac{ \partial ^{l+1} \textbf{m} }{\partial t ^{l+1} }(s) \|_{L^2}^2 +  \|  \frac{ \partial ^{l} \textbf{m} }{\partial t ^l }(s) \|_{H^2}^2 \right) ds , \notag
\end{align}
where $q=2$ with $l=1$ and $q=l$ with $l=2,3,4,5$ respectively.
}

Summing \eqref{e_error14} over $k$ from $0$ to $n$, and thanks to Lemma \ref{lem: multiplier step},  $G=(g_{i,j})$ is a symmetric positive definite matrix with minimum eigenvalue $\lambda_{\min}$, we obtain
\textcolor{black}{
  \begin{equation}\label{e_error15}
\aligned
& \frac{\lambda_{\min}}{ l }  \| \nabla \tilde{e}_{\textbf{m}}^{n+1 }\|^2 +  \left( \frac{  2 (1- \tau_l) - |\beta| (1+l\tau_l + \tau_l ) }{ 2 } - 14 \epsilon \right)  \Delta t  \sum\limits_{k=0}^{n} \| \Delta \tilde{e}_{\textbf{m}}^{k+1} \|^2 \\
\leq & C \sum\limits_{k=0}^{n}  \Delta t \|  \nabla  B_l( \tilde{e}_{\textbf{m}}^{k} ) \|_{L^2}^2 + C ( C_0 \Delta t )^{2q}  + C  \sum\limits_{k=0}^{n}  \Delta t \| \nabla B_l(  \textbf{m}( t^{k} )  ) \|_{L^6}^4 \| \nabla B_l( \tilde{e}_{\textbf{m}}^{k} ) \|^2 _{L^2} \\
& + C ( \Delta t )^{2l}  \int_{ 0}^{T} \|  \frac{ \partial ^{l} \textbf{m} }{\partial t ^l }(s) \|_{W^{1,3}}^2 ds  + C (\Delta t)^{2l}  \int_{ 0 }^{T} \|  \frac{ \partial ^{l+1} \textbf{m} }{\partial t ^{l+1} }(s) \|_{L^2}^2 ds.
  \endaligned
\end{equation}
Using the condition \eqref{e_beta_value1} and setting $ \epsilon = \frac{ 2 (1- \tau_l) - |\beta| (1+l\tau_l + \tau_l )  }{ 56 } $, we have
$$   \frac{  2 (1- \tau_l) - |\beta| (1+l\tau_l + \tau_l ) }{ 2 } - 14 \epsilon  =   \frac{  2 (1- \tau_l) - |\beta| (1+l\tau_l + \tau_l ) }{ 4 } >0 .$$
}
Then applying the discrete Gronwall Lemma \ref{lem: gronwall2} to \eqref{e_error15}, we can arrive at
 \begin{equation}\label{e_error_final_H2}
\aligned
& \|  \nabla \tilde{e}_{\textbf{m}}^{n+1} \|^2  + \Delta t \sum\limits_{k=0}^{n} \| \Delta \tilde{e}_{\textbf{m}}^{k+1} \|^2 
\leq 
\begin{cases}
C_1 \left(1+C_0^4 (\Delta t)^2 \right) (\Delta t)^2, \ \ l=1, \\
C_1 \left(1+C_0^{2l} \right) (\Delta t)^{2l}, \ \ l=2,3,4,5,
\end{cases}
\endaligned
\end{equation}  
where $C_1$ is independent of $C_0$ and $\Delta t$.

Thus, using the Poincar$\acute{e}$ inequality, we have 
\begin{equation}\label{e_error_final_induction4}
\aligned
& \|  \tilde{e}_{\textbf{m}}^{n+1} \|^2 + \|  \nabla \tilde{e}_{\textbf{m}}^{n+1} \|^2  + \Delta t \sum\limits_{k=0}^{n} \| \Delta \tilde{e}_{\textbf{m}}^{k+1} \|^2  \leq 
\begin{cases}
C_2 \left(1+C_0^4 (\Delta t)^2 \right) (\Delta t)^2, \ \ l=1, \\
C_2 \left(1+C_0^{2l} \right) (\Delta t)^{2l}, \ \ l=2,3,4,5,
\end{cases}
\endaligned
\end{equation} 
where $C_2$ is independent of $C_0$ and $\Delta t$.

\noindent{\bf Step 2: Estimates for $| 1-\xi^{n+1} |$.} 
 Let $\textbf{S}_{ R }^{k+1}$ be the truncation error defined by
\begin{equation}\label{e_error_xi_1}
\aligned
\textbf{S}_{ R }^{k+1}=\frac{\partial R (t^{k+1})}{\partial t}- \frac{ R (t^{k+1})-R(t^{k})}{\Delta t}=\frac{1}{\Delta t}\int_{t^k }^{t^{k+1}}(t^k-t)\frac{\partial^2 R }{\partial t^2}dt.
\endaligned
\end{equation}
Subtracting \eqref{e_High-order2} from \eqref{e_model_transform2} at $t^{k+1}$, we obtain the error equation corresponding to \eqref{e_High-order2}:
\begin{equation}\label{e_error_xi_2}
\aligned
& \frac{ e_{R}^{k+1} -  e_{R}^{k} }{ \Delta t } =  - \frac{R(t^{k+1} )}{ E( \textbf{m}(t^{k+1}) ) +K_0 } \| \textbf{m}(t^{k+1}) \times \Delta \textbf{m}(t^{k+1}) \|^2 \\
& \ \ \ \ \ \ \ \ \ \ \ \ 
+ \frac{R^{k+1} }{ E( \tilde{\textbf{m}}^{k+1} ) +K_0 }  \| B_l( \textbf{m}^{k} ) \times \Delta \tilde{\textbf{m}}^{k+1} \|^2 
- \textbf{S}_{ R }^{k+1} \\
=& - \xi^{n+1}  \left( \| \textbf{m}(t^{k+1}) \times \Delta \textbf{m}(t^{k+1}) \|^2 -  \| B_l( \textbf{m}^{k} ) \times \Delta \tilde{\textbf{m}}^{k+1} \|^2 \right) \\
& - R(t^{k+1}) \left(  \frac{1}{ E( \textbf{m}(t^{k+1}) ) +K_0 } - \frac{1}{ E( \tilde{\textbf{m}}^{k+1} ) +K_0 }  \right) \| \textbf{m}(t^{k+1}) \times \Delta \textbf{m}(t^{k+1}) \|^2 - \textbf{S}_{ R }^{k+1} \\
& -    \frac{e_R^{k+1} }{ E( \tilde{\textbf{m}}^{k+1} ) +K_0 } \| \textbf{m}(t^{k+1}) \times \Delta \textbf{m}(t^{k+1}) \|^2.
\endaligned
\end{equation}
Using \eqref{e_stability2} and the Sobolev embedding inequality (3.28) in \cite{liu2010stable},  the first term on the right\textcolor{black}{-}hand side of \eqref{e_error_xi_2} can be bounded by
\begin{equation}\label{e_error_xi_4}
\aligned
& - \xi^{n+1} \left( \| \textbf{m}(t^{k+1}) \times \Delta \textbf{m}(t^{k+1}) \|^2 -  \| B_l( \textbf{m}^{k} ) \times \Delta \tilde{\textbf{m}}^{k+1} \|^2 \right) \\
\leq & \xi^{n+1} \| \textbf{m}(t^{k+1}) \times \Delta \textbf{m}(t^{k+1}) + B_l( \textbf{m}^{k} ) \times \Delta \tilde{\textbf{m}}^{k+1} \| \| \textbf{m}(t^{k+1}) \times \Delta \textbf{m}(t^{k+1}) - B_l( \textbf{m}^{k} ) \times \Delta \tilde{\textbf{m}}^{k+1} \| \\
\leq & \xi^{n+1} \| \textbf{m}(t^{k+1}) \times \Delta \textbf{m}(t^{k+1}) + B_l( \textbf{m}^{k} ) \times \Delta \tilde{\textbf{m}}^{k+1} \| \| 
( \textbf{m}(t^{k+1}) - B_l( \textbf{m}^{k} ) )  \times \Delta \textbf{m}(t^{k+1})  \| \\
&+  \xi^{n+1} \| \textbf{m}(t^{k+1}) \times \Delta \textbf{m}(t^{k+1}) + B_l( \textbf{m}^{k} ) \times \Delta \tilde{\textbf{m}}^{k+1} \| \| B_l( \textbf{m}^{k} )   \times (\Delta \textbf{m}(t^{k+1}) - \Delta \tilde{\textbf{m}}^{k+1} ) \| \\
\leq & C \xi^{n+1} \| B_l( \textbf{m}^{k} ) \times \Delta \tilde{\textbf{m}}^{k+1}  \|_{L^2} \| B_l( e_{\textbf{m}}^{k} )  \|_{L^4} \| \Delta \textbf{m}(t^{k+1})  \|_{L^4} \\
& +  \frac{1}{4 M_T}  \xi^{n+1}  \| B_l( \textbf{m}^{k} ) \times \Delta \tilde{\textbf{m}}^{k+1}  \|_{L^2} \| B_l( \textbf{m}^{k} )  \|_{L^{\infty}} \| \Delta \tilde{e}_{\textbf{m}}^{k+1}  \|_{L^2} \\
& +  C\| \Delta \textbf{m}(t^{k+1})  \|_{L^{\infty}} \| \Delta  \tilde{e}_{\textbf{m}}^{k+1}  \|_{L^2}  + C \|  \sum\limits_{i=1}^l d_i \int_{ k+1-i}^{k+1} ( t^{ k+1-i} -s )^{l-1} \frac{ \partial ^{l} \textbf{m} }{\partial t ^l } (s) ds \|_{H^1}.
\endaligned
\end{equation}
The second term on the right\textcolor{black}{-}hand side of \eqref{e_error_xi_2} can be estimated by
\begin{equation}\label{e_error_xi_5}
\aligned
- R(t^{k+1})& \left(  \frac{1}{ E( \textbf{m}(t^{k+1}) ) +1 } - \frac{1}{ E( \tilde{\textbf{m}}^{k+1} ) +1 }  \right) \| \textbf{m}(t^{k+1}) \times \Delta \textbf{m}(t^{k+1}) \|^2 \\
\leq &C \| \textbf{m}(t^{k+1}) \times \Delta \textbf{m}(t^{k+1}) \|^2  ( \| \nabla \tilde{\textbf{m}}^{k+1} \|^2 - \| \nabla \textbf{m}(t^{k+1}) \|^2 ) \\
\leq & C \| \textbf{m}(t^{k+1}) \times \Delta \textbf{m}(t^{k+1}) \|^2 (  \nabla \tilde{\textbf{m}}^{k+1} + \nabla \textbf{m}(t^{k+1}))\| \nabla \tilde{e}_{\textbf{m}}^{k+1} \|
\endaligned
\end{equation}
Combining \eqref{e_error_xi_2} with \eqref{e_error_xi_4} and \eqref{e_error_xi_5}, and taking the inner product with $2 \Delta t e_{R}^{k+1}$ result in
\begin{equation}\label{e_error_xi_6}
\aligned
& ( | e_{R}^{k+1} |^2 - | e_{R}^{k} |^2 + | e_{R}^{k+1}- e_{R}^{k} |^2)  +2 \Delta t  \frac{ \| \textbf{m}(t^{k+1}) \times \Delta \textbf{m}(t^{k+1}) \|^2 }{ E( \tilde{\textbf{m}}^{k+1} ) +K_0 }   | e_R^{k+1} |^2 \\
\leq & C  \Delta t \xi^{n+1} \| B_l( \textbf{m}^{k} ) \times \Delta \tilde{\textbf{m}}^{k+1}  \|^2 ( \| B_l( \tilde{e}_{\textbf{m}}^{k} ) \|_{L^2} \| \nabla B_l( \tilde{e}_{\textbf{m}}^{k} ) \|_{L^2} + (C_0 \Delta t)^{2q} ) \\
&+  \frac{1}{4 M_T} \xi^{n+1}  \Delta t  \| B_l( \textbf{m}^{k} ) \times \Delta \tilde{\textbf{m}}^{k+1}  \|^2 \| B_l( \textbf{m}^{k} ) \|_{L^{\infty}}^2 |e_{R}^{k+1}| ^2  + C_3 \Delta t |e_R^{k+1}|^2 \\
& + C \Delta t  \xi^{n+1} \| \Delta  \tilde{e}_{\textbf{m}}^{k+1}  \|_{L^2}^2 + C ( \Delta t )^{2l} \int_{ k+1-l }^{k+1}  \|  \frac{ \partial ^{l} \textbf{m} }{\partial t ^l }(s) \|_{H^1}^2 ds + C (\Delta t)^3.
\endaligned
\end{equation}
Summing \eqref{e_error_xi_6}  over $k$, $k=0,1,2,\ldots,n^*$, where $n^*$ is the time step at which $|e_R^{n^*+1}|$ achieves its maximum value for $k=0,1,2,\ldots,n$, we can obtain
\begin{equation}\label{e_error_xi_7}
\aligned
  | e_{R}^{n^*+1} |^2 \leq &  \frac{1}{4 M_T}   |e_{R}^{n^*+1}| ^2  \sum_{k=0}^{n^*} \Delta t \xi^{n+1} \| B_l( \textbf{m}^{k} ) \times \Delta \tilde{\textbf{m}}^{k+1}  \|^2 \| \tilde{\textbf{m}}^{k}  \|_{L^{\infty}}^2 \\
&+ C \sum_{k=0}^{n^*} \Delta t \xi^{n+1} \| B_l( \textbf{m}^{k} ) \times \Delta \tilde{\textbf{m}}^{k+1}  \|^2 \| B_l( \tilde{e}_{\textbf{m}}^{k} )  \|_{L^2} ^2 \\
& + C \sum_{k=0}^{n^*} \Delta t  \xi^{n+1} \| B_l( \textbf{m}^{k} ) \times \Delta \tilde{\textbf{m}}^{k+1}  \|^2 \| \nabla B_l( \tilde{e}_{\textbf{m}}^{k} )  \|_{L^2} ^2+ C\sum_{k=0}^{n^*} \Delta t \| \Delta  \tilde{e}_{\textbf{m}}^{k+1}  \|_{L^2}^2  \\
& + C_3 \sum_{k=0}^{n^*} \Delta t |e_R^{k+1}|^2 + C ( \Delta t )^{2l} \int_{ 0}^{T}  \|  \frac{ \partial ^{l} \textbf{m} }{\partial t ^l }(s) \|_{H^1}^2 ds + C (C_0 \Delta t)^{2q} + C (\Delta t)^2.
\endaligned
\end{equation}
Thus  choosing $\Delta t \leq \frac{1}{2C_3}$ and  using the discrete Gronwall Lemma, we have
\begin{equation}\label{e_error_xi_final}
\aligned
 | e_{R}^{n+1} |^2 \leq & | e_{R}^{n^*+1} |^2 \le
 \begin{cases}
C_4 \left(1+C_0^4 (\Delta t)^2 \right) (\Delta t)^2, \ \ l=1, \\
C_4 \left(1+C_0^{2l} (\Delta t)^{2l-2} \right) (\Delta t)^{2}, \ \ l=2,3,4,5,
\end{cases}
\endaligned
\end{equation}
where $C_4$ is independent of $C_0$ and $\Delta t$.

\noindent{\bf Step 3: Completion of  the induction process.} Recalling \eqref{e_High-order2}, we have
\begin{equation}\label{e_error_final_induction1}
\aligned
 | 1- \xi^{n+1} | =  & | \frac{R(t^{k+1} )}{ E( \textbf{m}(t^{n+1}) ) +K_0 }  - \frac{R^{n+1} }{ E( \tilde{\textbf{m}}^{n+1} ) +K_0 } | \\
 \leq & C(  | e_{R}^{n+1} | + \| \nabla \tilde{e}_{\textbf{m}}^{n+1} \| ) \\
  \leq & C_5 \Delta t \sqrt{  1+C_0^{2q} (\Delta t)^{2q-2} },
\endaligned
\end{equation}
where $C_5$ is independent of $C_0$ and $\Delta t$,  $q=2$ when $l=1$, and $q=l$ when $l=2,3,4,5$.

Letting $C_0=\max\{ \frac{( 8C_1)^{4/q} }{2} ,    \frac{( 2C_2)^{2/q} }{2} , C_3 , 2 C_5,1 \}$ and $\Delta t \leq \frac{1}{ 1+2^{q+2} C_0^q }$,  we can obtain
\begin{equation}\label{e_error_final_induction2}
\aligned
 C_5 \sqrt{  1+C_0^{2q} (\Delta t)^{2q-2} } \leq C_5 (1+C_0^{q} \Delta t ) \leq C_0.
\endaligned
\end{equation}
Then combining \eqref{e_error_final_induction1} with \eqref{e_error_final_induction2} results in
\begin{equation}\label{e_error_final_induction3}
\aligned
 | 1- \xi^{n+1} |  \leq &C_0 \Delta t.
\endaligned
\end{equation}
Recalling \eqref{e_error_final_induction4}, we have
\begin{equation}\label{e_error_final_induction5}
\aligned
\|  \tilde{e}_{\textbf{m}}^{n+1} \| + \|  \nabla \tilde{e}_{\textbf{m}}^{n+1} \|  +  \| \Delta \tilde{e}_{\textbf{m}}^{n+1} \|  \leq &
\begin{cases}
 \sqrt{ C_2 \left(1+C_0^4 (\Delta t)^2 \right)  } (\Delta t)^{1/2}, \ \ l=1, \\
 \sqrt{ C_2 \left(1+C_0^{2l}  \right) (\Delta t)^{2l-2} }  (\Delta t)^{1/2}, \ \ l=2,3,4,5.
\end{cases}
\endaligned
\end{equation}

Thus we have
\begin{equation}\label{e_error_final_induction6}
\aligned
\|  \tilde{e}_{\textbf{m}}^{n+1} \| + \|  \nabla \tilde{e}_{\textbf{m}}^{n+1} \|  +  \| \Delta \tilde{e}_{\textbf{m}}^{n+1} \| 
& \leq \sqrt{2C_2 }  (\Delta t)^{1/2}  \leq (\Delta t)^{1/4} ,
\endaligned
\end{equation} 
which completes the induction process \eqref{e_error1} and \eqref{e_error2}.

Recalling \eqref{e_error_final_induction4} and Lemma \ref{lem: relation for error functions}, we have the final results
\begin{equation}\label{e_error_final_m}
\aligned
& \|  \textbf{m}^{n+1} - \textbf{m}(t^{n+1}) \|^2 + \|  \nabla( \textbf{m}^{n+1} - \textbf{m}(t^{n+1}) ) \|^2 + \|  \hat{\textbf{m}}^{n+1} - \textbf{m}(t^{n+1}) \|^2 \\
& + \|  \nabla( \hat{\textbf{m}}^{n+1} - \textbf{m}(t^{n+1}) ) \|^2 + \|  \tilde{\textbf{m}}^{n+1} - \textbf{m}(t^{n+1}) \|^2 \\
& + \|  \nabla( \tilde{\textbf{m}}^{n+1} - \textbf{m}(t^{n+1}) ) \|^2 +  \Delta t \sum\limits_{k=0}^{n} \|  \Delta( \tilde{\textbf{m}}^{k+1} - \textbf{m}(t^{k+1}) ) \|^2 
\leq  C(\Delta t)^{2l}.
\endaligned
\end{equation} 
The proof of Theorem \ref{the: error_estimate_final} is finally completed.
\end{proof}

 \section{Semi-implicit discretization for the term $\beta \textbf{m}\times \Delta \textbf{m}$}  
{\color{black}
In the last section, we established error estimates for the scheme \eqref{e_High-order1} where the term $\beta \textbf{m}\times \Delta \textbf{m}$ is treated explicitly under the condition \eqref{e_beta_value1}.  In order to relax this condition, we shall treat the term   $\beta \textbf{m}\times \Delta \textbf{m}$  semi-implicitly, namely, we replace  \eqref{e_High-order1} with $S=0$ by 
\begin{equation}\label{e_semi implicit1}
	\aligned
	D_l \tilde{\textbf{m}}^{n+1} = & \gamma \Delta \tilde{\textbf{m}}^{n+1}+ \gamma | \nabla B_l( \textbf{m}^{n})  |^2 B_l( \textbf{m}^{n} )- \beta B_l( \textbf{m}^{n} ) \times \Delta  \tilde{\textbf{m}}^{n+1}.
	\endaligned
\end{equation} 
We establish below an error analysis for the above scheme under a  condition which is less restrictive than \eqref{e_beta_value1}, at the price of having to solve a coupled elliptic system with variable coefficients at each time step.

 The main result od this section is stated below.
  }

\medskip
\begin{theorem}\label{the: error_estimate_final_semi}
 \textcolor{black}{
 Supposing that the damping parameter $\gamma$ satisfies
  \begin{equation} \label{e_beta_value2}
\gamma > \frac{ |\beta | \tau_l }{ 1- \tau_l }
  \end{equation}
 and assuming  $\textbf{m}\in H^{l+1}(0,T;\textbf{L}^2(\Omega))\bigcap H^{l}(0,T;\textbf{W}^{1,3}(\Omega)) \bigcap L^{\infty}(0,T; H^4(\Omega))$, then for the scheme \eqref{e_semi implicit1}, \eqref{e_High-order2}-\eqref{e_High-order_eta},  we have
 \begin{equation}  \label{e_final_semi}
\aligned
& \|  \textbf{m}^{n+1} - \textbf{m}(t^{n+1}) \|^2 + \|  \nabla( \textbf{m}^{n+1} - \textbf{m}(t^{n+1}) ) \|^2 + \|  \hat{\textbf{m}}^{n+1} - \textbf{m}(t^{n+1}) \|^2 \\
& + \|  \nabla( \hat{\textbf{m}}^{n+1} - \textbf{m}(t^{n+1}) ) \|^2 + \|  \tilde{\textbf{m}}^{n+1} - \textbf{m}(t^{n+1}) \|^2 \\
& + \|  \nabla( \tilde{\textbf{m}}^{n+1} - \textbf{m}(t^{n+1}) ) \|^2 +  \Delta t \sum\limits_{k=0}^{n} \|  \Delta( \tilde{\textbf{m}}^{k+1} - \textbf{m}(t^{k+1}) ) \|^2 \\
\leq & C(\Delta t)^{2l}, \ \ \ \forall \ 0\leq n\leq N-1,\quad 1\le l\le 5,
\endaligned
\end{equation} 
where $C$ is a positive constant  independent of $\Delta t$.
}
\end{theorem}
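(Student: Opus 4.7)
The plan is to follow the proof of Theorem~\ref{the: error_estimate_final} almost verbatim: we adopt the same bootstrap induction hypotheses \eqref{e_error1}--\eqref{e_error2}, and run the same three-step argument ($H^2$ bound for $\tilde{e}_{\textbf{m}}^{n+1}$, an estimate of $|1-\xi^{n+1}|$, and closing of the induction). The only substantive change is in the estimate of the $\beta$ term. Subtracting \eqref{e_semi implicit1} from \eqref{e_model_transform1} at $t^{k+1}$ produces an error equation identical to \eqref{e_error7} except that the cross-product residual now reads $-\beta\bigl(\textbf{m}(t^{k+1})\times\Delta\textbf{m}(t^{k+1}) - B_l(\textbf{m}^{k})\times\Delta\tilde{\textbf{m}}^{k+1}\bigr)$. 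Testing with $-\Delta t(\Delta\tilde{e}_{\textbf{m}}^{k+1}-\tau_l\Delta\tilde{e}_{\textbf{m}}^{k})$ then reproduces the G-stability identity \eqref{e_error8} with the six-term right-hand side, in which only $I_6$ is modified; I label its semi-implicit counterpart $\tilde I_6$.

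The key new estimate is a reorganization of $\tilde I_6$ that exploits the pointwise identity $(\textbf{a}\times\textbf{b})\cdot\textbf{b}\equiv 0$. I would split
\begin{equation*}
\aligned
&\textbf{m}(t^{k+1})\times\Delta\textbf{m}(t^{k+1}) - B_l(\textbf{m}^{k})\times\Delta\tilde{\textbf{m}}^{k+1} \\
&\qquad = \bigl[\textbf{m}(t^{k+1})-B_l(\textbf{m}(t^{k}))\bigr]\times\Delta\textbf{m}(t^{k+1}) + B_l(e_{\textbf{m}}^{k})\times\Delta\textbf{m}(t^{k+1}) + B_l(\textbf{m}^{k})\times\Delta\tilde{e}_{\textbf{m}}^{k+1}.
\endaligned
\end{equation*}
The first two pieces are handled exactly as in \eqref{e_error_beta2}, \eqref{e_error_beta5} and \eqref{e_error_beta3} of the explicit-case proof, contributing $\epsilon\|\Delta\tilde{e}_{\textbf{m}}^{k+1}\|^2$, lower-order $\|\nabla B_l(\tilde{e}_{\textbf{m}}^k)\|^2$ terms, and truncation residuals of order $(\Delta t)^{2l}$. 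The crucial point is that the third piece, when paired with $\Delta\tilde{e}_{\textbf{m}}^{k+1}$, vanishes pointwise because of the orthogonality, so
\begin{equation*}
\bigl(\beta B_l(\textbf{m}^{k})\times\Delta\tilde{e}_{\textbf{m}}^{k+1},\,\Delta\tilde{e}_{\textbf{m}}^{k+1}\bigr)=0.
\end{equation*}
Only the $\tau_l$-shifted piece survives, which by Cauchy--Schwarz is bounded by
\begin{equation*}
|\beta|\tau_l\,\|B_l(\textbf{m}^{k})\|_{L^{\infty}}\|\Delta\tilde{e}_{\textbf{m}}^{k+1}\|\,\|\Delta\tilde{e}_{\textbf{m}}^{k}\|\le \tfrac{|\beta|\tau_l}{2}\bigl(\|\Delta\tilde{e}_{\textbf{m}}^{k+1}\|^2+\|\Delta\tilde{e}_{\textbf{m}}^{k}\|^2\bigr),
\end{equation*}
since $\|B_l(\textbf{m}^{k})\|_{L^{\infty}}\le C$ by the normalization in \eqref{e_High-order_correct}.

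Substituting this new bound for $\tilde I_6$ together with the unchanged $I_1$--$I_5$ estimates into the G-stability identity and summing over $k$, the coefficient of $\Delta t\sum_k\|\Delta\tilde{e}_{\textbf{m}}^{k+1}\|^2$ on the left-hand side becomes $\gamma(1-\tau_l)-|\beta|\tau_l-C\epsilon$, which is strictly positive under the hypothesis $\gamma>|\beta|\tau_l/(1-\tau_l)$ once $\epsilon$ is chosen small enough. A direct application of the discrete Gronwall Lemma \ref{lem: gronwall2} then delivers the $H^2$ estimate of Step~1 in the form \eqref{e_error_final_H2}. Steps~2 and~3 --- the estimate for $|1-\xi^{n+1}|$ in \eqref{e_error_xi_final} and the closing of the bootstrap via \eqref{e_error_final_induction6} --- do not involve the $\beta$ discretization and carry over verbatim, yielding \eqref{e_final_semi}.

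The main obstacle is to recognize the right decomposition of $\tilde I_6$ that places $\Delta\tilde{e}_{\textbf{m}}^{k+1}$ on both sides of the cross product, thereby activating the pointwise orthogonality and absorbing what would otherwise be a large $\|\Delta\tilde{e}_{\textbf{m}}^{k+1}\|^2$ contribution. Once this is done, the improvement from \eqref{e_beta_value1} to the weaker threshold \eqref{e_beta_value2} --- vanishing for $l=1,2$ since $\tau_1=\tau_2=0$ --- is automatic, and no other part of the error analysis requires modification.
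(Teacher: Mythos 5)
Your overall strategy coincides with the paper's: rerun the proof of Theorem \ref{the: error_estimate_final} and modify only the estimate of $I_6$, exploiting the pointwise identity $(\textbf{a}\times\textbf{b})\cdot\textbf{b}=0$ so that the dangerous pairing of the implicit cross term with $\Delta\tilde{e}_{\textbf{m}}^{k+1}$ vanishes and only the $\tau_l$-shifted pairing survives. That is indeed the key idea of Section 5. However, your three-term decomposition is too coarse to deliver the threshold \eqref{e_beta_value2} as stated. In your third piece you keep $B_l(\textbf{m}^{k})\times\Delta\tilde{e}_{\textbf{m}}^{k+1}$ intact and bound the surviving term by $|\beta|\tau_l\,\|B_l(\textbf{m}^{k})\|_{L^{\infty}}\|\Delta\tilde{e}_{\textbf{m}}^{k+1}\|\,\|\Delta\tilde{e}_{\textbf{m}}^{k}\|$, and then silently replace $\|B_l(\textbf{m}^{k})\|_{L^{\infty}}$ by $1$. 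But the extrapolation $B_l$ has coefficients of alternating sign whose absolute values sum to $\Lambda_l>1$ for $l\ge 2$ (e.g.\ $\Lambda_3=7$, $\Lambda_5=31$), so normalization of each $\textbf{m}^{j}$ only gives $\|B_l(\textbf{m}^{k})\|_{L^{\infty}}\le\Lambda_l$. Your argument therefore produces the coefficient $\tfrac{|\beta|\tau_l\Lambda_l}{2}$ in front of $\|\Delta\tilde{e}_{\textbf{m}}^{k+1}\|^2+\|\Delta\tilde{e}_{\textbf{m}}^{k}\|^2$, and the resulting smallness condition is $\gamma>\Lambda_l|\beta|\tau_l/(1-\tau_l)$, which for $l=5$ is roughly thirty times more restrictive than \eqref{e_beta_value2}. (For $l=1,2$ the issue is invisible because $\tau_l=0$.)

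The repair is exactly the finer splitting the paper performs in \eqref{e_error_beta1_semi implicit}: write $B_l(\textbf{m}^{k})=\textbf{m}(t^{k+1})-\bigl(\textbf{m}(t^{k+1})-B_l(\textbf{m}(t^{k}))\bigr)-B_l(e_{\textbf{m}}^{k})$ inside your third piece, so that the only term carrying the full weight $\tfrac{|\beta|\tau_l}{2}$ is $\textbf{m}(t^{k+1})\times\Delta\tilde{e}_{\textbf{m}}^{k+1}$, whose prefactor has $L^{\infty}$ norm exactly $1$ (see \eqref{e_error_beta5_semi implicit}); the two remaining pieces still have $\Delta\tilde{e}_{\textbf{m}}^{k+1}$ in the cross product, so the same orthogonality applies, and their prefactors are $O((\Delta t)^{l})$ (truncation) and $o(1)$ (via the induction hypothesis \eqref{e_error4} and the interpolation inequality \eqref{e_Preliminaries3}), so they are absorbed into $\epsilon$ terms as in \eqref{e_error_beta2_semi implicit} and \eqref{e_error_beta4_semi implicit}. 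With that refinement the coefficient of $\Delta t\sum_k\|\Delta\tilde{e}_{\textbf{m}}^{k+1}\|^2$ becomes $\gamma(1-\tau_l)-|\beta|\tau_l-C\epsilon$ and the rest of your outline (Steps 2 and 3 unchanged) goes through exactly as you describe.
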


\begin{proof}
{\color{black}
The proof of Theorem \ref{the: error_estimate_final_semi} is exactly the same as Theorem \ref{the: error_estimate_final}, except for the highly nonlinear term with exchange parameter $\beta \ne 0$, so for the sake of brevity our focus will be on this. Similar to \eqref{e_error_beta1}, we can obtain 
 \begin{equation}\label{e_error_beta1_semi implicit}
\aligned
 | I_6 | 
= & | \beta | \left|  \left(  ( \textbf{m} (t^{k+1}) - B_l( \textbf{m}(t^k) ) ) \times \Delta \tilde{e}_{\textbf{m}}^{k+1} , \Delta  \tilde{e}_{\textbf{m}}^{k+1} - \tau_l \Delta  \tilde{e}_{\textbf{m}}^{k} \right) \right| \\
& +  | \beta | \left|  \left(  ( \textbf{m} (t^{k+1}) - B_l( \textbf{m}(t^k) ) ) \times \Delta \textbf{m} (t^{k+1}) , \Delta  \tilde{e}_{\textbf{m}}^{k+1} - \tau_l \Delta  \tilde{e}_{\textbf{m}}^{k} \right) \right| \\
& +  | \beta | \left|  \left(  B_l( e_{\textbf{m}}^{k} ) \times \Delta  \tilde{e}_{\textbf{m}}^{k+1} , \Delta  \tilde{e}_{\textbf{m}}^{k+1} - \tau_l \Delta  \tilde{e}_{\textbf{m}}^{k} \right) \right| 
 + | \beta | \left|  \left(  B_l( e_{\textbf{m}}^{k} ) \times \Delta  \textbf{m} (t^{k+1}) , \Delta  \tilde{e}_{\textbf{m}}^{k+1} - \tau_l \Delta  \tilde{e}_{\textbf{m}}^{k} \right) \right| \\
& + | \beta | \left|  \left(   \textbf{m} (t^{k+1}) \times  \Delta \tilde{e}_{\textbf{m}}^{k+1} , \Delta  \tilde{e}_{\textbf{m}}^{k+1} - \tau_l \Delta  \tilde{e}_{\textbf{m}}^{k} \right) \right| .
\endaligned
\end{equation} 
Applying Cauchy-Schwarz inequality, the first term on the right hand side of \eqref{e_error_beta1_semi implicit} can be estimated by
 \begin{equation}\label{e_error_beta2_semi implicit}
\aligned
& \beta \left|  \left(  ( \textbf{m} (t^{k+1}) - B_l( \textbf{m}(t^k) ) ) \times \Delta \tilde{e}_{\textbf{m}}^{k+1} , \Delta  \tilde{e}_{\textbf{m}}^{k+1} - \tau_l \Delta  \tilde{e}_{\textbf{m}}^{k} \right) \right| \\
\leq & | \beta | \|  \sum\limits_{i=1}^l d_i \int_{ k+1-i}^{k+1} ( t^{ k+1-i} -s )^{l-1} \frac{ \partial ^{l} \textbf{m} }{\partial t ^l } (s) ds \|_{L^{\infty}} \| \Delta \tilde{e}_{\textbf{m}}^{k+1} \|_{L^2} \|  \tau_l \Delta  \tilde{e}_{\textbf{m}}^{k} \|_{L^2} \\
\leq &  \epsilon \|  \Delta  \tilde{e}_{\textbf{m}}^{k} \|_{L^2}^2
+  C ( \Delta t )^{2l-1} \int_{ k+1-l }^{k+1} 
\|  \frac{ \partial ^{l} \textbf{m} }{\partial t ^l }(s) \|_{H^2}^2 ds \|  \Delta  \tilde{e}_{\textbf{m}}^{k+1} \|_{L^2}^2 \\
\leq &  \epsilon  \|  \Delta  \tilde{e}_{\textbf{m}}^{k} \|_{L^2}^2 +
 \epsilon  \|  \Delta  \tilde{e}_{\textbf{m}}^{k+1} \|_{L^2}^2,
\endaligned
\end{equation} 
where we set $\Delta t \leq \hat{C} $, which is sufficiently small to keep the last inequality holds.

Applying the interpolation inequality \eqref{e_Preliminaries3},  the second term on the right hand side of \eqref{e_error_beta1_semi implicit} can be bounded by
 \begin{equation}\label{e_error_beta3_semi implicit}
\aligned
& \beta \left|  \left(  ( \textbf{m} (t^{k+1}) - B_l( \textbf{m}(t^k) ) ) \times \Delta \textbf{m} (t^{k+1}) , \Delta  \tilde{e}_{\textbf{m}}^{k+1} - \tau_l \Delta  \tilde{e}_{\textbf{m}}^{k} \right) \right| \\
\leq & | \beta | \|  \sum\limits_{i=1}^l d_i \int_{ k+1-i}^{k+1} ( t^{ k+1-i} -s )^{l-1} \frac{ \partial ^{l} \textbf{m} }{\partial t ^l } (s) ds \|_{L^{\infty}} \| \Delta \textbf{m} (t^{k+1}) \|_{L^2} \| \Delta  \tilde{e}_{\textbf{m}}^{k+1} - \tau_l \Delta  \tilde{e}_{\textbf{m}}^{k} \|_{L^2} \\
\leq &   \epsilon \|  \Delta  \tilde{e}_{\textbf{m}}^{k+1} \|_{L^2}^2 +   \epsilon \|  \Delta  \tilde{e}_{\textbf{m}}^{k} \|_{L^2}^2 
 +  C ( \Delta t )^{2l-1} \int_{ k+1-l }^{k+1} \|  \frac{ \partial ^{l} \textbf{m} }{\partial t ^l }(s) \|_{H^2}^2 ds .
\endaligned
\end{equation} 
Using the interpolation inequality \eqref{e_Preliminaries3} and \eqref{e_error4},  the third and fourth terms on the right hand side of \eqref{e_error_beta1_semi implicit} can be transformed into
 \begin{equation}\label{e_error_beta4_semi implicit}
\aligned
& \beta \left|  \left(  B_l( e_{\textbf{m}}^{k} ) \times \Delta  \tilde{e}_{\textbf{m}}^{k+1} , \Delta  \tilde{e}_{\textbf{m}}^{k+1} - \tau_l \Delta  \tilde{e}_{\textbf{m}}^{k} \right) \right| +  \beta \left|  \left(  B_l( e_{\textbf{m}}^{k} ) \times \Delta  \textbf{m} (t^{k+1}) , \Delta  \tilde{e}_{\textbf{m}}^{k+1} - \tau_l \Delta  \tilde{e}_{\textbf{m}}^{k} \right) \right| \\
\leq  &  \beta \| B_l( e_{\textbf{m}}^{k} ) \|_{L^{\infty}} \| \Delta  \tilde{e}_{\textbf{m}}^{k+1} \|_{L^2} \| \tau_l \Delta  \tilde{e}_{\textbf{m}}^{k}  \|_{L^2} 
+ \beta \| B_l( e_{\textbf{m}}^{k} ) \|_{L^{\infty}} \| \Delta \textbf{m} (t^{k+1}) \|_{L^2} \| \Delta  \tilde{e}_{\textbf{m}}^{k+1} - \tau_l \Delta  \tilde{e}_{\textbf{m}}^{k}  \|_{L^2} \\
\leq & \beta \| B_l( e_{\textbf{m}}^{k} ) \|_{H^1}^{1/2} \| B_l( e_{\textbf{m}}^{k} ) \|_{H^2}^{1/2}  ( \| \Delta  \tilde{e}_{\textbf{m}}^{k+1} \|_{L^2} \| \tau_l \Delta  \tilde{e}_{\textbf{m}}^{k}  \|_{L^2} +  \| \Delta  \textbf{m} (t^{k+1}) \|_{L^2} \|  \Delta  \tilde{e}_{\textbf{m}}^{k+1} - \tau_l \Delta  \tilde{e}_{\textbf{m}}^{k}  \|_{L^2} ) \\
\leq &   \epsilon\|  \Delta  \tilde{e}_{\textbf{m}}^{k+1} \|_{L^2}^2 + \frac{  \epsilon  }{  l}  \sum\limits_{i=0}^{l-1}  \| \Delta \tilde{e}_{\textbf{m}}^{k-i} \|_{L^2}^2 
 +  C (\Delta t)^{1/4} \|  \Delta  \tilde{e}_{\textbf{m}}^{k} \|_{L^2}^2 + C  \|  \nabla  B_l( \tilde{e}_{\textbf{m}}^{k} ) \|_{L^2}^2 \\
\leq &  \epsilon \|  \Delta  \tilde{e}_{\textbf{m}}^{k+1} \|_{L^2}^2 + \frac{  \epsilon }{ l }  \sum\limits_{i=0}^{l-1}  \| \Delta \tilde{e}_{\textbf{m}}^{k-i} \|_{L^2}^2 
 +    \epsilon \|  \Delta  \tilde{e}_{\textbf{m}}^{k} \|_{L^2}^2 + C  \|  \nabla  B_l( \tilde{e}_{\textbf{m}}^{k} ) \|_{L^2}^2,
\endaligned
\end{equation} 
where we set $\Delta t \leq \check{C}$, which is sufficiently small to keep the last inequality holds.

Recalling the fact that $ \| \textbf{m} (t^{k+1})  \|_{L^{\infty }} =1 $, the last term on the right hand side of \eqref{e_error_beta1_semi implicit} can be estimated by
 \begin{equation}\label{e_error_beta5_semi implicit}
\aligned
&\beta \left|  \left(   \textbf{m} (t^{k+1}) \times  \Delta \tilde{e}_{\textbf{m}}^{k+1} , \Delta  \tilde{e}_{\textbf{m}}^{k+1} - \tau_l \Delta  \tilde{e}_{\textbf{m}}^{k} \right) \right| =  \beta \left|  \left(   \textbf{m} (t^{k+1}) \times  \Delta \tilde{e}_{\textbf{m}}^{k+1} , - \tau_l \Delta  \tilde{e}_{\textbf{m}}^{k} \right) \right| \\
\leq  & | \beta | \|  \textbf{m} (t^{k+1}) \| _{L^{\infty}} \| \tau_l ^{1/2} \Delta \tilde{e}_{\textbf{m}}^{k+1} \|_{L^2} \| \tau_l ^{1/2} \Delta \tilde{e}_{\textbf{m}}^{k} \|_{L^2} \\
\leq & \frac{ | \beta | \tau_l }{ 2 } \|  \Delta \tilde{e}_{\textbf{m}}^{k+1} \|_{L^2}^2 + \frac{ | \beta | \tau_l }{ 2 } \|  \Delta \tilde{e}_{\textbf{m}}^{k} \|_{L^2}^2 .
\endaligned
\end{equation}
Thus by using the similar procedure for the fully explicit case in \eqref{e_error14}, we can obtain that 
  \begin{equation}\label{e_error14_semi implicit}
\aligned
& \sum\limits_{i,j=1}^l g_{i,j}
( \nabla \tilde{e}_{\textbf{m}}^{k+1+i-l } ,  \nabla \tilde{e}_{\textbf{m}}^{k+1+j-l } ) - \sum\limits_{i,j=1}^l g_{i,j}
( \nabla \tilde{e}_{\textbf{m}}^{k+i-l } ,  \nabla \tilde{e}_{\textbf{m}}^{k+j-l } ) \\
& + \|  \sum\limits_{i=0}^l  \xi_i \nabla \tilde{e}_{\textbf{m}}^{k+1+i-l} \|^2 +  \frac{ \gamma( 2 - \tau_l) - |\beta| \tau_l }{2} \Delta t \| \Delta \tilde{e}_{\textbf{m}}^{k+1} \|^2 \\
\leq & 7 \epsilon  \Delta t \|  \Delta  \tilde{e}_{\textbf{m}}^{k+1} \|_{L^2}^2 + (   6 \epsilon + \frac{ \gamma \tau_l + | \beta | \tau_l }{2} )
\Delta t \|  \Delta  \tilde{e}_{\textbf{m}}^{k} \|_{L^2}^2  +  \frac{ 2 \epsilon  }{  l}  \sum\limits_{i=0}^{l-1}  \Delta t \| \Delta \tilde{e}_{\textbf{m}}^{k-i} \|_{L^2}^2 \\
& + C  \Delta t \|  \nabla  B_l( \tilde{e}_{\textbf{m}}^{k} ) \|_{L^2}^2  + C \Delta t ( C_0 \Delta t )^{2q}  + C \Delta t \| \nabla B_l(  \textbf{m}( t^{k} )  ) \|_{L^6}^4 \| \nabla B_l( \tilde{e}_{\textbf{m}}^{k} ) \|^2 _{L^2} \\
& + C ( \Delta t )^{2l} \int_{ k+1-l }^{k+1} 
\left( \|  \frac{ \partial ^{l} \textbf{m} }{\partial t ^l }(s) \|_{W^{1,3}}^2 + \|  \frac{ \partial ^{l+1} \textbf{m} }{\partial t ^{l+1} }(s) \|_{L^2}^2 +  \|  \frac{ \partial ^{l} \textbf{m} }{\partial t ^l }(s) \|_{H^2}^2 \right) ds ,
  \endaligned
\end{equation}
Using the condition \eqref{e_beta_value2} and setting $ \epsilon = \frac{ \gamma ( 1- \tau_l ) - | \beta |  \tau_l  }{ 30 }  $, we have
$$  \gamma ( 1- \tau_l ) - | \beta | \tau_l -15 \epsilon >0 . $$
Summing \eqref{e_error14_semi implicit} over $k$ from $0$ to $n$ and applying the discrete Gronwall Lemma \ref{lem: gronwall2}, we can arrive at
 \begin{equation}\label{e_error_final_H2_semi implicit}
\aligned
& \|  \nabla \tilde{e}_{\textbf{m}}^{n+1} \|^2  + \Delta t \sum\limits_{k=0}^{n} \| \Delta \tilde{e}_{\textbf{m}}^{k+1} \|^2 
\leq 
\begin{cases}
C_1 \left(1+C_0^4 (\Delta t)^2 \right) (\Delta t)^2, \ \ l=1, \\
C_1 \left(1+C_0^{2l} \right) (\Delta t)^{2l}, \ \ l=2,3,4,5,
\end{cases}
\endaligned
\end{equation}  
where $C_1$ is independent of $C_0$ and $\Delta t$. Then using exactly the similar procedure in Steps 2 and 3 in Section 4, we can easily obtain the desired result \eqref{e_final_semi}. 
}
\end{proof}

 \section{Numerical experiments}
In this section, we carry out some numerical experiments to verify the accuracy and stability of the  high-order IMEX-GSAV schemes  \eqref{e_High-order1}-\eqref{e_High-order_eta} for the Landau-Lifshitz equation. We assume periodic boundary conditions and use the Fourier-spectral method for spatial discretization, which reduces the equation 
\eqref{e_High-order1} to a diagonal system in the frequency space so it can be easily implemented.

\subsection{Convergence rate with a known exact solution}
We test the convergence rate for the Landau-Lifshitz equation \eqref{e_LLmodel} with an external force so that the exact solution is
\begin{equation}
	\begin{aligned}
		m_1^e(x,y,t)=&\sin(t+x)\cos(t+y),\\
		m_2^e(x,y,t)=&\cos(t+x)\cos(t+y),\\
		m_3^e(x,y,t)=&\sin(t+y).\\
	\end{aligned}
\end{equation}

We set $S=0$, $K_0=1$, $w=1$ and  $\Omega= [0, 2\pi)^2$ with periodic boundary conditions and use the Fourier spectral method with
$64\times64$ modes for spatial approximation so that the spatial discretization error is negligible. 
For the case of $\beta=0$, we plot in Figures \ref{error1a}-\ref{error1d} the convergence rates in $l^{\infty}(0,T;H^1(\Omega)) \bigcap l^{2}(0,T;H^2(\Omega))$ with $l=1,2,3,4$ at $T=0.5$, which are in good agreement with  Theorem \ref{the: error_estimate_final}.  
We also plot  convergence rates for the first- to fourth-order schemes in Figures \ref{error3a}-\ref{error3d} for the case of $\beta=0.5$.

\begin{figure}[htp]
	\centering
	\subfigure[BDF1 vs. errors]{
		\begin{minipage}[c]{0.4\textwidth}
			\includegraphics[width=1\textwidth]{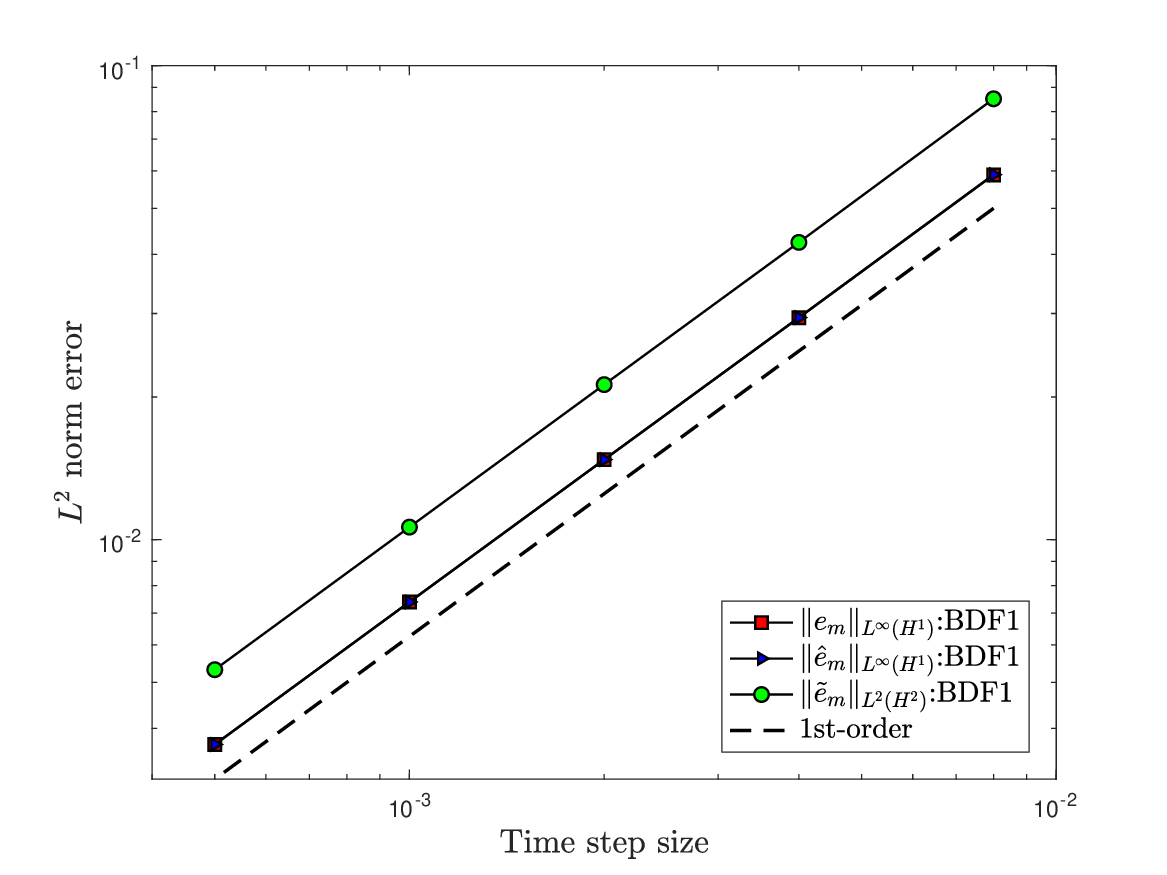}\label{error1a}
		\end{minipage}
	}
	\subfigure[BDF2 vs. errors]{
		\begin{minipage}[c]{0.4\textwidth}
			\includegraphics[width=1\textwidth]{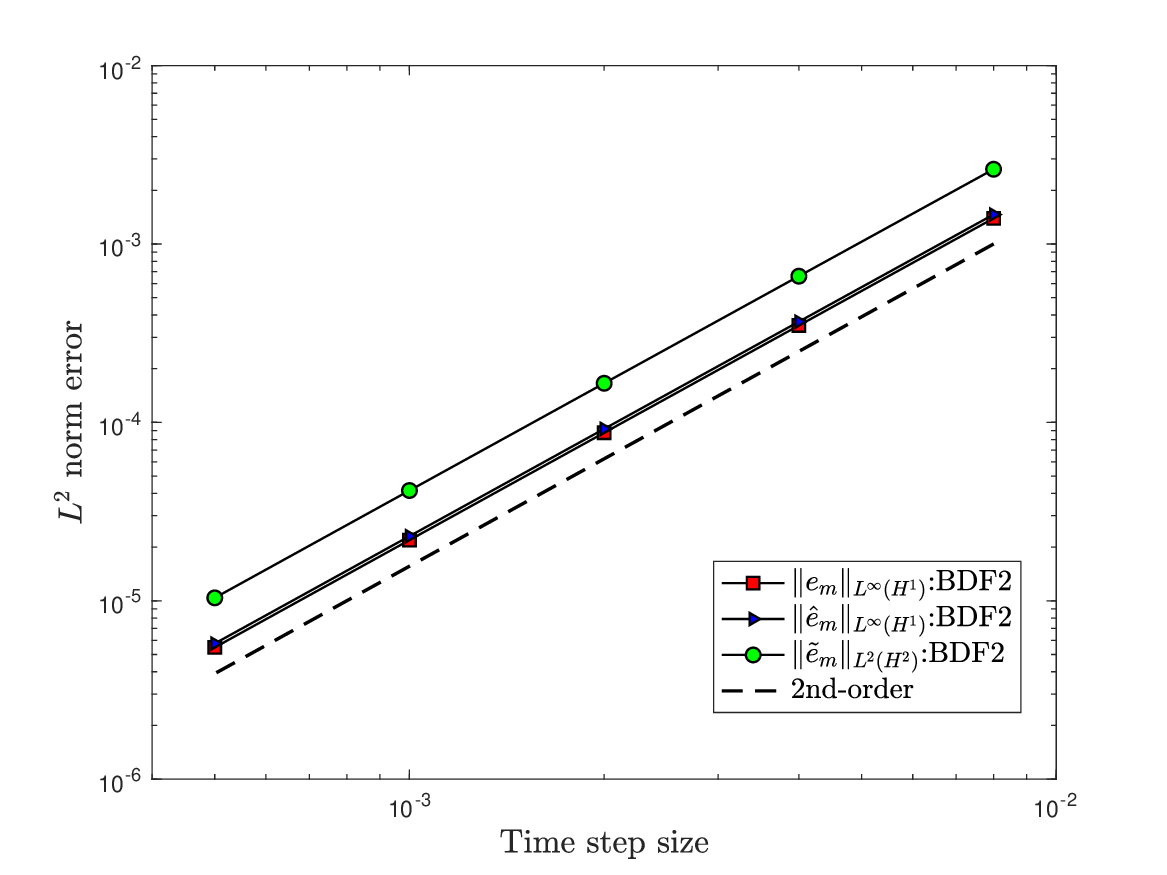}\label{error1b}
		\end{minipage}
	}
	\subfigure[BDF3 vs. errors]{
		\begin{minipage}[c]{0.4\textwidth}
			\includegraphics[width=1\textwidth]{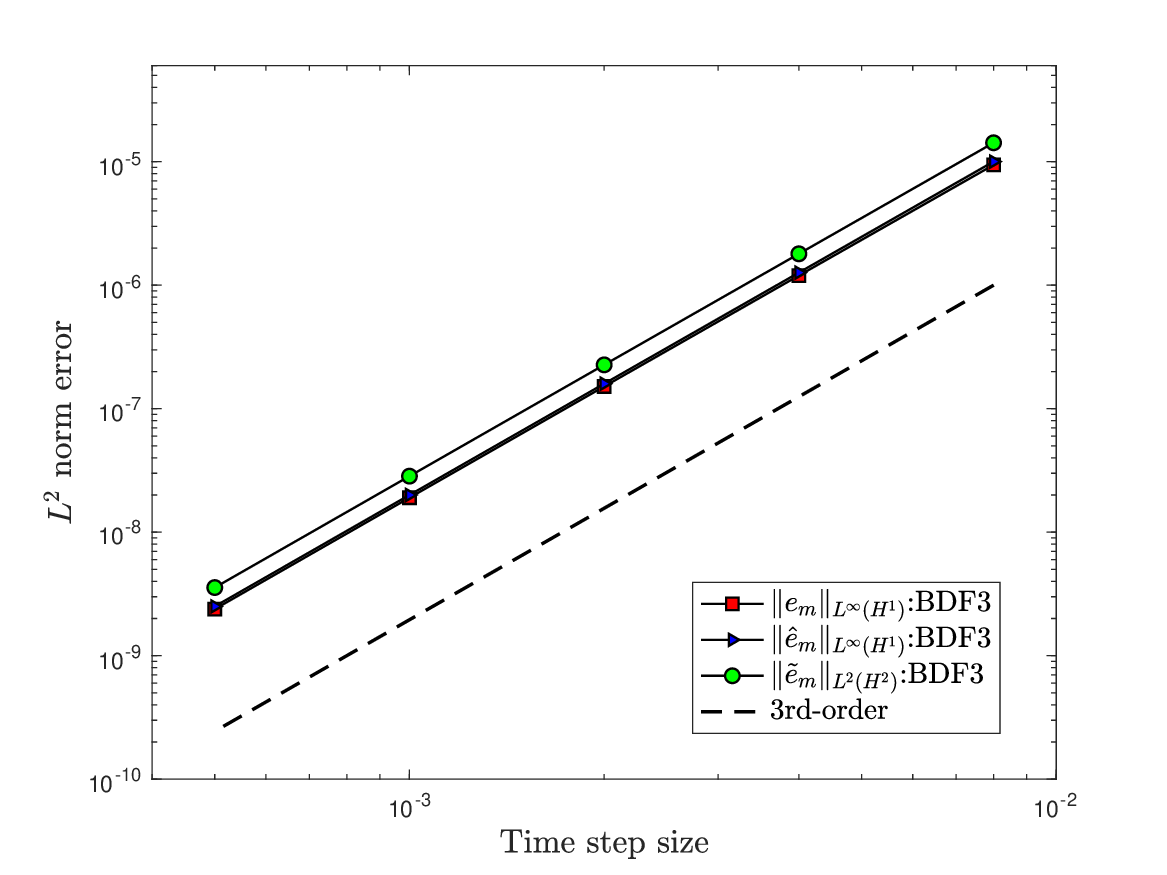}\label{error1c}
		\end{minipage}
	}
	\subfigure[BDF4 vs. errors]{
		\begin{minipage}[c]{0.4\textwidth}
			\includegraphics[width=1\textwidth]{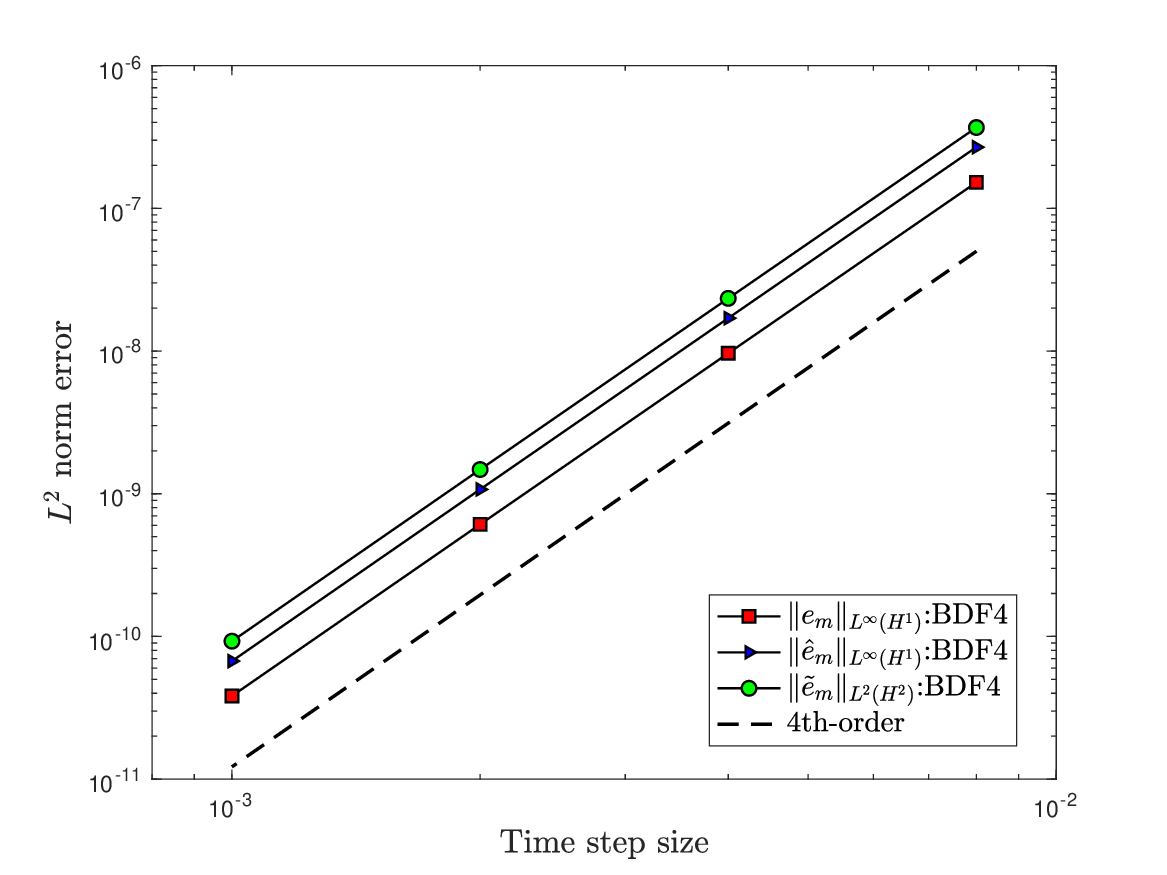}\label{error1d}
		\end{minipage}
	}
	{\caption{Numerical convergence rate of the first- to fourth-order schemes with $\beta=0$ in Example 1.}\label{error1}}
\end{figure}

\begin{figure}[htp]
	\centering
	\subfigure[BDF1 vs. errors]{
		\begin{minipage}[c]{0.4\textwidth}
			\includegraphics[width=1\textwidth]{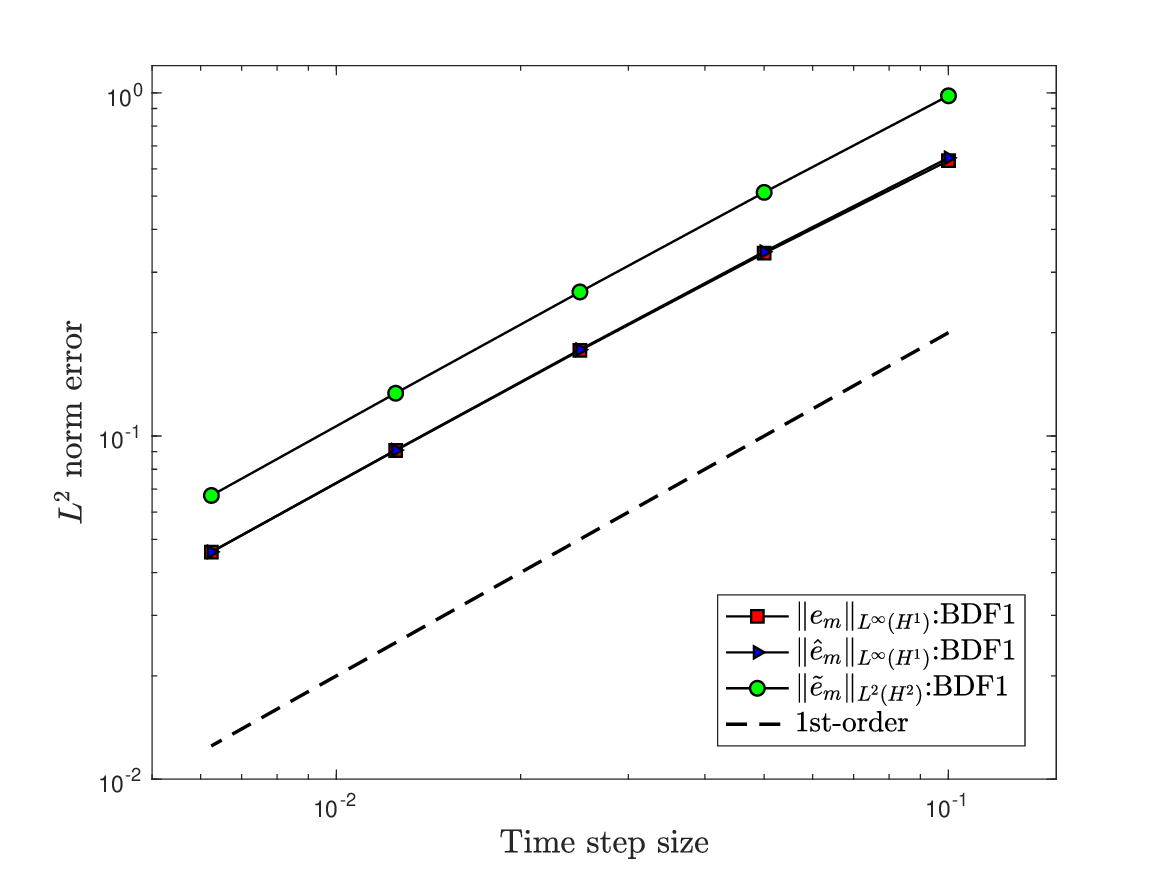}\label{error3a}
		\end{minipage}
	}
	\subfigure[BDF2 vs. errors]{
		\begin{minipage}[c]{0.4\textwidth}
			\includegraphics[width=1\textwidth]{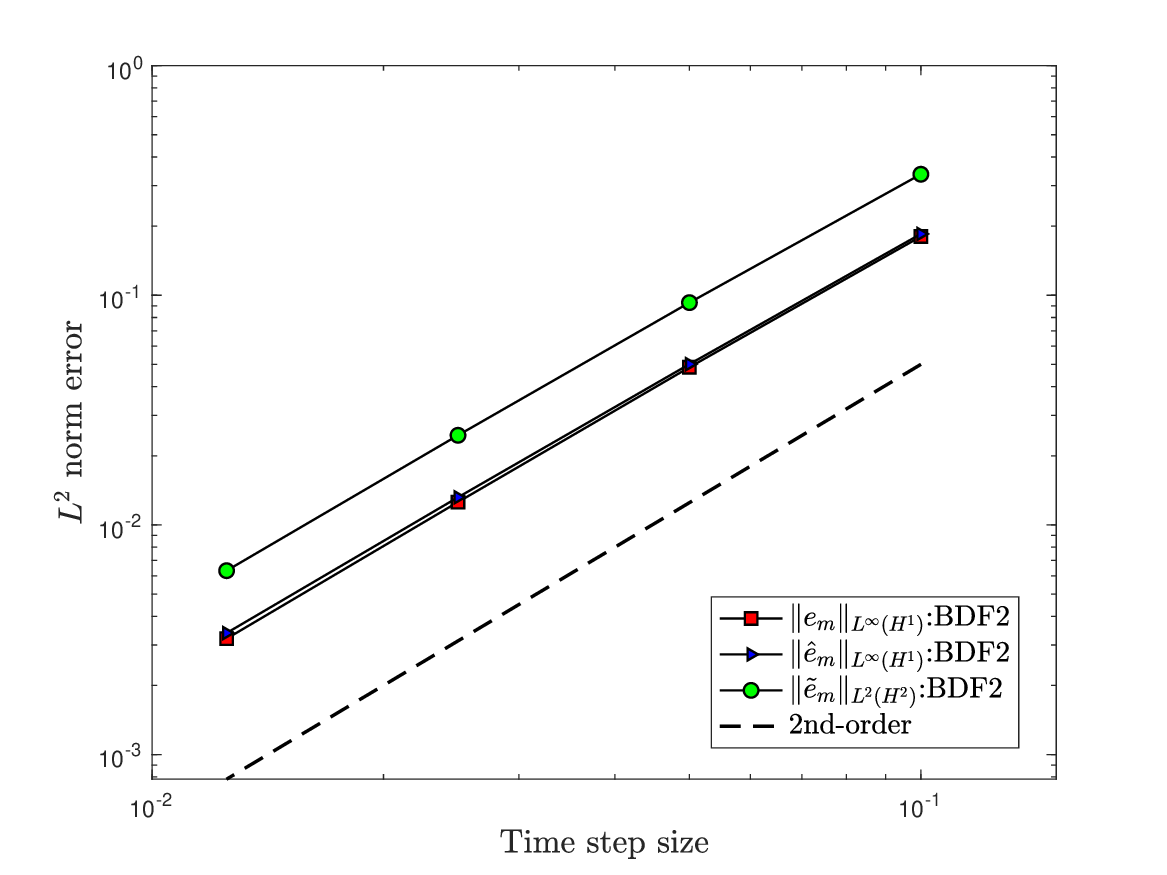}\label{error3b}
		\end{minipage}
	}
	\subfigure[BDF3 vs. errors]{
		\begin{minipage}[c]{0.4\textwidth}
			\includegraphics[width=1\textwidth]{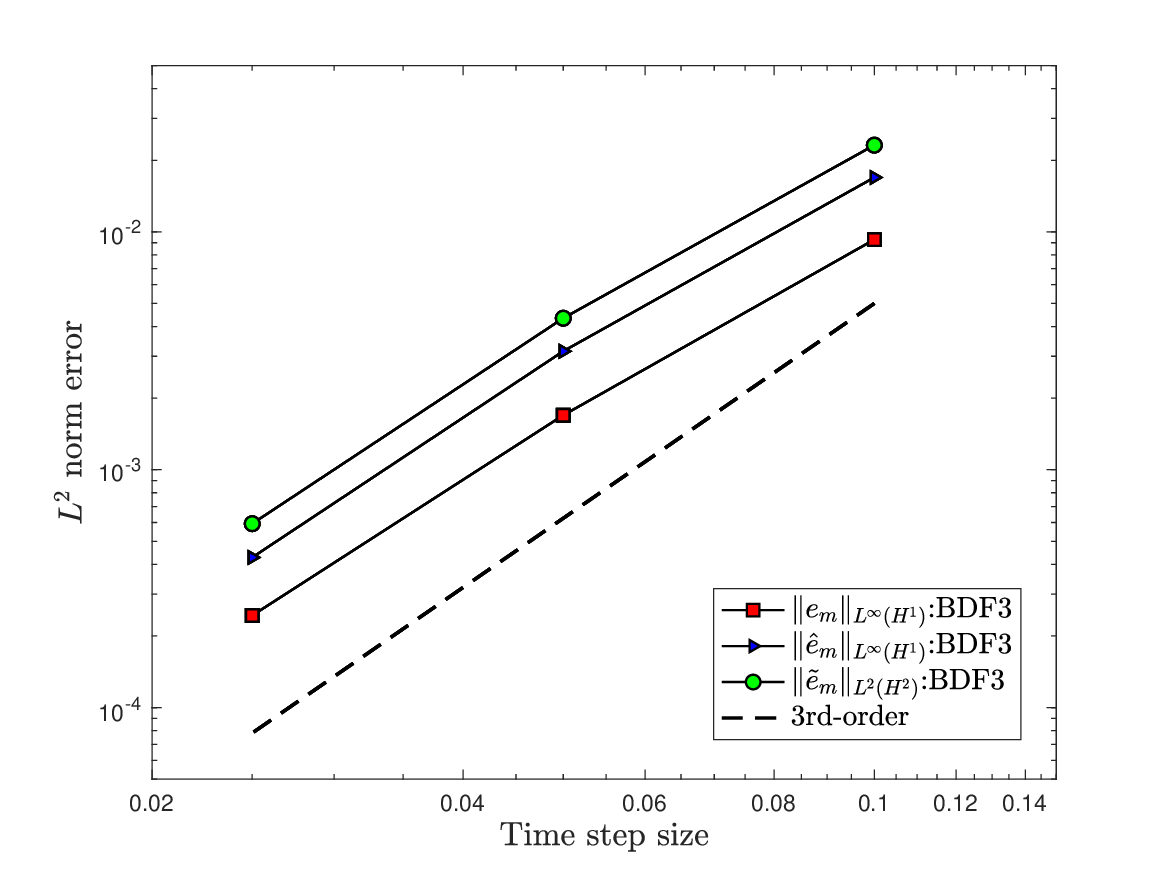}\label{error3c}
		\end{minipage}
	}
	\subfigure[BDF4 vs. errors]{
		\begin{minipage}[c]{0.4\textwidth}
			\includegraphics[width=1\textwidth]{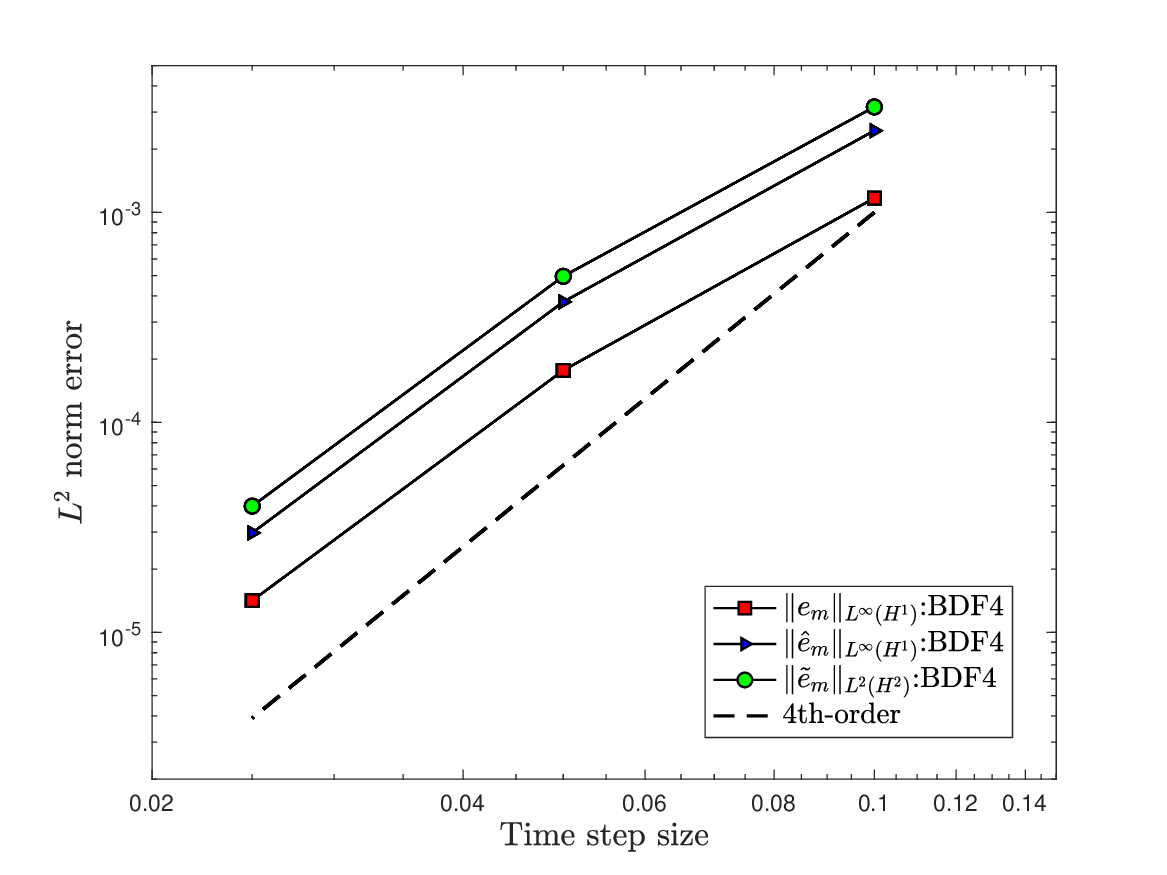}\label{error3d}
		\end{minipage}
	}
	{\caption{Numerical convergence rate of the first- to fourth-order schemes with $\beta=0.5$ in Example 1.}\label{error3}}
\end{figure}

\subsection{Convergence rate with an unknown exact solution}

We consider the Landau-Lifshitz equation with the initial condition
\begin{equation}
	\begin{aligned}
		m_1(x,y,0)=&\cos(x)\cos(y)\sin(0.1),\\
		m_2(x,y,0)=&\cos(x)\cos(y)\cos(0.1),\\
		m_3(x,y,0)=&(1-\cos^2(x)\cos^2(y))^{1/2},\\
	\end{aligned}
\end{equation}
in $\Omega= [0, 2\pi)^2$.
We set $S=0$, $\beta=0$, $K_0=1$, $w=2$  and  also use $256$ Fourier modes in each direction for spatial approximation.
The exact solution is unknown so we compute a reference solution by the fourth-order method with a small time step $\Delta t=3.125E-6$. 
 We plot the convergence rates  in $l^{\infty}(0,T;H^1(\Omega)) \bigcap l^{2}(0,T;H^2(\Omega))$ norm for the projected and unprojected magnetization by using first- to fourth-order schemes in Figures \ref{error2a}-\ref{error2d} respectively. We observe that the expected orders of convergence are achieved, although the errors in $ l^{2}(0,T;H^2(\Omega))$ are several orders of magnitude larger than those in $l^{\infty}(0,T;H^1(\Omega)) $.

\begin{figure}[htp]
	\centering
	\subfigure[BDF1 vs. errors]{
		\begin{minipage}[c]{0.4\textwidth}
			\includegraphics[width=1\textwidth]{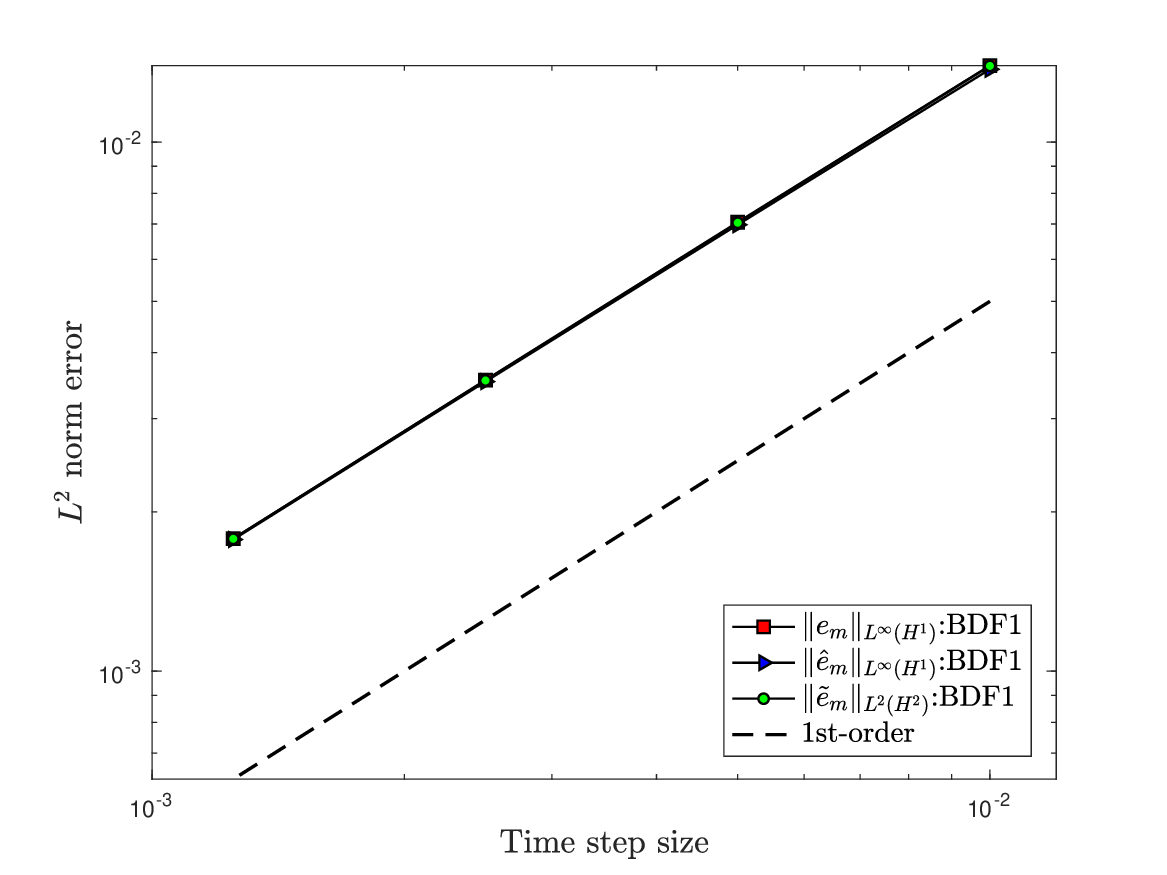}\label{error2a}
		\end{minipage}
	}
	\subfigure[BDF2 vs. errors]{
		\begin{minipage}[c]{0.4\textwidth}
			\includegraphics[width=1\textwidth]{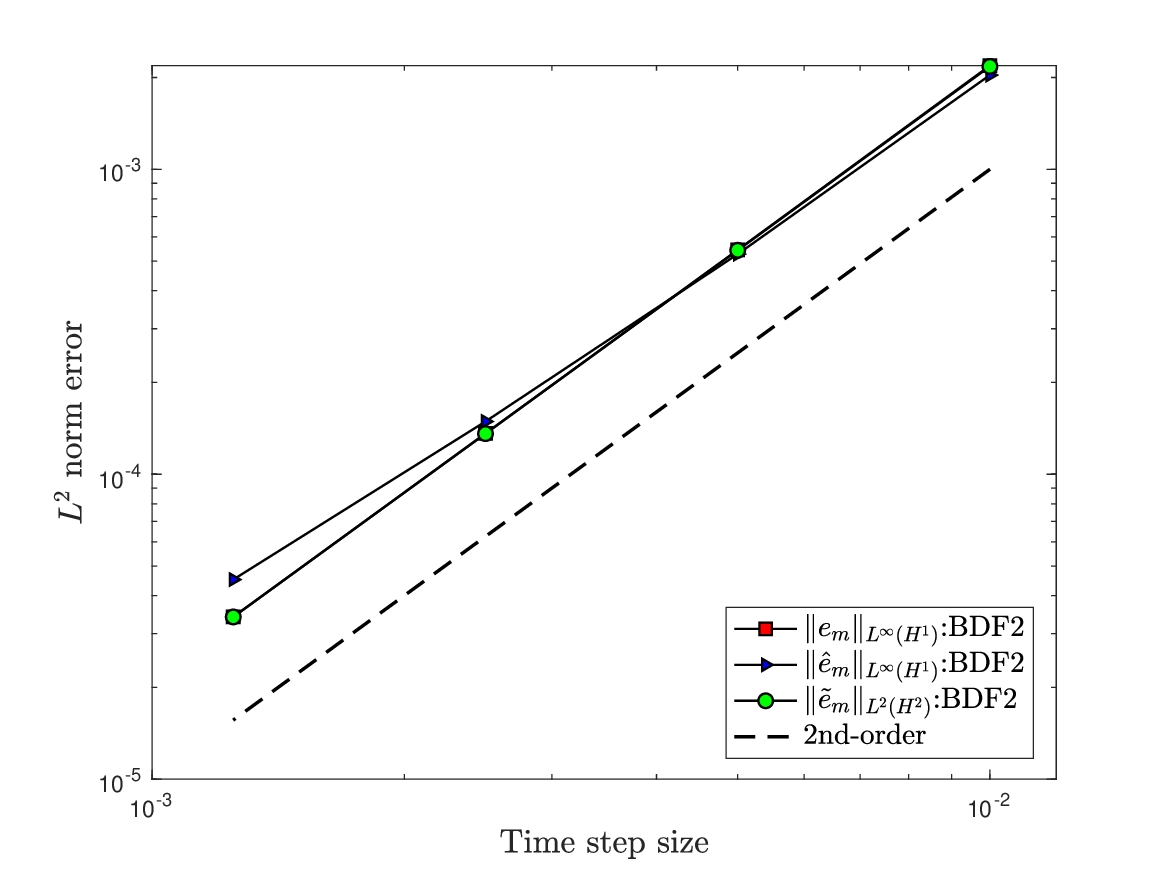}\label{error2b}
		\end{minipage}
	}
		\subfigure[BDF3 vs. errors]{
				\begin{minipage}[c]{0.4\textwidth}
						\includegraphics[width=1\textwidth]{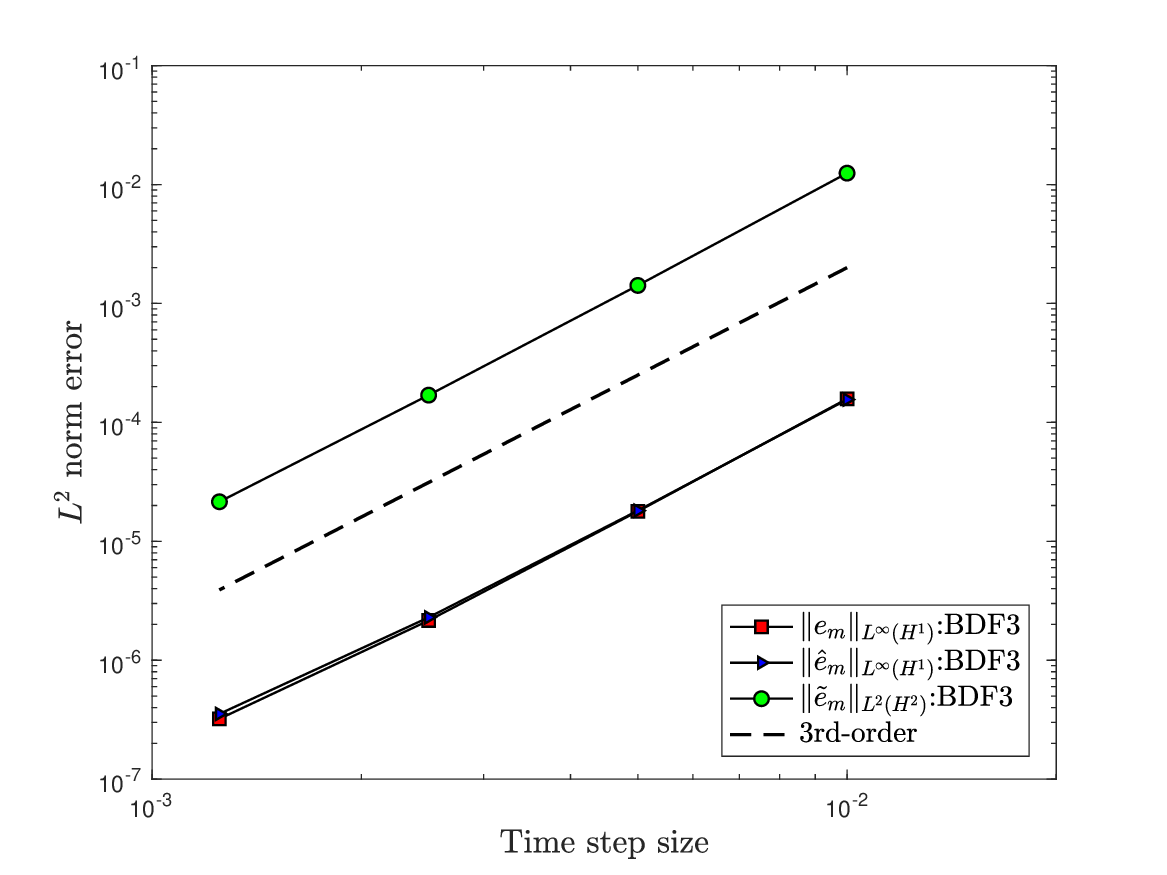}\label{error2c}
					\end{minipage}
			}
		\subfigure[BDF4 vs. errors]{
				\begin{minipage}[c]{0.4\textwidth}
						\includegraphics[width=1\textwidth]{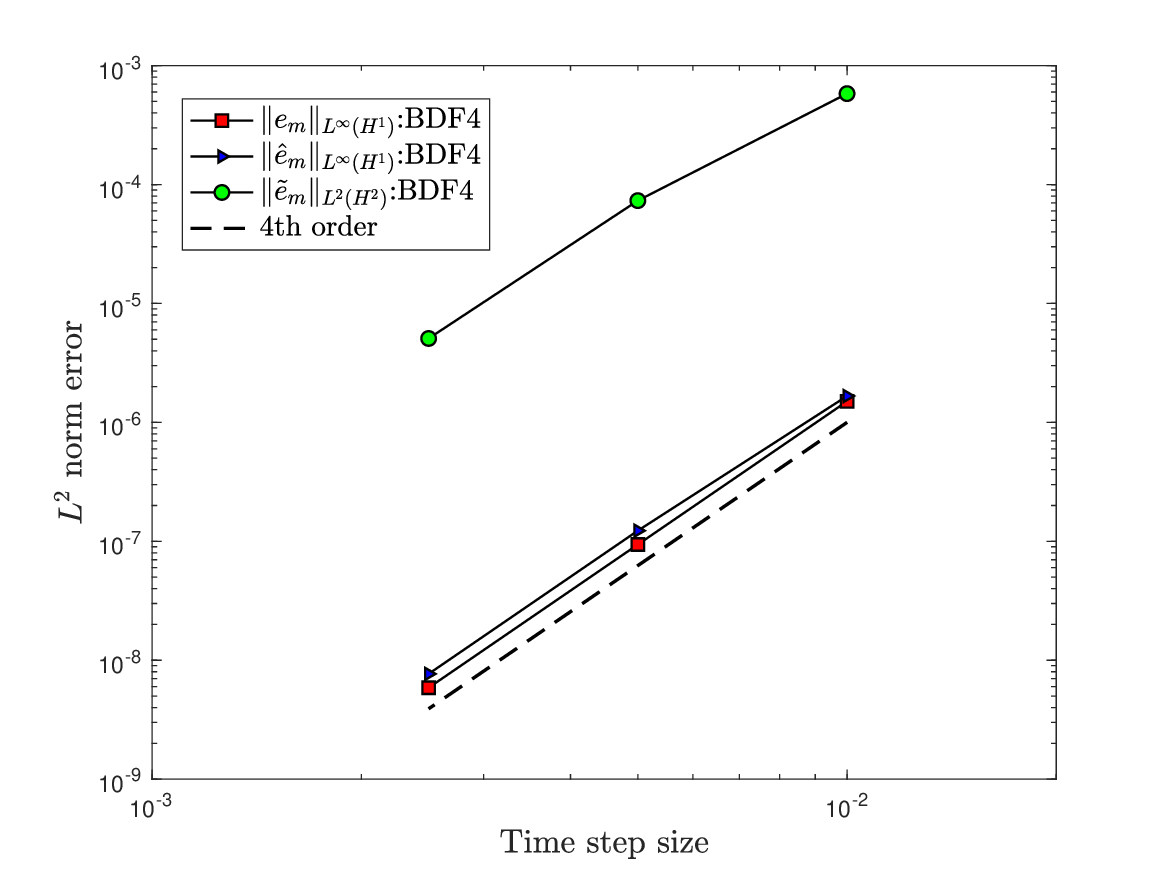}\label{error2d}
					\end{minipage}
			}
	{\caption{Numerical convergence rate of the first- to fourth-order schemes in Example 2.}\label{error2}}
\end{figure}	


\subsection{Phenomenon of blow up} 
As indicated in \cite{chen1998evolution,bartels2008numerical,an2021optimal},  
when the initial value is sufficiently smooth, it is known that \eqref{e_original model} has a unique smooth solution in short time which may blow up at some finite time. So we investigate the possible blowup of the general Landau-Lifshitz equation \eqref{e_original model} with certain smooth initial condition in this subsection. The initial data $\textbf{\textbf{m}}_0$ is defined by
\begin{equation}
\textbf{\textbf{m}}_0(\textbf{x})=	\left \{
	\begin{aligned}
		& (0,0,-1)^T,& t>0,\;|\textbf{x}|\geq \frac{1}{2},
		\\
		&   (\frac{2x_1A}{A^2+|\textbf{x}|^2},
		\frac{2x_2A}{A^2+|\textbf{x}|^2},
		\frac{A^2-|\textbf{x}|^2}{A^2+|\textbf{x}|^2})^T
		& t>0,\;|\textbf{x}|\leq \frac{1}{2},
	\end{aligned}
	\right.
\end{equation}
where $A=(1-2|\textbf{x}|)^4$. We use $128$ Fourier modes in each direction for spatial approximation in $\Omega= [-1/2, 1/2)^2$.
The parameters are set as $\Delta t=10^{-6}, \ S=0.5,\ \beta=1$, $w=1$  and $K_0=0.1$. 


Numerical simulation for the orthogonal projection of the vector field $\textbf{m}^{n+1}$ on the $x_1x_2-$plane and close-up pictures of $\textbf{m}^{n+1}$ near the origin at $t=0,0.01,0.08,0.35,0.501,0.51,0.52, 0.55, 0.6$ are shown in Figures \ref{orthogonal_example2}-\ref{close_example222} by using \eqref{e_High-order1}-\eqref{e_High-order_eta} with $l=2$.  It can be observed that $\textbf{m}^{n+1}$ preserves $(0,0,1)^t$ at the origin and gradually turns down to $(0,0,-1)^t$ near the origin,
which is consistent with the phenomenon of blowup presented in \cite{bartels2008numerical,an2021optimal}.
 We can observe that the energy dissipation law is preserved well in Figure \ref{fig3_example3a} with different Fourier modes $N=32,64,128$. 
In Figure \ref{fig3_example3}, we plot  the evolution of $|\textbf{\textbf{m}}|_{W^{1,\infty}}$ with respect to time, which indicate that the gradient of $\textbf{m}^{n+1}$ near the origin appears to  going to infinity as we refine the mesh. Thus the solution of the Landau-Lifshitz equation \eqref{e_original model} with this smooth initial data may blow up around the origin in a finite time. 
	\begin{figure}[htp]
	\centering 
	\subfigure[$T$=0]{
		\begin{minipage}[c]{0.3\textwidth}
			\includegraphics[width=1\textwidth]{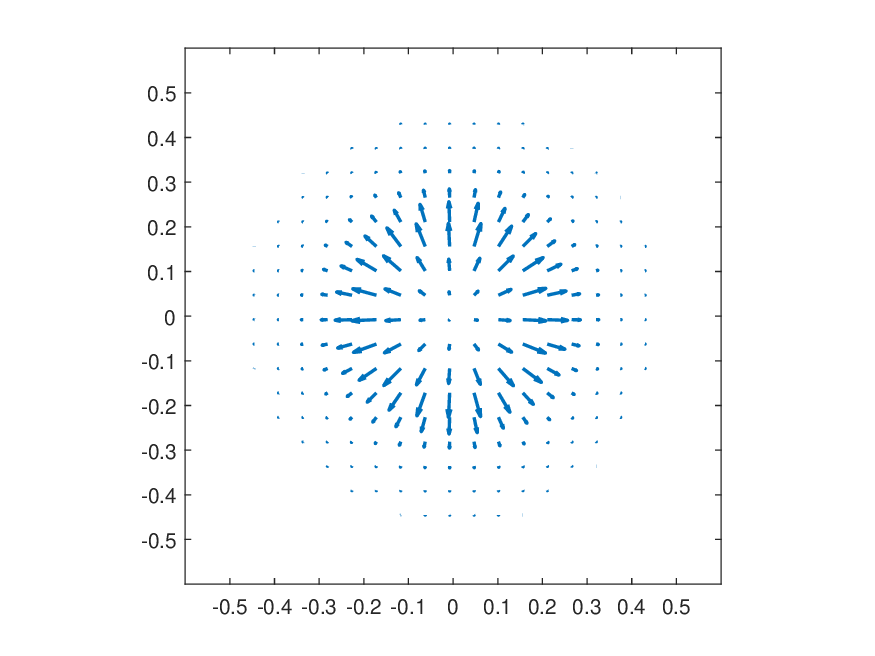}
		\end{minipage}
	}
	\subfigure[$T$=0.01]{
		\begin{minipage}[c]{0.3\textwidth}
			\includegraphics[width=1\textwidth]{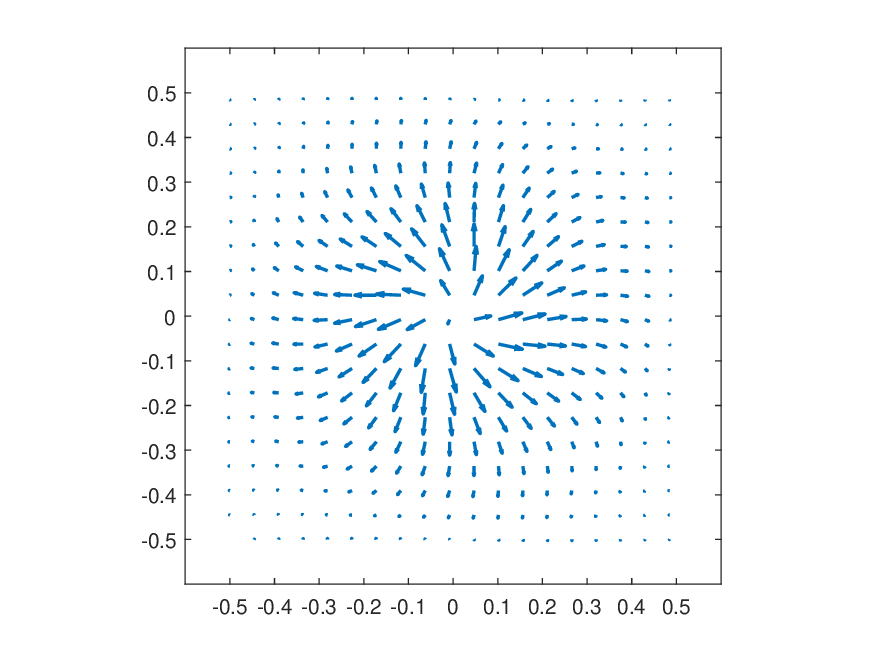}
		\end{minipage}
	}
	\subfigure[$T$=0.08]{
		\begin{minipage}[c]{0.3\textwidth}
			\includegraphics[width=1\textwidth]{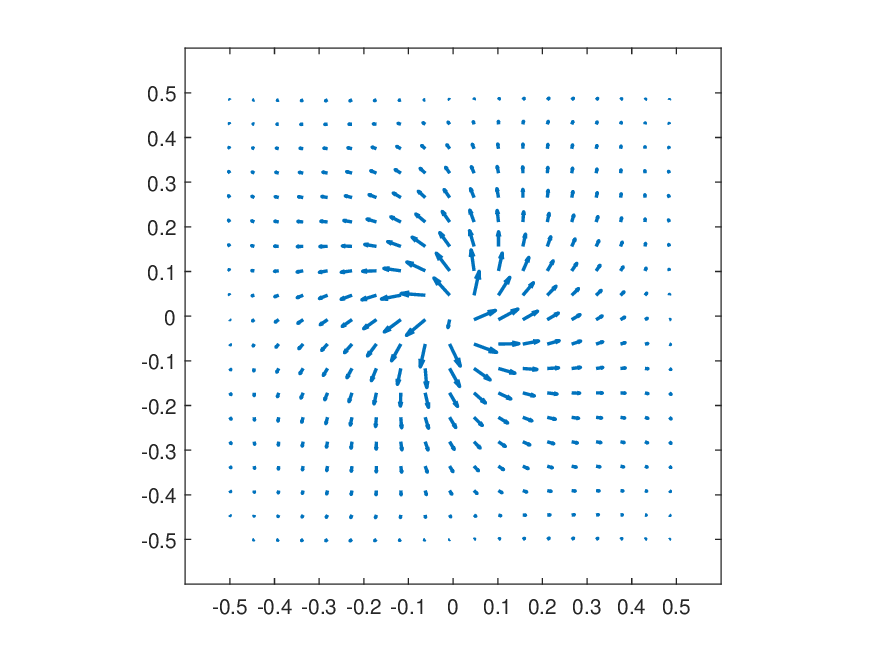}
		\end{minipage}
	}
	\subfigure[$T$=0.35]{
		\begin{minipage}[c]{0.3\textwidth}
			\includegraphics[width=1\textwidth]{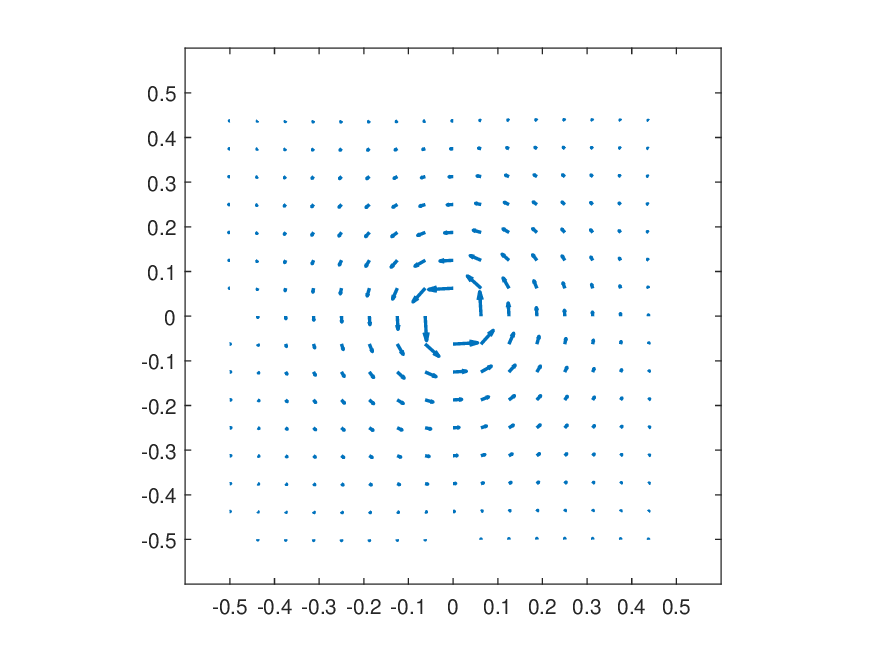}
		\end{minipage}
	}
	\subfigure[$T$=0.501]{
		\begin{minipage}[c]{0.3\textwidth}
			\includegraphics[width=1\textwidth]{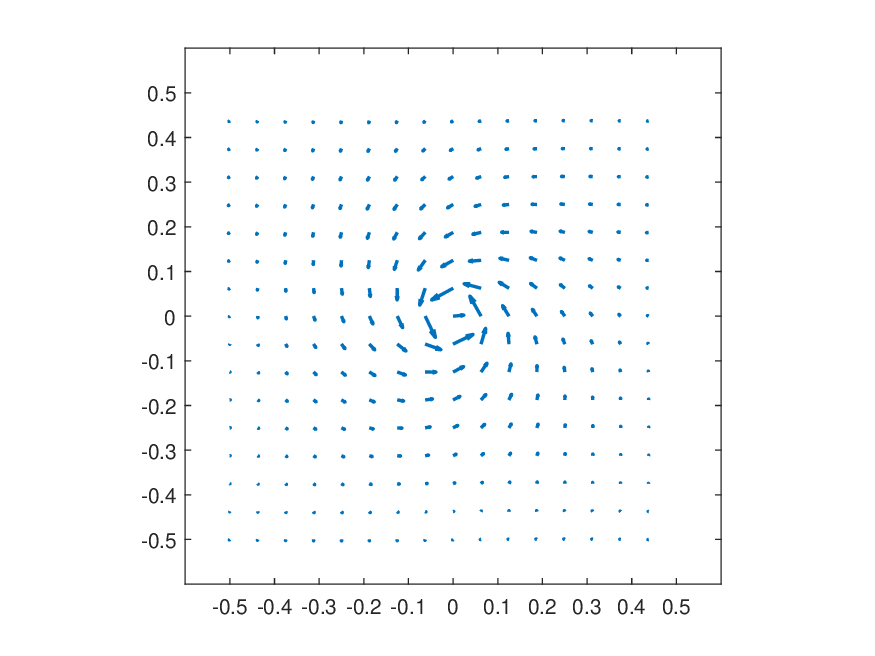}
		\end{minipage}
	}
	\subfigure[$T$=0.51]{
		\begin{minipage}[c]{0.3\textwidth}
			\includegraphics[width=1\textwidth]{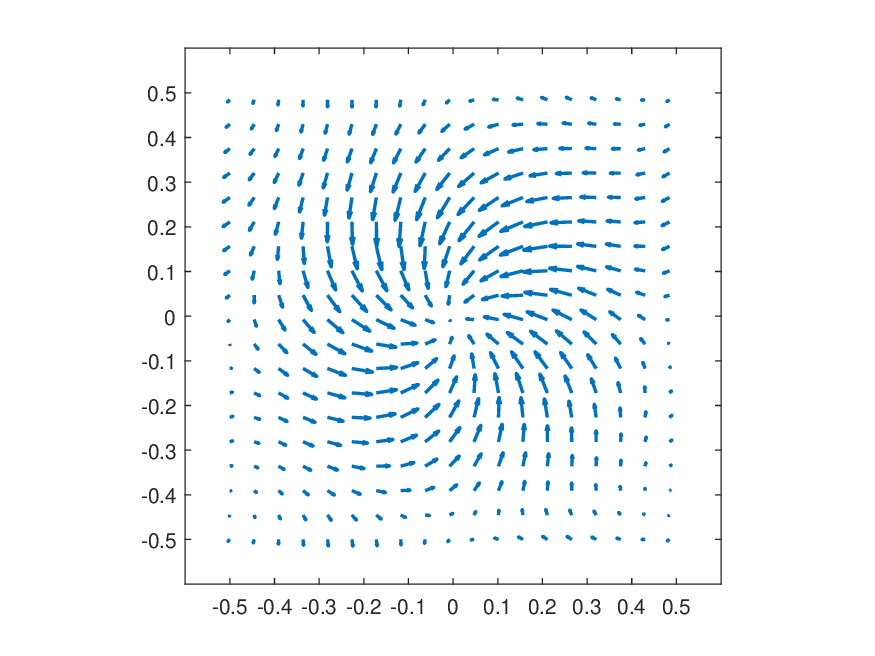}
		\end{minipage}
	}
	\subfigure[$T$=0.52]{
		\begin{minipage}[c]{0.3\textwidth}
			\includegraphics[width=1\textwidth]{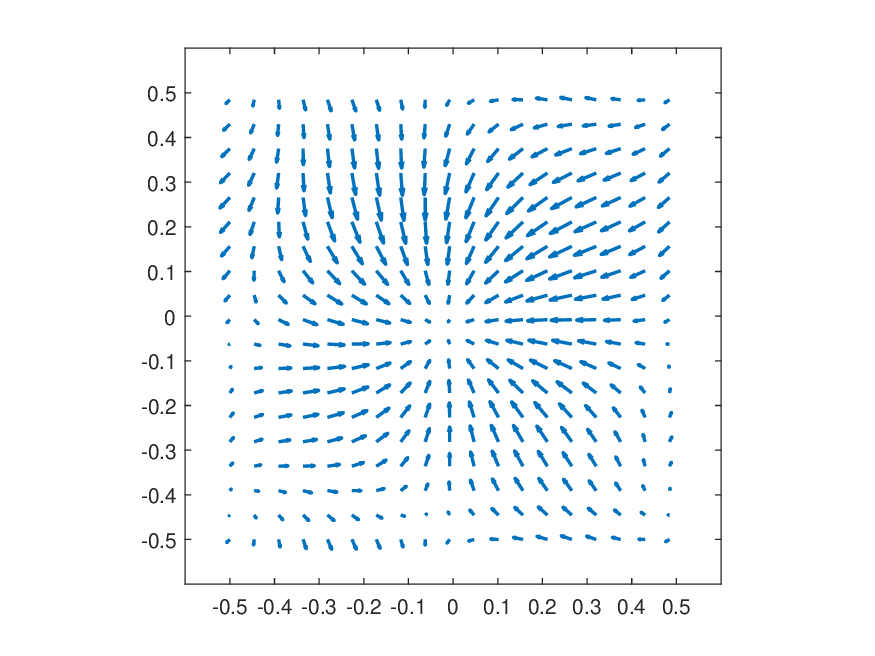}
		\end{minipage}
	}
	\subfigure[$T$=0.55]{
		\begin{minipage}[c]{0.3\textwidth}
			\includegraphics[width=1\textwidth]{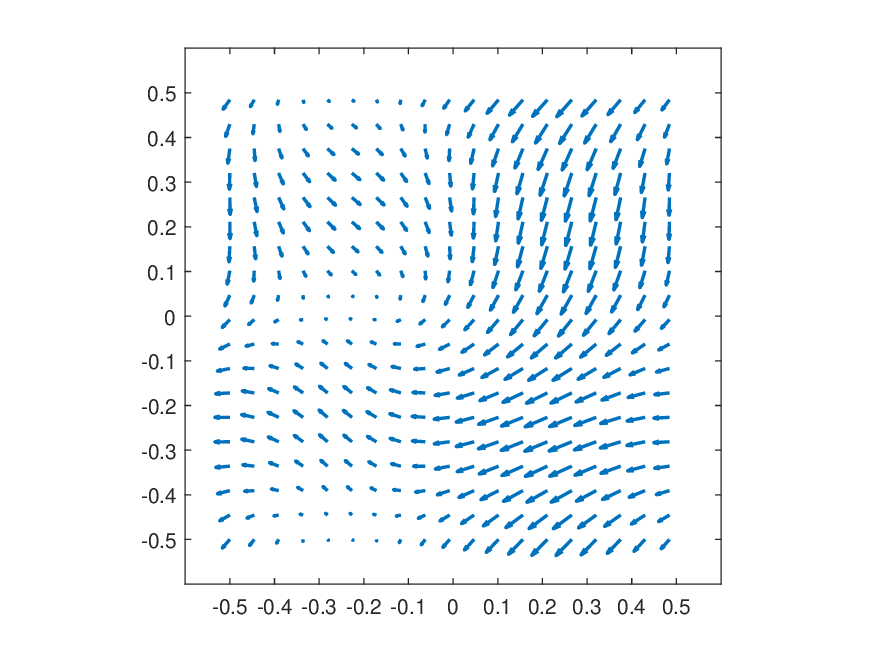}
		\end{minipage}
	}
	\subfigure[$T$=0.6]{
		\begin{minipage}[c]{0.3\textwidth}
			\includegraphics[width=1\textwidth]{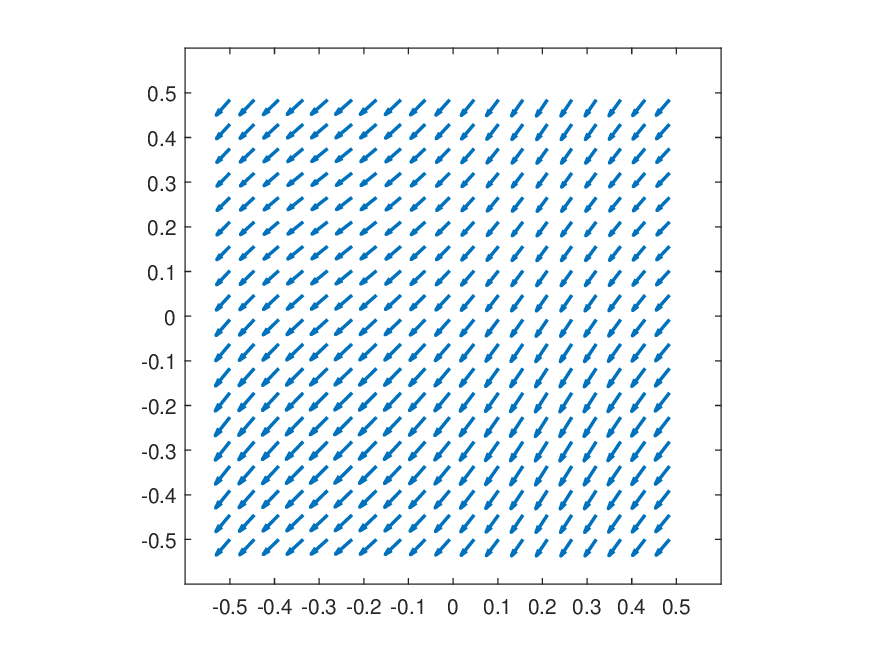}
		\end{minipage}
	}
	{\caption{Numerical magnetization \textbf{\textbf{m}} (projected on $x_1x_2$-plane) with $N=128$.}\label{orthogonal_example2}} 
\end{figure}

\begin{figure}[htp]
	\centering 
	\subfigure[$T$=0.01]{
		\begin{minipage}[c]{0.3\textwidth}
			\includegraphics[width=1.1\textwidth]{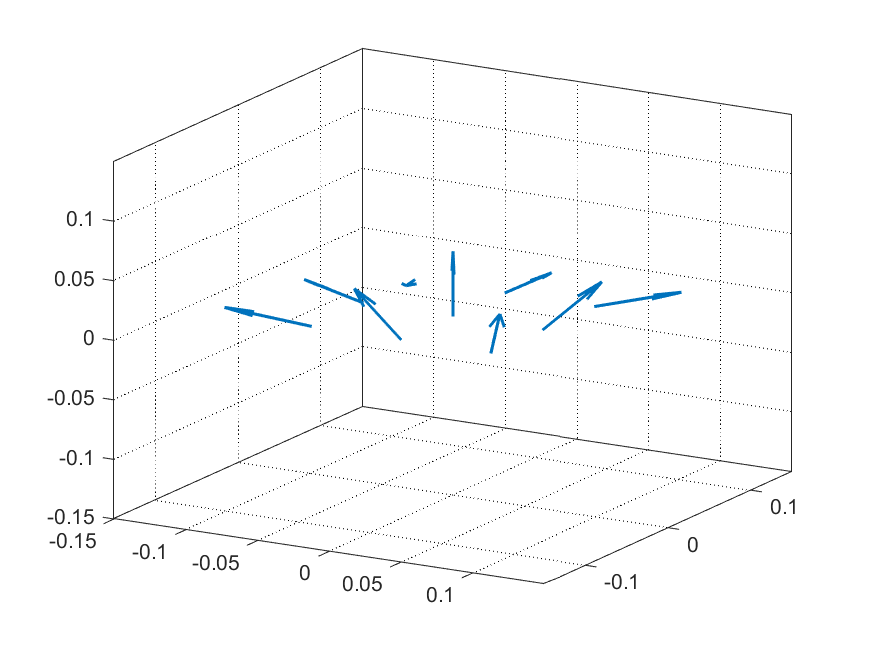}
		\end{minipage}
	}
	\subfigure[$T$=0.08]{
		\begin{minipage}[c]{0.3\textwidth}
			\includegraphics[width=1.1\textwidth]{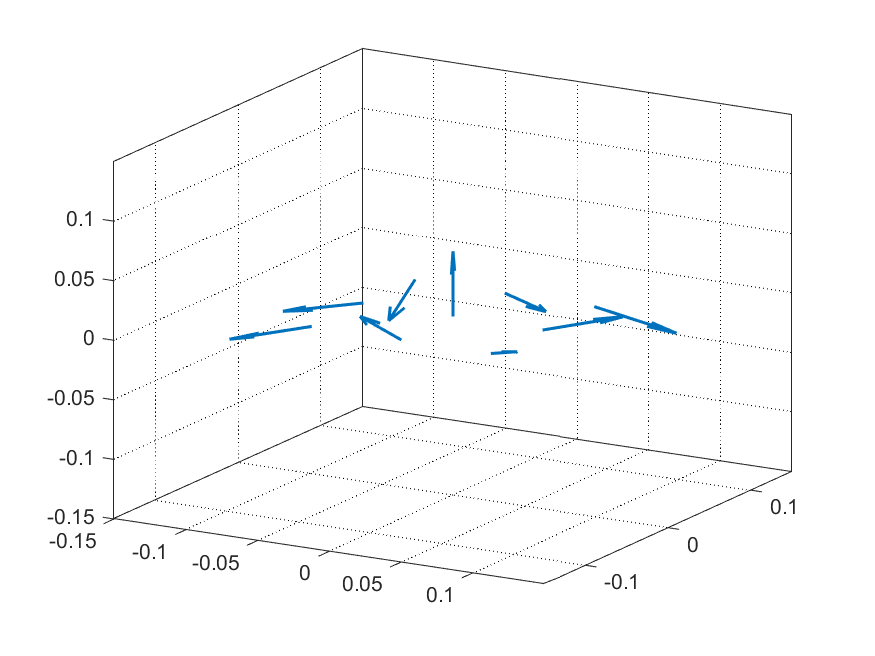}
		\end{minipage}
	}
	\subfigure[$T$=0.35]{
		\begin{minipage}[c]{0.3\textwidth}
			\includegraphics[width=1.1\textwidth]{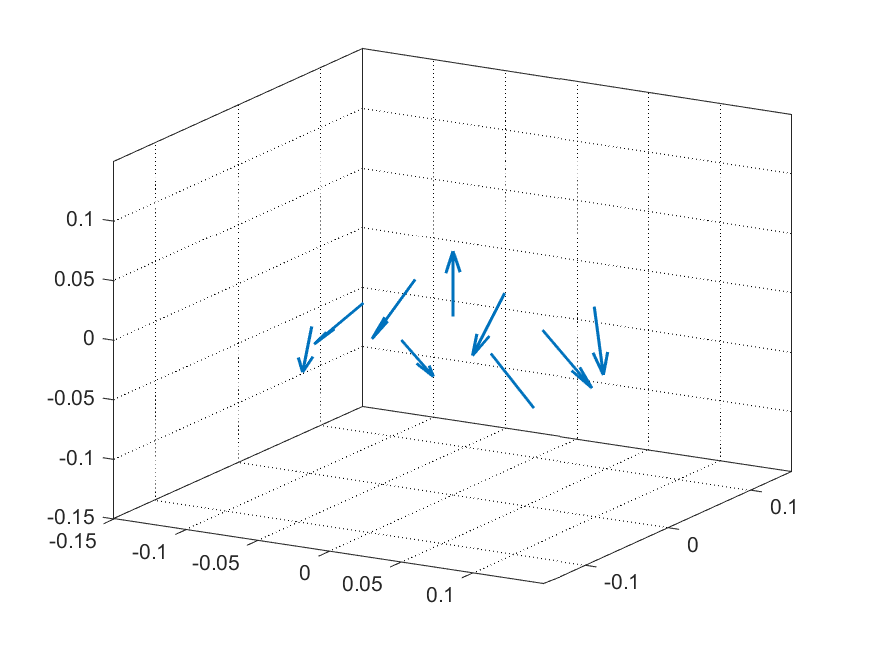}
		\end{minipage}
	}
	\subfigure[$T$=0.501]{
		\begin{minipage}[c]{0.3\textwidth}
			\includegraphics[width=1.1\textwidth]{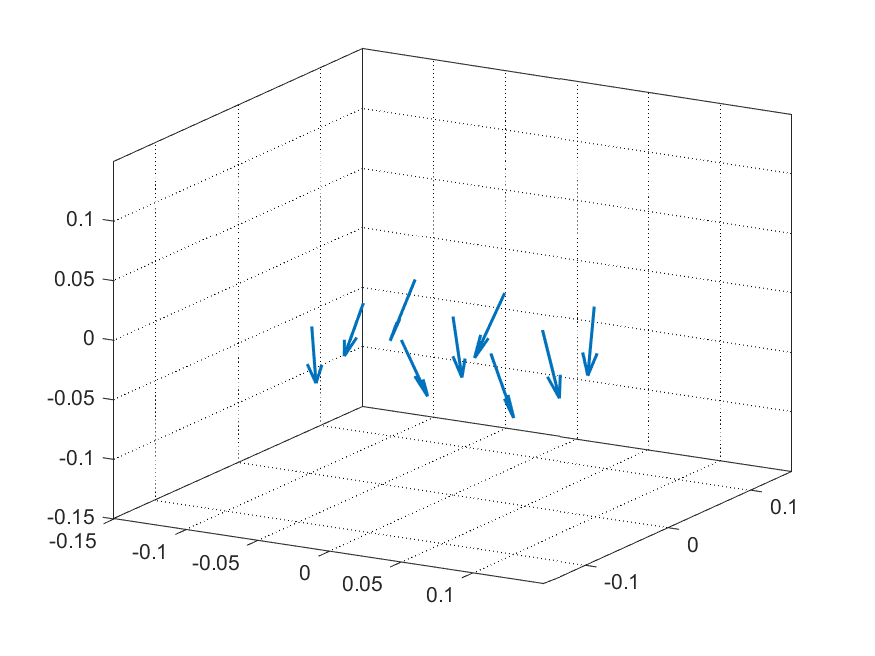}
		\end{minipage}
	}
	\subfigure[$T$=0.51]{
		\begin{minipage}[c]{0.3\textwidth}
			\includegraphics[width=1.1\textwidth]{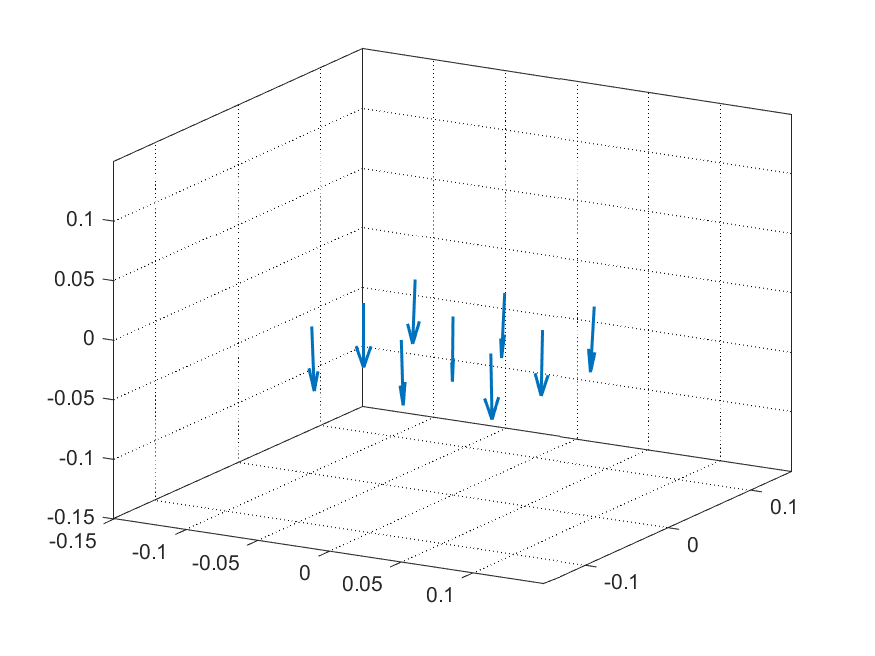}
		\end{minipage}
	}
	\subfigure[$T$=0.6]{
		\begin{minipage}[c]{0.3\textwidth}
			\includegraphics[width=1.1\textwidth]{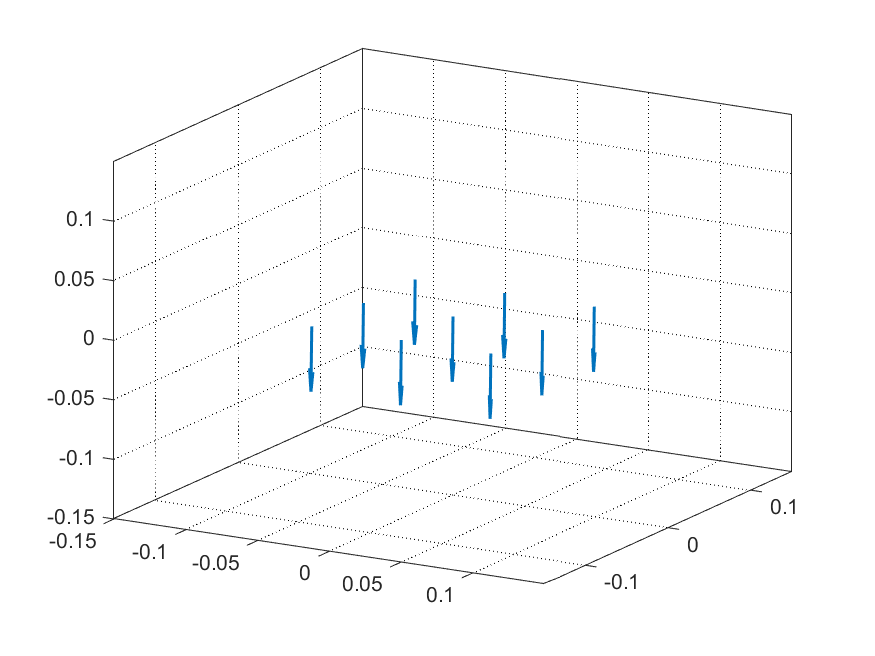}
		\end{minipage}
	}
	{\caption{Numerical magnetization \textbf{\textbf{m}} around the origin with $N=128$.}\label{close_example222}}
\end{figure}

	\begin{figure}[htp]
	\centering 
	\subfigure[]{
		\begin{minipage}[c]{0.45\textwidth}
			\includegraphics[width=1.1\textwidth]{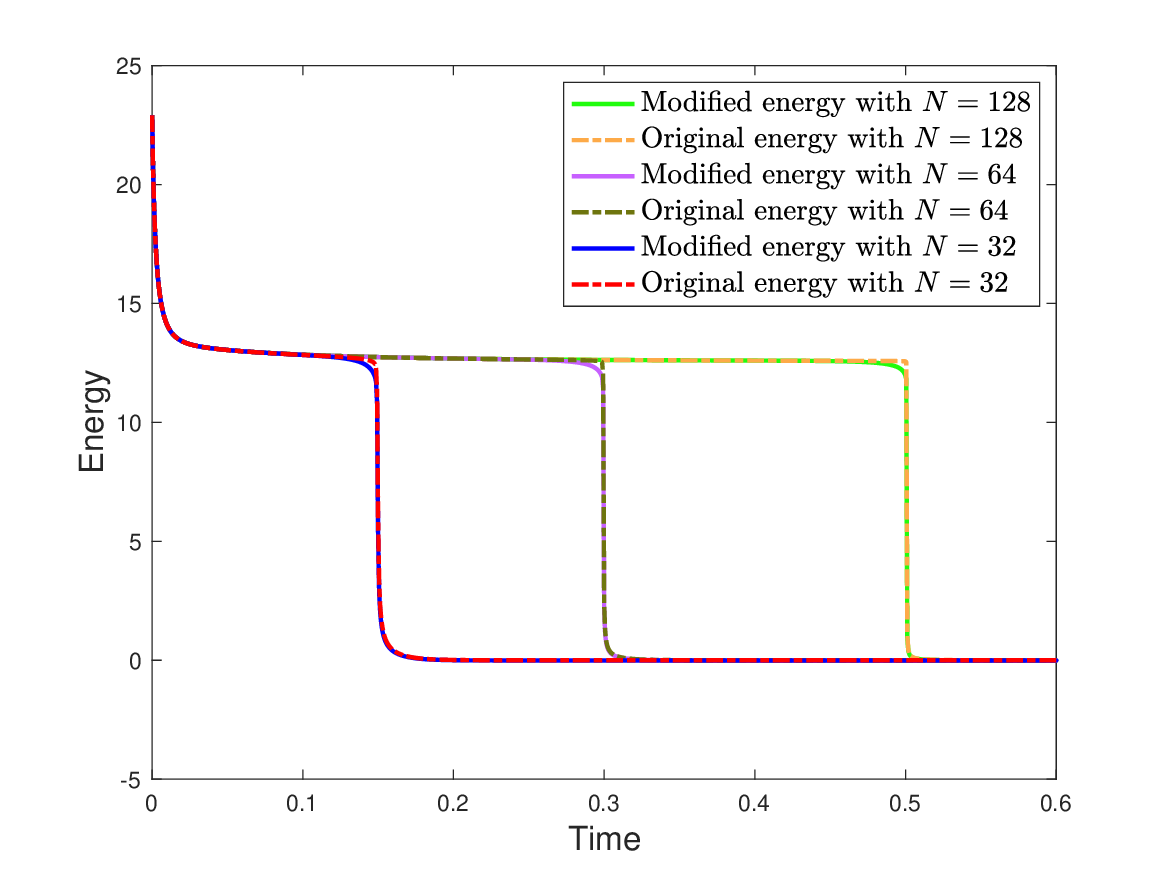}\label{fig3_example3a}
		\end{minipage}
	}
	\subfigure[]{
		\begin{minipage}[c]{0.45\textwidth}
			\includegraphics[width=1.1\textwidth]{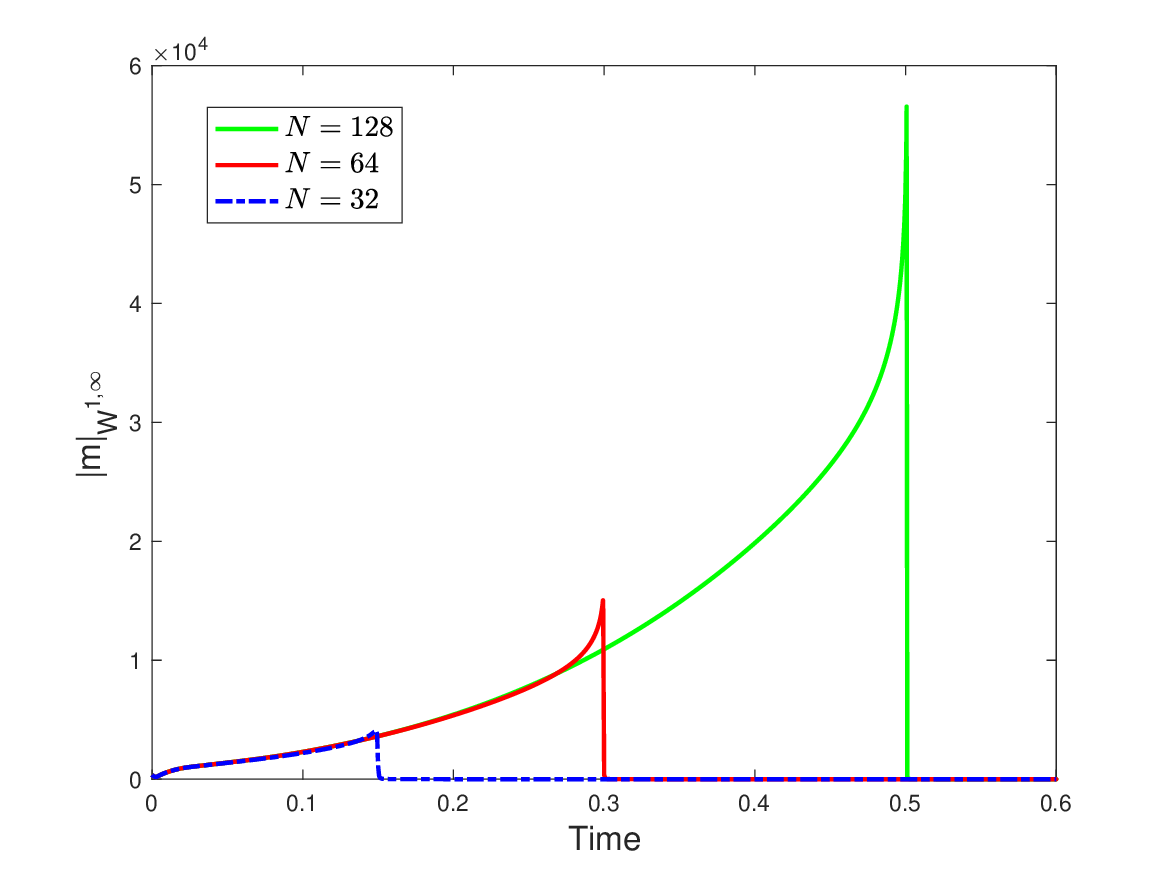}\label{fig3_example3}
		\end{minipage}
	}	{\caption{Evolutions of energy (left) and $|\textbf{\textbf{m}}|_{W^{1,\infty}}$ (right) computed 
		 with different Fourier modes.
	 } }
\end{figure}

\section{Concluding remarks}
We constructed a new class of  high-order implicit-explicit (IMEX) schemes based on the GSAV approach for the Landau-Lifshitz equation. These  schemes are linear, length preserving, and  {\color{black} at each time step only require solving  (a) decoupled  elliptic equations with constant coefficients  when the nonlinear term is treated fully explicitly, or (b) a coupled elliptic system with variable coefficients  when the nonlinear term is treated semi-implicitly.}  Furthermore, their modified energies  are unconditionally decreasing, and their numerical solutions are unconditionally bounded in $l^{\infty}(0,T;H^1(\Omega)) $. We also established rigorous error estimates up to fifth-order  for these schemes
 in $l^{\infty}(0,T;H^1(\Omega)) \bigcap l^{2}(0,T;H^2(\Omega))$. 
 
 To the best of our knowledge, these are the first higher than second-order error estimates  for schemes which enforce normalization of the magnetization. Although we only considered  semi-discrete (in time) case in this paper,  these schemes can be easily implemented  with finite difference or spectral methods, and it is expected that 
the analysis can be extended, albeit tedious, to  suitable fully discrete  approximations.
\bibliographystyle{siamplain}
\bibliography{final}

\end{document}